\documentclass[12pt]{amsart}
\pdfoutput=1
\usepackage[left=1in,right=1in,top=1in,bottom=1in]{geometry}
\usepackage[capitalise]{cleveref}

\usepackage{multirow,tikz, blkarray, mathrsfs,scalerel,stackengine,amssymb,amsthm}
\usepackage{setspace}\setdisplayskipstretch{}
\usetikzlibrary{arrows,matrix,positioning}

\stackMath
\newcommand\reallywidehat[1]{\savestack{\tmpbox}{\stretchto{\scaleto{ \scalerel*[\widthof{\ensuremath{#1}}]{\kern.1pt\mathchar"0362\kern.1pt}{\rule{0ex}{\textheight}}}{\textheight}}{2.4ex}}\stackon[-6.9pt]{#1}{\tmpbox}}

\newcommand{\Q}{\mathbb{Q}}
\newcommand{\Z}{\mathbb{Z}}

\newcommand{\K}{\mathbb{K}}

\newcommand{\Lq}{\mathbb{K}}
\newcommand{\ZZ}{\widetilde{Z}}


\newcommand{\Stab}{\operatorname{Stab}}
\newcommand{\Spann}{\text{-}\operatorname{span}}

\newcommand{\en}{e^+_{[1,n-k]}}

\newcommand{\ep}{e^+}
\newcommand{\eplusB}{e^+_{[k+1,n]}}
\newcommand{\Sn}{S_{[1,n-k]}}

\newcommand{\Stabn}{\Stab_{[1,n-k]}}

\newcommand{\Symm}[1]{\sum\limits_{w\in \Sn} T_w \left( #1 \right)}

\newcommand{\shatproduct}{s_1 \dotsm \widehat{s}_{t_1} \dotsm \widehat{s}_{t_r} \dotsm s_k}



\newcommand{\EB}[1]{E_{#1}^\dagger}
\newcommand{\EBlong}[1]{E_{#1}^\dagger(z;q^{-1},t^{-1})}
\newcommand{\EI}[1]{E_{#1}}
\newcommand{\EIlong}[1]{E_{#1}(x;q,t)}

\newcommand{\Estar}[1]{E^*_{#1}}
\newcommand{\Estarlong}[1]{E^*_{#1}(z;q,t)}
\newcommand{\PB}[1]{P_{#1}^\dagger}
\newcommand{\PBlong}[1]{P_{#1}^\dagger(z;q^{-1},t^{-1})}
\newcommand{\PP}{P_{(\lambda \mid \gamma)}}

\newcommand{\Pn}{P_{(\lambda \mid \gamma)}}

\newcommand{\JJ}{\mathcal{J}_{(\lambda \mid \gamma)}}
\newcommand{\MM}{\mathbb{M}_{\gammalambdamin}}
\newcommand{\MMM}{\mathbb{M}_{\lambdagamma}}
\newcommand{\CC}{\mathcal{A}^\lambdagamma_\mueta}


\newcommand{\munu}{{(\mu \mid \nu)}}
\newcommand{\numin}{{\nu^-}}
\newcommand{\nudom}{{( \lambda \mid \gamma )}}
\newcommand{\lambdagamma}{{\nudom}}
\newcommand{\gammalambdamin}{{(\gamma \mid \lambda^-)}}
\newcommand{\gammanu}{{(\gamma \mid \nu)}}
\newcommand{\nuantidom}{{(\lambda^- \mid \gamma)}}

\newcommand{\etamu}{{(\eta \mid \mu^-)}}
\newcommand{\mueta}{{(\mu \mid \eta)}}
\newcommand{\etamuprime}{{(\eta \mid \mu')}}
\newcommand{\mutilde}{{\widetilde{\mu}}}
\newcommand{\lambdatilde}{{\widetilde{\lambda}}}
\newcommand{\gammalambdatilde}{{(\gamma \mid \lambdatilde)}}
\newcommand{\lambdatildegamma}{{(\lambdatilde \mid \gamma)}}
\newcommand{\lambdamingamma}{(\lambda^- \mid \gamma)}
\newcommand{\mumineta}{(\mu^- \mid \eta)}
\newcommand{\flambdatilde}{f_{\lambdatilde , \lambdagamma}}
\newcommand{\etamutilde}{{(\eta \mid \widetilde{\mu})}}
\newcommand{\nugamma}{{(\nu \mid \gamma)}}
\newcommand{\mugamma}{{(\mu \mid \gamma)}}
\newcommand{\mutildeeta}{{(\mutilde \mid \eta)}}
\newcommand{\etamumin}{{(\eta \mid \mu^-)}}
\newcommand{\fmuu}{f_{\mu, \lambdagamma}}


\newcommand{\jj}{j_{(\lambda \mid \gamma)}}
\newcommand{\pIone}{p_{I_1}(\overline{z})}

\newcommand{\pIoneevaluated}{p_{I_1}\left(\overline{\etamutilde}\right)}
\newcommand{\pItwo}{p_{2}(\overline{z})}
\newcommand{\pItwoevaluated}{p_{2}\left(\overline{\etamutilde}\right)}
\newcommand{\mIone}{m_{\etamu}^{\gammalambdamin}}

\newcommand{\hIone}{h}
\newcommand{\dI}{d_I}
\newcommand{\fnu}{f_{\nu, \gammalambdamin}}
\newcommand{\fnuu}{f_{\nu,\lambdagamma}}
\newcommand{\fmu}{f_{\mu,\lambdagamma}}
\newcommand{\fsimu}{{f_{s_i(\mu),\lambdagamma}}}
\newcommand{\gmutilde}{f_{\mutilde,\etamu}}
\newcommand{\fmutilde}{f_{\mutilde, \mueta}}
\newcommand{\gmuprime}{f_{\mu',\etamu}}
\newcommand{\betaa}{{q^{\ell_{s_i(\mu)}(u)+1} t^{a_{s_i(\mu)}(u)}}}

\newcommand{\leg}[1]{{\ell_{#1}(\square)}}
\newcommand{\legnu}{{\ell_\nu(\square)}}
\newcommand{\legnumin}{{\ell_{\nu^-}(\square)}}
\newcommand{\armone}[1]{{\widetilde{a}_{#1}(\square)}}
\newcommand{\armonenu}{{\widetilde{a}_\nu(\square)}}
\newcommand{\armonenumin}{{\widetilde{a}_{\nu^-}(\square)}}
\newcommand{\armtwo}[1]{{a_{#1}(\square)}}
\newcommand{\armtwonu}{{a_\nu(\square)}}
\newcommand{\armtwonumin}{{a_{\nu^-}(\square)}}

\newcommand{\coarm}{{a'_\mu(\square)}}
\newcommand{\symmetricprod}{\prod\limits_{\square \in \lambda^-}}
\newcommand{\nonsymmetricprod}{\prod\limits_{\square \in \gamma}}

\newcommand{\group}[1]{\operatorname{Group}\,\operatorname{#1}}

\newcommand{\jone}{j_1}
\newcommand{\jtwo}{j_2}
\newcommand{\jthree}{j_3}
\newcommand{\jfour}{j_4}
\newcommand{\jfive}{j_5}
\newcommand{\Estarone}{E^*_1}

\newcommand{\Estarthree}{E^*_3}
\newcommand{\Estarfour}{E^*_4}
\newcommand{\Estarfive}{E^*_5}
\newcommand{\ltm}{\ell'_{t_u}}
\newcommand{\lj}{\ell'_j}
\newcommand{\lh}{\ell'_h}

\newcommand{\DBaratta}{D_{\etamumin}^\gammalambdamin}

\newcommand{\dtwo}{d'_I}
\newcommand{\nutilde}{\widetilde{\nu}}
\newcommand{\gammanutilde}{{(\gamma \mid \nutilde)}}
\newcommand{\pstarone}{p^*_{I'_1}}
\newcommand{\pstartwo}{p^*_2}
\newcommand{\pstarthree}{p^*_{I'_3}}

\newcommand{\comment}[1]{}

\newcommand{\dg}{\mathrm{dg}}
\newcommand{\dga}{\widehat{\dg}}

\theoremstyle{plain}
\newtheorem{thm}{Theorem}[section]
\newtheorem{cor}[thm]{Corollary}
\newtheorem{lem}[thm]{Lemma}
\newtheorem{prop}[thm]{Proposition}

\theoremstyle{definition}
\newtheorem{dfn}[thm]{Definition}
\newtheorem{ex}[thm]{Example}
\newtheorem{rem}[thm]{Remark}

\title{Type A Partially-Symmetric Macdonald Polynomials}
\author{Ben Goodberry}
\date{November 20, 2023}

\begin{document}

\begin{abstract}

We construct type A partially-symmetric Macdonald polynomials $\PP$, where $\lambda \in \mathbb{Z}_{\geq 0}^{n-k}$ is a partition and $\gamma \in \mathbb{Z}_{\geq 0}^k$ is a composition. These are polynomials which are symmetric in the first $n-k$ variables, but not necessarily in the final $k$ variables. We establish their stability and an integral form defined using Young diagram statistics. Finally, we build Pieri-type rules for degree 1 products $x_j \PP$ for $j > n-k$ and $e_1[x_1, \dotsc, x_{n-k}] \PP$, along with substantial combinatorial simplification of the $e_1$ multiplication. The $\PP$ are the same as the $m$-symmetric Macdonald polynomials defined by Lapointe in \cite{Lapointe} up to a change of variables.
\end{abstract}

\maketitle

\section{Introduction}
The type $GL_n$ nonsymmetric Macdonald polynomials can be symmetrized, for example by acting on $E_\mu$ by a symmetrization operator over the finite Weyl group, to obtain the classic symmetric Macdonald polynomials. In recent work \cite{Schlosser}, Schlösser constructs a symmetrization over a parabolic subgroup $W_J$ for nonsymmetric Macdonald polynomials in general type. This was done in type $GL_n$ by Lapointe \cite{Lapointe} for a particular choice of parabolic subgroup, with the goal of building a basis for partially-symmetric polynomials which are Schur-positive for some $m$-symmetric form of Schur polynomials. We will use the same type of construction, with the symmetric group $S_{[1,n-k]}$ acting on the first $n-k$ variables as our parabolic subgroup, which gives us polynomials that are stable when adding symmetrized variables. This stability is special to type $A$, and to this parabolic subgroup.

In order to build an integral form $\JJ$ as a normalization of $\PP$, in \cref{diagrams} we introduce Young diagram statistics associated with the split diagram $\lambdagamma$. This includes a different way to count arms depending on whether the column is in the symmetric part of the diagram $\lambda$, or the nonsymmetric part $\gamma$. These coincide with the integral form of the $m$-symmetric Macdonald polynomials in \cite{Lapointe}. We use the combinatorial formula for the nonsymmetric Macdonald polynomials found in \cite{HHL} in order to prove that $\JJ \in \mathbb{Z}[q,t][x_1, \dotsc, x_n]$ in \cref{integralformsection}. This is mentioned but not proven in \cite{Lapointe}.

The major new identities we explore that do not seem to appear elsewhere are rules for multiplication of $\PP$ by either $e_1[x_1, \dotsc, x_{n-k}]$, the elementary symmetric polynomial in all of the symmetrized variables, or by $x_j$ where $j>n-k$, a nonsymmetric variable. We derive these formulas in \cref{ElementaryMultiplication}. Our approach for finding formulas for these products, written in the basis of partially-symmetric Macdonald polynomials, uses a similar approach by Baratta in \cite{Ba} for the product $x_i E_\mu$. Namely, this makes heavy use of the interpolation nonsymmetric Macdonald polynomials of Knop \cite{KnopSymmetric} and Sahi \cite{Sahi}. Halverson and Ram \cite{Monk} find Monk type formulas for the same products, but instead using Cherednik's intertwiners. While these formulas could be symmetrized in a naive way, combining the resulting expansion to get it into the basis of partially-symmetric Macdonald polynomials is quite difficult, as many terms combine to the same $P_{\mueta}$. We get around this using properties of the Hecke operators and by evaluating the interpolation polynomial expressions at a special point which forces uniqueness and avoids any overlapping of terms.

Our primary goal is to build the polynomials which appear in \cite{CGM} as isomorphic images of fixed points of a torus action in the Parabolic Flag Hilbert scheme, which will be established in upcoming works with D. Orr and M. Bechtloff Weising. Toward this end, after finding a formula for the expansion of $e_1[x_1, \dotsc, x_{n-k}] \PP$, in \cref{Cancellation} we categorize many combinatorial simplifications that can be made to that formula.

This article includes the majority of the results submitted for a doctoral dissertation in \cite{Dissertation}, though the final simplifications and $x_j \PP$ formulas do not appear there.

\section*{Acknowledgements} I would like to thank Daniel Orr for his extensive guidance over the course of this project. I would also like to thank Luc Lapointe for helpful discussion on the $m$-symmetric Macdonald polynomials.

\section{Partially symmetric Macdonald polynomials}

\subsection{Nonsymmetric Macdonald polynomials}

Let $\K=\Q(q,t)$ and $n>0$. For any composition $\nu\in(\Z_{\ge 0})^n$, denote by $E_\nu =E_\nu(x;q,t)\in \K[x_1,\dotsc,x_n]$ the nonsymmetric Macdonald polynomial of type $GL_n$ as defined in  \cite{HHL}. These satisfy the triangularity property,
\begin{equation}\label{Etriangularity} E_\mu \in x^\mu + \sum_{\nu <_{\text{(Bru)}} \mu} c_\nu x^\nu,\end{equation}
where $x^\mu = x_1^{\mu_1} \dotsm x_n^{\mu_n}$ and $<_{\text{(Bru)}}$ is the Bruhat order. In this ordering, we consider a weakly-decreasing weight to be dominant, and a weakly-increasing weight antidominant.

The symmetric group $S_n$ acts naturally on $\Z^n$ and $\K[x_1,\dotsc,x_n]$. For $1\le i<n$, let $s_i$ be the simple transposition $(i,i+1)\in S_n$. Define the Demazure-Lusztig operators,
\begin{align*}
T_i = ts_i + \frac{(t-1)x_{i+1}}{x_{i+1}-x_i}(1-s_i), \qquad 1\le i<n.
\end{align*}
These satisfy the braid relations,
\begin{equation}\label{braidrelations}T_i T_{i+1} T_i = T_{i+1} T_i T_{i+1}, \qquad 1\leq i < n\end{equation}
\begin{equation} T_i T_j = T_j T_i \qquad \text{ if } |i-j|\geq 2\end{equation}
and the quadratic relations,
\begin{equation}\label{quadratic}
(T_i-t)(T_i+1)=0.
\end{equation}
For any reduced expression $w=s_{i_1}\dotsm s_{i_\ell}\in S_n$, we define the operator $T_w=T_{i_1}\dotsm T_{i_\ell}$. 

\subsection{Partially symmetric Macdonald polynomials}
Suppose $n>0$ and $k\geq 0$. We can regard $\Sn$ as the subgroup of $S_n$ fixing the elements $\{n-k+1,\dotsc,n\}$. Define the partial Hecke symmetrizer
$$
\en=\sum_{w\in \Sn} T_w.
$$
For $\lambda\in(\Z_{\ge 0})^{n-k}$, let $(\Sn)_\lambda\subseteq \Sn$ be its stabilizer and
\begin{align*}
W_\lambda(t)=\sum_{w\in (\Sn)_\lambda} t^{\ell(w)}.
\end{align*}

We write $\nu = (\lambda \mid \gamma)$ to indicate a splitting of a composition $\nu$ into two compositions, $\lambda$ and $\gamma$, i.e., $\nu=(\nu_1,\dotsc,\nu_n)=(\lambda_1,\dotsc,\lambda_{n-k},\gamma_1,\dotsc,\gamma_k)$ where $\lambda=(\lambda_1,\dotsc,\lambda_{n-k})\in(\Z_{\ge 0})^{n-k}$ and $\gamma=(\gamma_1,\dotsc,\gamma_k)\in(\Z_{\ge 0})^k$.

\begin{dfn}
For a weakly-decreasing partition $\lambda \in (\mathbb{Z}_{\geq 0})^{n-k}$ and a composition $\gamma\in (\Z_{\ge 0})^k$, the partially symmetric Macdonald polynomial $P_{(\lambda|\gamma)}=P_{(\lambda|\gamma)}(x;q,t)$ is defined by
\[
P_{(\lambda|\gamma)} = \frac{\en  E_{(\lambda|\gamma)}}{W_\lambda(t)}.
\]
\end{dfn}

\subsection{Diagrams and their statistics}\label{diagrams}
In constructions related to compositions and their diagrams, we mostly follow the conventions of \cite{HHL}. Compositions are tuples $\nu=(\nu_1,\dotsc,\nu_n)\in(\Z_{\ge 0})^n$. A composition $\nu$ is a partition if its entries are weakly decreasing. 

The diagram of a composition $\nu\in(\Z_{\ge 0})^n$ is the subset $\dg(\nu)\subset (\Z_{\ge 0})^2$ given by
$$ \dg(\nu)=\{(i,j) \mid 1\le i\le n, 1\le j\le\nu_i\}. $$
We view the parts $\nu_i$ of $\nu$ as columns in $\dg(\nu)$. Elements of $\dg(\nu)$ are called boxes of $\nu$. We also define the following subsets of $\dg(\nu)$:
\begin{align*} 
\dg_r(\nu) &= \{(i,j) \in \nu \mid j=r \} \\
\dg_{>r}(\nu) &= \{(i,j) \in \nu \mid j>r\} \\
\dg_{top}(\nu) &= \{(i,\nu_i) \mid 1 \leq i \leq n\}.
\end{align*}

The standard leg and arm lengths of a box $\square=(i,j)\in\nu$ are defined by
\begin{align*}
\leg{\nu} &= \nu_i -j,\\
\armtwonu &= \# \{ 1 \leq r < i \mid j\leq \nu_r \leq \nu_i\} + \# \{ i<r \leq n \mid j-1\leq \nu_r < \nu_i\}.
\end{align*}
We will also use the following alternate versions of arm length (see Example~\ref{DiagramExample}):
\begin{align*}
\armonenu &= \# \{1\leq r < i \mid j \leq \nu_r \leq \nu_i \} + \# \{i<r \leq n \mid j \leq \nu_r < \nu_i \}, \\
\coarm &= \# \{1\leq r<i \mid \nu_i < \nu_r \} + \# \{i<r \leq n \mid \nu_i \leq \nu_r \}.\end{align*}
The quantity $\coarm$ will be called the coarm of the box.
Starting with $\nu=(\lambda \mid \gamma)$ as above and $\lambda$ a partition, we construct an augmented diagram $\dga(\nu)$ as follows. First, we form $\nu^- := (\lambda^- \mid \gamma)$, where $\lambda^-$ is the weakly increasing rearrangement of $\lambda$. The augmented diagram associated with $\nu$ is then defined as follows
\[ \dga(\nu)=\dg(\nu^-)\cup \{(n-k+1,0),\dotsc,(n,0)\}. \]

We call the subsets of $\dg(\nu^-)$ corresponding to $\lambda^-$ and $\gamma$ the symmetric and nonsymmetric parts of the diagram, respectively. We will use different arm functions for boxes in these two parts of the diagram- for boxes in the symmetric part of $\dg(\nu^-)$, we will use $\widetilde{a}_{\nu^-}$, and for boxes in the nonsymmetric part, we will use the arm function $a_{\nu^-}$. These arm functions have interpretations as counting certain boxes in $\dga(\nu^-)$, as we illustrate in the example below.

\begin{ex}\label{DiagramExample} For $\nu=(3,1  \mid  2,1,3,0,1)$, the following diagrams illustrate the boxes counted as arms and legs for the nonsymmetric box $u=(3,1)$ and symmetric box $v=(2,1)$ respectively.

\begin{center}
\begin{tikzpicture}[scale=.7]
\draw (0,0) -- (0,1) -- (5,1) -- (5,0) -- (0,0);
\draw (1,0) -- (1,3) -- (2,3) -- (2,0);
\draw (1,2) -- (3,2) -- (3,0);
\draw (4,0) -- (4,3) -- (5,3) -- (5,1);
\draw (4,2) -- (5,2);
\draw (6,0) -- (6,2) -- (7,2) -- (7,0) -- (6,0);
\draw (6,1) -- (7,1);
\draw[dashed] (2,0) -- (2,-1) -- (7,-1) -- (7,0) -- (2,0);
\draw[dashed] (3,0) -- (3,-1);
\draw[dashed] (4,0) -- (4,-1);
\draw[dashed] (5,0) -- (5,-1);
\draw[dashed] (6,0) -- (6,-1);
\node at (.5,.5) {\large $ a$};
\node at (2.5,.5) {\large $ u$};
\node at (2.5,1.5) {\large $ \ell$};
\node at (3.5,-.5) {\large $ a$};
\node at (5.5,-.5) {\large $ a$};
\node at (6.5,-.5) {\large $ $};
\end{tikzpicture}
\hspace*{3cm}
\begin{tikzpicture}[scale=.7]
\draw (0,0) -- (0,1) -- (5,1) -- (5,0) -- (0,0);
\draw (1,0) -- (1,3) -- (2,3) -- (2,0);
\draw (1,2) -- (3,2) -- (3,0);
\draw (4,0) -- (4,3) -- (5,3) -- (5,1);
\draw (4,2) -- (5,2);
\draw (6,0) -- (6,2) -- (7,2) -- (7,0) -- (6,0);
\draw (6,1) -- (7,1);
\draw[dashed] (2,0) -- (2,-1) -- (7,-1) -- (7,0) -- (2,0);
\draw[dashed] (3,0) -- (3,-1);
\draw[dashed] (4,0) -- (4,-1);
\draw[dashed] (5,0) -- (5,-1);
\draw[dashed] (6,0) -- (6,-1);
\node at (1.5,.5) {\large $v$};
\node at (1.5,1.5) {\large $\ell$};
\node at (1.5,2.5) {\large $\ell$};
\node at (.5,.5) {\large $\widetilde{a}$};
\node at (2.5,.5) {\large $\widetilde{a}$};
\node at (3.5,.5) {\large $\widetilde{a}$};
\node at (6.5,.5) {\large $\widetilde{a}$};
\end{tikzpicture}
\end{center}

\end{ex}

It will prove useful to have formulas for the action of $T_i$ on $E_\mu$, which is most simply expressed using diagram statistics.

\begin{lem}\label{TiFormulas} We have the following formulas for the action of $T_i$ on the nonsymmetric Macdonald polynomials.
\begin{itemize} \item\hspace*{0mm}{\cite[(17)]{HHL}} If $\mu_{i}>\mu_{i+1}$ for some $1\le i<n$, then
\begin{align}
\label{E:T-on-E-up}
T_i E_\mu = E_{s_i(\mu)}-\frac{1-t}{1-q^{\ell_\mu(\square)+1}t^{a_\mu(\square)}}E_\mu
\end{align}
where $\square=(i,\mu_{i+1}+1)\in\dg(\mu)$.\\

\item\hspace*{0mm}[Proof in Appendices] If $s_i(\mu)<\mu$, and $u=(i,\mu_{i}+1)$, then
\begin{equation} \label{decreasingT} T_iE_\mu = \frac{q^{\ell(u)+1}t^{a(u)}(1-t)}{1-q^{\ell(u)+1}t^{a(u)}}E_{\mu} - \frac{(t-q^{\ell(u)+1}t^{a(u)})(1-q^{\ell(u)+1}t^{a(u)+1})}{(1-q^{\ell(u)+1}t^{a(u)})^2} E_{s_i(\mu)},\end{equation}
where all arms and legs are in terms of the diagram of $s_i(\mu)$.\\

\item If $\mu_i = \mu_{i+1}$, then
\begin{equation}\label{simpledz} T_i E_\mu = t E_\mu.\end{equation}

\item Applying the above formulas, for any $w\in \Sn$,
\begin{equation}\label{span}
    T_w(E_\mu) \in \Lq \Spann \{ E_{w'(\mu)} \}_{w' \in \Sn}.
\end{equation}

\end{itemize}
\end{lem}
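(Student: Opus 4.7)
The plan is to derive the three unproved parts from (\ref{E:T-on-E-up}) together with the Hecke algebra relations, using algebraic manipulation for (\ref{decreasingT}), a symmetry argument for (\ref{simpledz}), and a straightforward induction for (\ref{span}).

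For (\ref{decreasingT}), the hypothesis $s_i(\mu) < \mu$ ensures that $s_i(\mu)$ satisfies the descent condition required to invoke (\ref{E:T-on-E-up}) with $s_i(\mu)$ playing the role of $\mu$, giving
$$T_i E_{s_i(\mu)} = E_\mu - \frac{1-t}{1-\chi} E_{s_i(\mu)},$$
where $\chi = q^{\ell(u)+1} t^{a(u)}$ and all statistics are computed in $\dg(s_i(\mu))$. Solving this identity for $E_\mu$, applying $T_i$ to both sides, and using the quadratic relation (\ref{quadratic}) to replace $T_i^2$ yields a linear equation involving $T_i E_\mu$, $T_i E_{s_i(\mu)}$, and $E_{s_i(\mu)}$. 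Substituting $T_i E_{s_i(\mu)}$ back from the original identity then expresses $T_i E_\mu$ as a $\K$-linear combination of $E_\mu$ and $E_{s_i(\mu)}$. The main technical step, and the place where I expect most of the effort, is simplifying the coefficient of $E_{s_i(\mu)}$: its numerator $t(1-\chi)^2 - \chi(1-t)^2$ needs to be recognized as $(t-\chi)(1-\chi t)$, and the sign bookkeeping against the quadratic relation must be tracked carefully in order to match the formula as stated.

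For (\ref{simpledz}), when $\mu_i = \mu_{i+1}$ a standard argument from Cherednik theory (or equivalently a direct check using the HHL combinatorial formula, exploiting the invariance under swapping two columns of equal height) gives $s_i E_\mu = E_\mu$; the resulting $s_i$-symmetry kills the $(1-s_i)$ summand in the definition of $T_i$, leaving $T_i E_\mu = t\, s_i E_\mu = t E_\mu$. Finally, (\ref{span}) follows by induction on $\ell(w)$ for $w \in \Sn$: writing $w = s_i w'$ with $\ell(w) = \ell(w') + 1$ and $i \in [1, n-k-1]$, the inductive hypothesis expresses $T_{w'} E_\mu$ as a $\K$-linear combination of $E_{w''(\mu)}$ with $w'' \in \Sn$, and each of (\ref{E:T-on-E-up}), (\ref{decreasingT}), (\ref{simpledz}) shows that $T_i$ sends $E_{w''(\mu)}$ into the span of $E_{w''(\mu)}$ and $E_{s_i w''(\mu)}$; since $s_i \in \Sn$ and $w'' \in \Sn$ imply $s_i w'' \in \Sn$, the inductive step closes.
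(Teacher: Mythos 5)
Your derivation of \cref{decreasingT} is essentially identical to the paper's appendix proof: both apply \cref{E:T-on-E-up} to $s_i(\mu)$ and then invoke the quadratic relation (the paper phrases it as $T_i^{-1}=\tfrac{1}{t}(T_i+1-t)$ rather than $T_i^2=(t-1)T_i+t$), and your factorization $t(1-\chi)^2-\chi(1-t)^2=(t-\chi)(1-\chi t)$ is exactly the simplification that appears there; be aware that this computation produces a $+$ sign on the $E_{s_i(\mu)}$ term, which is how the identity is actually used in the proof of \cref{Pexpansion}, so the minus sign in the statement of \cref{decreasingT} is a typo that your sign bookkeeping should not try to reproduce. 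Your arguments for \cref{simpledz} (via $s_i E_\mu=E_\mu$ when $\mu_i=\mu_{i+1}$) and for \cref{span} (induction on $\ell(w)$ using the three displayed formulas) are the standard ones the paper takes for granted.
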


\section{Properties of Partially-Symmetric Macdonald Polynomials}
\subsection{Stability}
The polynomials $\PP$ exhibit stability in the following sense. Considering the projection of polynomials in $\mathbb{K}[x_1,\dotsc, x_n]$,
\[ \pi_1(x_1^{\lambda_1} \dotsm x_n^{\lambda_n}) := \begin{cases} x_1^{\lambda_2} \dotsm x_{n-1}^{\lambda_n} &\text{ if } \lambda_1=0\\
0 &\text{ if } \lambda_1 > 0\end{cases}. \]

This is equivalent to setting $x_1 \mapsto 0$ and shifting the remaining indices down by 1.

\begin{prop} If $\lambda \in \Z_{\geq 0}^{n-k}$ is an $S_{[1,n-k]}$-dominant weight and $\gamma \in \Z_{\geq 0}^k$, then
\begin{align}\label{stability}\pi_1 P_{(\lambda,0\mid\gamma)} =  P_{(\lambda\mid\gamma)}.\end{align}\end{prop}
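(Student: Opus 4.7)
The approach combines the first-variable stability of nonsymmetric Macdonald polynomials, $\pi_1 E_{(0,\mu)} = E_\mu$ (which follows from the HHL combinatorial formula, since prepending a leading zero to a composition preserves the arm and leg statistics of all remaining boxes), with a coset decomposition of the partial Hecke symmetrizer that isolates the $x_1$-dependence.

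First, rewrite $P_{(\lambda, 0 | \gamma)}$ using $E_{(0, \lambda | \gamma)}$ in place of $E_{(\lambda, 0 | \gamma)}$. Iterating \cref{TiFormulas} (applying \eqref{E:T-on-E-up} at each strictly-decreasing step as we slide the $0$ from position $n-k+1$ down to position $1$, and \eqref{simpledz} at any steps where $\lambda$ has trailing zeros) yields
$$E_{(0, \lambda | \gamma)} = (T_1 + c_{n-k})(T_2 + c_{n-k-1}) \dotsm (T_{n-k} + c_1)\, E_{(\lambda, 0 | \gamma)}$$
for explicit scalars $c_j \in \K$ read off from the diagrams of the intermediate compositions. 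Combining this with the Hecke-algebraic identity $e^+_{[1, n-k+1]} T_i = t \cdot e^+_{[1, n-k+1]}$ (a direct consequence of the quadratic and braid relations) produces a scalar $\alpha = \prod_j (t + c_j)$ such that $e^+_{[1, n-k+1]} E_{(\lambda, 0 | \gamma)} = \alpha^{-1}\, e^+_{[1, n-k+1]} E_{(0, \lambda | \gamma)}$, allowing $P_{(\lambda, 0 | \gamma)}$ to be expressed in terms of $E_{(0, \lambda | \gamma)}$.

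Next use the right-coset decomposition
$$e^+_{[1, n-k+1]} = e^+_{[2, n-k+1]} \bigl(1 + T_1 + T_1 T_2 + \dotsb + T_1 T_2 \dotsm T_{n-k}\bigr),$$
arising from the minimal-length right coset representatives of $S_{[2, n-k+1]}$ in $S_{[1, n-k+1]}$. The advantage is that $e^+_{[2, n-k+1]}$ involves only the operators $T_j$ with $j \ge 2$, none of which depend on $x_1$; these commute with $\pi_1$ modulo the index shift, giving $\pi_1 \circ e^+_{[2, n-k+1]} = e^+_{[1, n-k]} \circ \pi_1$ as operators. Applying $\pi_1$ to $e^+_{[1, n-k+1]} E_{(0, \lambda | \gamma)}$ therefore reduces to computing $\pi_1$ of each summand $T_1 T_2 \dotsm T_j E_{(0, \lambda | \gamma)}$, using the shift identity for the inner factors and the explicit Demazure-Lusztig calculation $\pi_1 T_1 f = \pi_1(s_1 f) + (t-1)\pi_1 f$ for the outer $T_1$. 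The first-variable stability $\pi_1 E_{(0,\lambda|\gamma)} = E_{(\lambda|\gamma)}$ then identifies each resulting contribution with a specific multiple of $e^+_{[1, n-k]} E_{(\lambda | \gamma)}$.

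The main obstacle will be the final coefficient reckoning: the scalars $\alpha^{-1}$ from the rewrite, the contributions from each summand of the coset sum under $\pi_1$, and the Poincaré polynomial factor $W_{(\lambda, 0)}(t)^{-1}$ must combine to precisely $W_\lambda(t)^{-1}$, yielding $P_{(\lambda | \gamma)}$. This is particularly delicate when $\lambda$ has $r$ trailing zeros, since then $W_{(\lambda, 0)}(t) = (1 + t + \dotsb + t^r)\, W_\lambda(t)$ strictly exceeds $W_\lambda(t)$, and the extra factor must arise exactly from the $c_j$ produced at the \eqref{simpledz}-type steps of the iteration.
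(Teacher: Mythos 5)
The paper itself offers no internal proof of this proposition (it defers to Lapointe and to the dissertation), so your argument has to stand on its own; as written it is a plan rather than a proof, and the part you postpone is precisely the part that has to be done. The scaffolding is mostly sound: iterating \eqref{E:T-on-E-up} does give $E_{(0,\lambda\mid\gamma)}=(T_1+c_{n-k})\dotsm(T_{n-k}+c_1)E_{(\lambda,0\mid\gamma)}$, the absorption $e^+_{[1,n-k+1]}T_i=t\,e^+_{[1,n-k+1]}$ (peeling the factors off from the left) produces your scalar $\alpha$, the right-coset factorization $e^+_{[1,n-k+1]}=e^+_{[2,n-k+1]}\bigl(1+T_1+T_1T_2+\dotsb+T_1\dotsm T_{n-k}\bigr)$ is correct, and so are the intertwining relations $\pi_1T_j=T_{j-1}\pi_1$ for $j\ge 2$ and $\pi_1T_1=\pi_1 s_1+(t-1)\pi_1$.

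There are, however, two genuine gaps. First, the input $\pi_1E_{(0,\mu)}=E_\mu$ is true but not proved by the remark you give: preservation of arms and legs under prepending an empty column is only one ingredient of the HHL formula; you also need that every nonattacking filling using label $1$ is killed by $x_1\mapsto 0$ (easy), that the fillings avoiding label $1$ biject with nonattacking fillings of $\mu$, and that $maj$ and $coinv$ are unchanged when the empty column and the extra basement box are deleted, or else a citation. Second, and fatally for the argument as it stands, the evaluation of $\pi_1\bigl(T_1T_2\dotsm T_jE_{(0,\lambda\mid\gamma)}\bigr)$ is never carried out and your proposed mechanism does not suffice: the inner factors $T_2\dotsm T_j$ scatter $E_{(0,\lambda\mid\gamma)}$ over many $E_{(w(0,\lambda)\mid\gamma)}$ by \cref{TiFormulas}, and the outer identity produces $\pi_1(s_1\,\cdot)$, i.e.\ a specialization setting the \emph{second} variable to zero, which is not controlled by the quoted stability. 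Hence the claim that each summand contributes a specific multiple of $e^+_{[1,n-k]}E_{(\lambda\mid\gamma)}$ is unsubstantiated, and the final reconciliation of $\alpha^{-1}$, the summand contributions, and $W_{(\lambda,0)}(t)^{-1}$ into $W_\lambda(t)^{-1}$ (including the trailing-zero factor $1+t+\dotsb+t^r$) is exactly what you acknowledge is missing. A shorter route inside the paper's own toolkit: expand $P_{(\lambda,0\mid\gamma)}$ in the nonsymmetric basis via \cref{Pexpansion} and \eqref{closedform}, note that $\pi_1E_{(\mu\mid\gamma)}=0$ whenever $\mu_1>0$ (the nonattacking condition forces the box $(1,1)$ to carry label $1$), use $\pi_1E_{(0,\nu\mid\gamma)}=E_{(\nu\mid\gamma)}$, and match the coefficients $f_{(0,\nu),(\lambda,0\mid\gamma)}=f_{\nu,(\lambda\mid\gamma)}$ directly from the closed formula, since prepending an empty column changes no arm, leg, or minimal length; this avoids the coset-sum bookkeeping altogether.
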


This is proved in \cite{Lapointe} for the $m$-symmetric Macdonald Polynomials. An explicit computation in our notation can be found in \cite{Dissertation}.

\begin{rem}
    The partially-symmetric Macdonald polynomials form a basis for partially-symmetric polynomials in $\Lq[x_1, \dotsc, x_n]^{\Sn}$, which is a direct result of the fact that the $E_\mu$ form a basis for $\Lq[x_1,\dotsc,x_n]$.
\end{rem}

\subsection{Integral Form}\label{integralformsection}
The integral form partially symmetric Macdonald polynomial $J_{(\lambda|\gamma)}=J_{(\lambda|\gamma)}(x;q,t)$ is the scalar multiple
\[
J_{(\lambda|\gamma)}= j_{(\lambda|\gamma)}P_{(\lambda|\gamma)}
\]
where
\[
\jj := \symmetricprod (1-q^{\ell_{\nu^-}(\square)}t^{\tilde{a}_{\nu^-}(\square) + 1}) \nonsymmetricprod(1-q^{\ell_{\nu^-}(\square) + 1}t^{a_{\nu^-}(\square) + 1}).
\]

In the definition of $j_{(\lambda|\gamma)}$, the products are taken over the symmetric and nonsymmetric parts of $\dg(\nu^-)$, respectively. That is, $\displaystyle{\symmetricprod}=\displaystyle{\prod_{\substack{(i,j) \in \nu^-\\ 1\leq i \leq n-k}}}$ and $\displaystyle{\nonsymmetricprod}=\displaystyle{\prod_{\substack{(i,j) \in \nu^-\\ n-k+1 \leq i \leq n}}}$.

\begin{rem}
    The integral form normalization agrees with the one in \cite{Lapointe}.
\end{rem}

Before proving integrality of $\JJ$, we will break it down to pieces which we will prove individually. Fist, converting to the integral form of the nonsymmetric Macdonald polynomials found in \cite[Eqn. (6.11)]{KnopComposition}, denoted $\mathcal{E}_\nu$, and reorganizing the denominator, we get:

\begin{align*}\JJ &= \frac{\symmetricprod (1-q^{\legnumin}t^{\armonenumin + 1}) \nonsymmetricprod(1-q^{\legnumin + 1}t^{\armtwonumin + 1})}{  W^{n-k}_\lambda(t)} \frac{\Symm{\mathcal{E}_\nudom}}{\prod\limits_{\square \in \nu} \left(1-q^{\legnu+1}t^{\armtwonu +1} \right)} \\
&= \frac{\symmetricprod (1-q^{\legnumin}t^{\armonenumin + 1}) }{  \prod\limits_{\square \in d_{>1}(\lambda)} \left(1-q^{\legnu+1}t^{\armtwonu +1} \right)  } \cdot \frac{\nonsymmetricprod(1-q^{\legnumin + 1}t^{\armtwonumin + 1})}{\prod\limits_{\square \in \gamma} \left(1-q^{\legnu+1}t^{\armtwonu +1} \right)}   \\
&\hspace*{3cm} \cdot  \frac{\Symm{\mathcal{E}_{(\lambda \mid \gamma)}}}{W^{n-k}_\lambda(t)\prod\limits_{\square \in d_1(\lambda)} \left(1-q^{\legnu+1}t^{\armtwonu +1} \right)} \end{align*}

The middle products cancel immediately, since the ordering of the columns of $\lambda$ does not change the arm length of a box in $\gamma$. So this can be simplified to
\begin{equation}\label{simplifiedJ} \JJ = \frac{\symmetricprod (1-q^{\legnumin}t^{\armonenumin + 1}) }{  \prod\limits_{\square \in d_{>1}(\lambda)} \left(1-q^{\legnu+1}t^{\armtwonu +1} \right)  } \cdot  \frac{\Symm{\mathcal{E}_{(\lambda \mid \gamma)}}}{W^{n-k}_\lambda(t)\cdot \prod\limits_{\square \in d_1(\lambda)} \left(1-q^{\legnu+1}t^{\armtwonu +1} \right)}. \end{equation}

Then to complete the proof that $\JJ$ is integral, we will show that
\begin{align}\label{HHLpiece} \frac{\mathcal{E}_\nu}{\prod\limits_{ \square \in d_1(\lambda) }(1-q^{\legnu+1}t^{\armtwonu +1})} \in \Z[q,t][x_1, \dotsc, x_n],\end{align}
and also,
\begin{align}\label{coefficientpiece} \frac{\symmetricprod (1-q^{\legnumin}t^{\armonenumin + 1}) }{  \prod\limits_{\square \in d_{>1}(\lambda)} \left(1-q^{\legnu+1}t^{\armtwonu +1} \right)  } = \prod\limits_{\square \in d_{top}(\lambda^-)} (1-q^\legnumin t^{\armonenumin + 1}). \end{align}

We will make use of the combinatorial formula for the nonsymmetric Macdonald polynomials from \cite{HHL} in order to prove \cref{HHLpiece}.
\begin{lem}\label{redundancy} For any $\nu = \nudom$ where $\lambda$ has weakly decreasing entries 
\[ \frac{\mathcal{E}_\nu}{\prod\limits_{ \square \in d_1(\lambda) }(1-q^{\legnu+1}t^{\armtwonu +1})} \in \Z[q,t][x_1, \dotsc, x_n]. \]
\end{lem}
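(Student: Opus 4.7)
The plan is to apply the HHL combinatorial formula to $\mathcal{E}_\nu$ and show that the denominator divides every individual term of the resulting expansion.

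Starting from the HHL combinatorial expansion of $E_\nu$ and multiplying through by the normalization $\prod_{u\in\dg(\nu)}(1-q^{\ell_\nu(u)+1}t^{a_\nu(u)+1})$, one obtains
\[
\mathcal{E}_\nu = \sum_\sigma x^\sigma\, q^{\mathrm{maj}(\sigma)}\, t^{\mathrm{coinv}(\sigma)}\, (1-t)^{\#\{u:\sigma(u)\ne\sigma(d(u))\}} \prod_{\substack{u\in\dg(\nu)\\ \sigma(u)=\sigma(d(u))}} (1-q^{\ell_\nu(u)+1}t^{a_\nu(u)+1}),
\]
where the sum is over non-attacking fillings $\sigma$ of the augmented diagram $\dga(\nu)$ with basement $\sigma(i,0)=i$, and $d(u)$ denotes the box immediately below $u$.

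The key claim is that, for every non-attacking $\sigma$ and every $u_0=(i_0,1)\in d_1(\lambda)$, one has $\sigma(u_0)=i_0$. This forces $\sigma(u_0)=\sigma(d(u_0))$, so the factor $(1-q^{\ell_\nu(u_0)+1}t^{a_\nu(u_0)+1})$ appears in that term's product. To prove the claim I use two features of the HHL non-attacking rule: cells in the same row must carry distinct values, and if two cells in consecutive rows share a value then the upper one cannot lie strictly to the left of the lower one. Applied to a cell $(i,1)$ in row $1$ and the basement cells $(k,0)$ with $\sigma(k,0)=k$, the second property forces $\sigma(i,1)\le i$. Because $\lambda$ is weakly decreasing with nonnegative entries, its positive parts occupy an initial segment, so $d_1(\lambda)=\{(1,1),\dotsc,(s,1)\}$ where $s$ is the number of positive parts of $\lambda$. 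An induction on $i$ from $1$ to $s$, combining $\sigma(i,1)\le i$ with in-row distinctness, then forces $\sigma(i,1)=i$ throughout $d_1(\lambda)$.

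With the claim in hand, the common factor $\prod_{u_0\in d_1(\lambda)}(1-q^{\ell_\nu(u_0)+1}t^{a_\nu(u_0)+1})$ can be pulled out of the entire sum, leaving a sum whose coefficients lie in $\Z[q,t]$; hence the quotient is a polynomial in $\Z[q,t][x_1,\dotsc,x_n]$, as required. The main obstacle I anticipate is formulating the HHL attacking rule precisely enough in the paper's conventions to justify $\sigma(i,1)\le i$; this step crucially uses that the basement is filled left-to-right by $1,\dotsc,n$ and that $\lambda$ is weakly decreasing, without which the row-$1$ induction collapses.
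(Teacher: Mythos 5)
Your proposal is correct and follows essentially the same route as the paper's own proof: both invoke the HHL combinatorial formula and show that the nonattacking condition, together with the basement filling $\widehat\sigma(i,0)=i$ and $\lambda$ weakly decreasing, forces $\sigma(i,1)=i$ on every box of $d_1(\lambda)$, so those boxes never produce denominators and the corresponding factors can be pulled out of every term. The induction you outline (using $\sigma(i,1)\le i$ from attacking basement cells to the right plus in-row distinctness) is exactly the argument in the paper.
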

\begin{proof}
First, we will convert $\mathcal{E}_\nu$ back into its monic form $E_\nu$,
\[ \frac{\mathcal{E}_\nu}{\prod\limits_{ \square \in d_1(\lambda) }(1-q^{\legnu+1}t^{\armtwonu +1})} = \prod\limits_{\square \in d_{>1}(\lambda) }(1-q^{\legnu+1}t^{\armtwonu + 1}) \nonsymmetricprod(1-q^{\legnu+1}t^{\armtwonu + 1}) E_\nu.\]
Consider the combinatorial formula for $E_\nu$ given in ~\cite{HHL},
\[E_\nu = \sum\limits_{\substack{\sigma :\, \nu \to [n] \\ \text{nonattacking}}} x^\sigma q^{maj(\widehat{\sigma})}t^{coinv(\widehat{\sigma})} \prod\limits_{\substack{\square \in  dg(\nu) \\ \widehat{\sigma}(\square) \neq \widehat{\sigma}(d(\square))}} \frac{1-t}{1-q^{\legnu+1} t^{\armtwonu+1}}.\]

The following definitions are taken directly from \cite{HHL}. A filling $\sigma$ of the diagram $\nu$ is a function assigning a number $i\in \{1, \dotsc, n \} $ to each box in the diagram, which we will call the box's label.\\

Two boxes $(i,j)$ and $(i',j')$ are said to be attacking if either,
\begin{itemize}
    \item The boxes are in the same row, i.e. $j = j'$, or
    \item The boxes are in adjacent rows and different columns, and the box in the lower row is to the right of the box in the higher row, i.e. the boxes are of the form $(i,j)$ and $(i-1,j')$ where $j' > j$.
\end{itemize}
In this formula, $dg(\nu)$ is the diagram of $\nu$ with no basement row. The term $d(\square)$ is the box immediately below $\square$, so the product is over every box in the diagram $dg(\nu)$ which does not have a box below it with the same label. Then for a filling $\sigma$, the augmented filling $\widehat{\sigma}$ is a filling of the diagram $dg(\nu) \cup \{ (i,0) \mid 1\leq i \leq n\}$ where $\widehat{\sigma}(i,0)=i$.

A filling $\widehat{\sigma}$ of the augmented diagram is attacking if there is a pair of boxes in the augmented diagram which are attacking and which have the same label. And $\sigma$ is said to be a nonattacking filling of $\nu$ if the corresponding filling $\widehat{\sigma}$ of the augmented diagram is nonattacking. This means for $\sigma$ to be nonattacking, every pair of attacking boxes in the augmented diagram must have distinct values.

Now suppose $\sigma$ is a nonattacking filling of $\nu$. Since $\lambda$ is decreasing, every column $i$ in the $\lambda$ part of the augmented diagram where $\lambda_i \neq 0$ has boxes in the basement row and row 1. By assumption, each box $(i,0)$ contains the value $i$. So if we start filling in the boxes $(j,1)$ from left to right, the box $(1,1)$ is attacking every box $(j,0)$ where $j>1$, all the boxes to the right of column 1 in row 0. Continuing to the right, the box $(i,1)$ is attacking the boxes $(k,1)$ where $k<i$, and the boxes $(j,0)$ where $j>i$. Inductively, the boxes $(k,1)$ for $k<i$ are labeled $k$, and the boxes $(j,0)$ for $j>i$ are labeled $j$. So for the box to be nonattacking, $(i,1)$ must have label $i$. Then altogether, $(i,1)$ has label $i$ where $\lambda_i \neq 0$.

Finally, since $\sigma$ is nonattacking and must have the filling described above, for any $\square = (i,1)$ where $1\leq i \leq n-k$, we have $\widehat{\sigma}((i,j))=\widehat{\sigma}((i,j-1))=i$. So no box in the first row of $\lambda$ contributes to the product $\displaystyle{\prod\limits_{\substack{\square \in  dg(\nu) \\ \widehat{\sigma}(\square) \neq \widehat{\sigma}(d(\square))}}} \frac{1-t}{1-q^{\legnu+1} t^{\armtwonu(\square)+1}}$. That means that the product, \[ \prod\limits_{\substack{\square \in \nu, \\ \square \notin d_1(\lambda) }}(1-q^{\legnu+1}t^{\armtwonu + 1}) = \prod\limits_{\square \in d_{>1}(\lambda) }(1-q^{\legnu+1}t^{\armtwonu + 1}) \nonsymmetricprod(1-q^{\legnu+1}t^{\armtwonu + 1}),\] is sufficient to cancel all the denominators in the combinatorial formula for $E_\numin$. Therefore for this normalized form of the nonsymmetric Macdonald polynomials, \[\prod\limits_{\substack{\square \in \numin, \\ \square \notin d_1(\lambda^-) }}(1-q^{\legnumin+1}t^{\armtwonumin + 1}) E_\numin \in \mathbb{Z}[z,t][x_1,\dotsc,x_n].\qedhere\]
\end{proof}
\begin{lem}\label{lambdalemma} If $\lambda$ is weakly decreasing,

\begin{equation}\label{lambdaequality}\frac{\symmetricprod (1-q^{\legnumin}t^{\armonenumin + 1}) }{  \prod\limits_{\square \in d_{>1}(\lambda)} \left(1-q^{\legnu+1}t^{\armtwonu +1} \right)  } = \prod\limits_{\square \in d_{top}(\lambda^-)} (1-q^\legnumin t^{\armonenumin + 1}).\end{equation}\end{lem}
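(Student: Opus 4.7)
The plan is to interpret both sides as products indexed by sets of boxes and exhibit a bijection between $\lambda^-\setminus d_{top}(\lambda^-)$ and $d_{>1}(\lambda)$ under which the corresponding factors coincide. The matched factors then cancel the numerator against the denominator on the left-hand side, leaving precisely the product over $d_{top}(\lambda^-)$ on the right.

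Since $\lambda^-$ is the weakly increasing rearrangement of the weakly decreasing $\lambda$, there is a canonical column bijection $c\colon [1,n-k]\to[1,n-k]$ satisfying $\lambda_{c(i)}=\lambda^-_i$ and preserving the relative order of columns of equal height. I will send each box $(i,j)\in\lambda^-\setminus d_{top}(\lambda^-)$, with $1\le j<\lambda^-_i$, to the box $(c(i),j+1)\in d_{>1}(\lambda)$. This is manifestly a bijection, since every column height in $\lambda$ is hit with the correct multiplicity and the vertical shift is invertible.

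Under this bijection the legs match after the vertical shift: $\ell_\nu(c(i),j+1)+1=\lambda_{c(i)}-j=\lambda^-_i-j=\ell_{\nu^-}(i,j)$. For the arms, split each of $\widetilde a_{\nu^-}(i,j)$ and $a_\nu(c(i),j+1)$ into a $\lambda$-contribution (columns $1,\dots,n-k$) and a $\gamma$-contribution (columns $n-k+1,\dots,n$). Every $\gamma$-column sits to the right of both $i$ and $c(i)$, so the $\gamma$-parts both reduce to $\#\{s:j\le\gamma_s<\lambda^-_i\}$; here the shift $j\mapsto j+1$ is exactly what converts the $j-1$ lower bound in $a$ into the $j$ lower bound in $\widetilde a$. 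For the $\lambda$-part, monotonicity of $\lambda^-$ kills the ``$r>i$ with $\lambda^-_r<\lambda^-_i$'' contribution, while monotonicity of $\lambda$ forces the ``$r<c(i)$'' count in $a_\nu$ to consist solely of columns of height exactly $\lambda_{c(i)}$. The remaining terms on each side both enumerate $\lambda$-columns of height in $[j,\lambda^-_i)$, and the residual counts within the height-$\lambda^-_i$ block match because $c$ preserves relative order.

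The main obstacle is the bookkeeping in the arm comparison: the definitions of $\widetilde a$ and $a$ differ by $\le$ versus $<$ on the right-side bound and by $j$ versus $j-1$ on the lower bound, and one must verify that the shift $j\mapsto j+1$ together with the column reordering $c$ absorbs these asymmetries exactly. Example~\ref{DiagramExample} offers a useful sanity check that the bijection and the statistic identities behave as claimed.
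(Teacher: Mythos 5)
Your proposal is correct and matches the paper's own argument in essence: the paper also pairs each box of $d_{>1}(\lambda)$ with the box one row lower in the reordered column of $\lambda^-$ (your map read in the opposite direction), checks that legs shift by one while $\widetilde a_{\nu^-}$ equals $a_\nu$ by comparing the $\gamma$-columns, the equal-height $\lambda$-columns, and the height-$(\lambda_i-1)$ columns, and then cancels to leave the product over $d_{top}(\lambda^-)$. Your bookkeeping of the $\le$ versus $<$ bounds and the order-preservation of the column bijection is exactly the verification the paper carries out, so there is no gap.
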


\begin{proof}
Let $w_{\lambda^-}\in \Sn$ be the shortest element such that $w_{\lambda^-}(\lambda)=\lambda^-$. Note that $w_{\lambda^-}$ does not change the ordering of columns of the same height relative to each other. Consider the boxes $u=(i,j)$ in $d_{>1}(\lambda)$ inside the diagram $\nudom$, and $u'=(w_{\lambda^-}(i),j-1)$, the box in $\nuantidom$ immediately below the box corresponding to $u$ after reversing $\lambda$. Since $u$ is not in the first row of $\lambda$, the box $u'$ appears in $\lambda^-$ and not in $d_{top}(\lambda)$.

We wish to show that $\widetilde{a}_\numin(u')=a_\nu(u)$ and $\ell_{\numin}(u')=\ell_\nu(u)+1$. The leg equality is immediate, since $u$ and $u'$ are in columns of the same height and $u'$ is one row below $u$, and thus has one more leg than $u$.

To count the arms in $\widetilde{a}_\numin(u')$ and $a_\nu(u)$, consider the arms which are counted in the following cases:

\begin{itemize} \item Consider a column in $\gamma$ which could be counted in the right arms of $u$ and $u'$. The same box is counted in the right arms, using the definitions of $a$ and $\widetilde{a}$, since $a_\nu(u)$ counts boxes in the row below $u$, and $\widetilde{a}_\numin(u')$ counts boxes in the same row as $u'$, which are the same row, $j-1$. And the criterion for the box to be counted is the same in either case.

\item In $\lambda$ and $\lambda^-$, columns of height $\lambda_i$ are only counted if they are to the left of the box $u$ or $u'$ in their respective diagrams. Because those columns maintain the same order relative to each other, and left arms in $a$ and $\widetilde{a}$ are counted the same, $u$ and $u'$ have the same number of arms in those columns. 

\item Columns of height $\lambda_i-1$ appear to the right of $u$ in $\nu$ since $\lambda$ is weakly decreasing, and to the left of $u'$ in $\numin$ since $\lambda^-$ is weakly increasing. Then since the columns have fewer boxes than $\lambda_i$, these columns contribute to right right arm in $a_\nu(u)$. Similarly, the columns contribute to the left arm of in $\widetilde{a}_\numin(u')$.

\item Columns of height greater than $\lambda_1$ or less than $\lambda_1-1$ do not contribute to either $\widetilde{a}_\numin(u')$ or $a_\nu(u)$.

\end{itemize}
Altogether, this implies that $\widetilde{a}_\numin(u') = a_\nu(u)$, and from that we can see that, \[1-q^{\ell_\numin(u')}t^{\widetilde{a}_\numin(u') + 1} = 1-q^{\ell_\nu(u)+1}t^{a_\nu(u) +1}.\]
This allows us to cancel most of the terms in the quotient on the left side of \cref{lambdaequality}. The terms coming from the boxes in $d_{>1}(\lambda)$ correspond to the terms from the boxes below them in $\lambda^-$. What's left in the numerator are terms from the top horizontal strip, $d_{top}(\lambda^-)$.
\end{proof}

Now to complete the proof that $\JJ$ is integral, it remains to show that the Poincar\'e polynomial $W^{n-k}_\lambda(t)$ is cancelled by the symmetrization of $E_\nu$.

\begin{thm}\label{Integrality}
If we expand $\displaystyle{\JJ = \sum_\mu c_{ \mu} x^\mu}$, with $\lambda$ weakly decreasing, then $c_{ \mu} \in \Z[q,t] $ for any $\mu$.
\end{thm}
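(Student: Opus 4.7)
The plan is to combine the decomposition already established in \eqref{simplifiedJ} with \cref{redundancy} and \cref{lambdalemma}, reducing the problem to cancelling the Poincar\'e polynomial $W^{n-k}_\lambda(t)$ against the partial Hecke symmetrization.

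Since $\lambda$ is weakly decreasing, the stabilizer $(\Sn)_\lambda$ is a standard parabolic subgroup of $\Sn$ generated by the simple reflections $s_i$ with $\lambda_i = \lambda_{i+1}$. I would decompose $\Sn = \bigsqcup_{w_2} w_2 (\Sn)_\lambda$ via minimal-length coset representatives, so that $T_{w_2 w_1} = T_{w_2} T_{w_1}$ for every $w_1 \in (\Sn)_\lambda$. Then \eqref{simpledz} applied inductively along a reduced word of $w_1$ inside $(\Sn)_\lambda$ yields $T_{w_1} E_{\lambdagamma} = t^{\ell(w_1)} E_{\lambdagamma}$, so
\[
\en E_{\lambdagamma} = W^{n-k}_\lambda(t) \sum_{w_2} T_{w_2} E_{\lambdagamma}.
\]
Because $\mathcal{E}_\nu$ differs from $E_\nu$ only by a scalar in $\Z[q,t]$, the analogous identity holds with $\mathcal{E}_\lambdagamma$ in place of $E_\lambdagamma$, and the $W^{n-k}_\lambda(t)$ appearing in the denominator of \eqref{simplifiedJ} cancels.

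Substituting this cancellation into \eqref{simplifiedJ} and applying \cref{lambdalemma} gives
\[
\JJ = \prod_{\square \in d_{top}(\lambda^-)} \!\bigl(1-q^{\legnumin}t^{\armonenumin + 1}\bigr) \cdot \sum_{w_2} T_{w_2}\!\left( \frac{\mathcal{E}_{\lambdagamma}}{\prod\limits_{\square \in d_1(\lambda)}\bigl(1-q^{\legnu+1}t^{\armtwonu+1}\bigr)} \right).
\]
The leading product is manifestly in $\Z[q,t]$, and by \cref{redundancy} the argument of $T_{w_2}$ lies in $\Z[q,t][x_1,\dotsc,x_n]$. A routine check on the formula $T_i = ts_i + \frac{(t-1)x_{i+1}}{x_{i+1}-x_i}(1-s_i)$ shows that each $T_i$, and hence each $T_{w_2}$, preserves $\Z[q,t][x_1,\dotsc,x_n]$: for any polynomial $f$, the difference $f - s_i f$ is divisible by $x_i - x_{i+1}$ in $\Z[x_1,\dotsc,x_n]$, so the second summand remains polynomial with coefficients in $\Z[q,t]$. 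This will establish $c_\mu \in \Z[q,t]$.

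The main subtlety lies in the first step above: one must verify that the minimal coset representatives $w_2$ satisfy $\ell(w_2 w_1) = \ell(w_2) + \ell(w_1)$ for every $w_1 \in (\Sn)_\lambda$, so that the Hecke relations factor cleanly as $T_{w_2 w_1} = T_{w_2} T_{w_1}$, and that the iterated use of \eqref{simpledz} is legitimate because every intermediate weight encountered along a reduced word of $w_1$ inside the parabolic is still fixed by the next simple reflection. Once these Coxeter-theoretic points are in place, the integrality argument is essentially formal.
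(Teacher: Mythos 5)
Your proposal is correct and follows essentially the same route as the paper: reduce via \eqref{simplifiedJ} and \cref{lambdalemma}, cancel $W^{n-k}_\lambda(t)$ by splitting $\Sn$ into minimal coset representatives times the stabilizer and applying \eqref{simpledz}, then invoke \cref{redundancy} together with the fact that the $T_i$ preserve $\Z[q,t][x_1,\dotsc,x_n]$. The Coxeter-theoretic points you flag (length additivity for parabolic coset decompositions and the legitimacy of iterating \eqref{simpledz} inside $(\Sn)_\lambda$) are standard and hold here, so no gap remains.
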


\begin{proof} 
Applying \cref{lambdalemma} to \cref{simplifiedJ}, we can start by writing,
\[  \JJ =  \prod\limits_{\square \in d_{top}(\lambda^-)} (1-q^{\legnumin} t^{\armonenumin + 1}) \cdot \frac{1}{W^{n-k}_\lambda(t)}  \Symm{\frac{\mathcal{E}_{(\lambda \mid \gamma)}}{\prod\limits_{\square \in d_1(\lambda)} \left(1-q^{\legnu+1}t^{\armtwonu +1} \right)}} . \]
 Looking at the symmetrizer and Poincar\'e polynomial, and decomposing each $w$ into $w=w'w''$ where $w'\in \Stabn$ and $w''$ is the minimal length coset representative of $w\Stabn$ in $W^\lambda: = \Sn / \Stabn$, we get,
\[ \frac{1}{W^{n-k}_\lambda(t)} \sum\limits_{w\in \Sn} T_w = \frac{1}{\sum\limits_{w\in \Stabn(\lambda)} t^{\ell(w)}} \sum\limits_{w''\in W^\lambda}T_{w''} \sum\limits_{w'\in \Stabn}T_{w'}. \]
Then using \cref{simpledz}, the action of $T_{w'}$ is just multiplication by $t^{\ell(w')}$, and so,
\[ \frac{1}{W^{n-k}_\lambda(t)} \sum\limits_{w\in \Sn} T_w = \sum\limits_{w''\in W^\lambda}T_{w''}. \]
Finally, we can consider $\JJ$ as,
\[ \JJ= \prod\limits_{\square \in d_{top}(\lambda^-)} (1-q^{\legnumin} t^{\armonenumin + 1}) \cdot \sum\limits_{w''\in W^\lambda}T_{w''}  \left(\frac{\mathcal{E}_{(\lambda \mid \gamma)}}{\prod\limits_{\square \in d_1(\lambda)} \left(1-q^{\legnu+1}t^{\armtwonu +1} \right)} \right).\]

Since the operators $T_i$ preserve $\Z[q,t][x_1,\dotsc,x_n]$, and from \cref{HHLpiece}, the function inside the symmetrizer is in $\Z[q,t][x_1,\dotsc,x_n]$, we conclude that $\JJ \in \Z[q,t][x_1,\dotsc,x_n]$.
\end{proof}

\vspace*{.5cm} By definition, $P_{\lambdagamma}$ specializes to the symmetric and nonsymmetric Macdonald polynomials if we take $P_{(\lambda \mid \emptyset)}$ and $P_{(\emptyset \mid \gamma)}$ respectively. The formula for $\jj$ also specializes to the integrality constants of nonsymmetric Macdonald polynomials in \cite{MR3443860}, and in the symmetric Macdonald polynomials in \cite[section VI.8]{MR1427661}, so that
\[ j_{(\lambda \mid \emptyset)} P_{(\lambda \mid \emptyset)} = J_\lambda \hspace{.5cm} \text{ and } \hspace{.5cm}  j_{(\emptyset \mid \gamma)} P_{(\emptyset \mid \gamma)} = \mathcal{E}_\gamma,\]
meaning $\JJ$ specializes to the integral forms of the symmetric and nonsymmetric Macdonald polynomials respectively. Additionally, because the diagram for $\jj$ is constructed with $\lambda$ rewritten in weakly increasing order, any extra 0 entries appended to $\lambda$ will not change the diagram, and thus $j_{(\lambda \mid \gamma)} = j_{(\lambda, 0 \mid \gamma)}$ for any $\lambda$ and $\gamma$. Combining this with the stability of $\PP$, we have stability of $\JJ$ as well.

	\subsection{Expansion into nonsymmetric Macdonald basis}
	The nonsymmetric Macdonald polynomials form a basis for $\Lq[x_1, \dotsc, x_n]$, so every partially-symmetric Macdonald polynomial can be written in that basis. Additionally, we know from \cref{TiFormulas} that for weakly decreasing partition $\lambda$ and composition $\gamma$,
\[ \Pn \in \Lq \Spann \{ E_{(w(\lambda) \mid \gamma)}\}_{w\in \Sn }.\]
It will prove useful in later computations to have formulas for the coefficients of that expansion in the nonsymmetric Macdonald basis.

\begin{prop}\label{Pexpansion}
In the expansion of $\PP$ into the nonsymmetric Macdonald basis,
\[ \PP = \sum_{\mu \in S_{[1,n-k]}(\lambda)} \fmu E_{\mugamma},\]
the coefficients satisfy the recurrence relation,
\begin{align} 
f_{\lambda^-,\lambdagamma} &= 1 \label{identity}\\
\fsimu &= \fmu \cdot \frac{t-\betaa}{1-\betaa},\label{recursion}\end{align}
if $s_i(\mu) < \mu$ and $u=(i, \mu_{i}+1)$.
\end{prop}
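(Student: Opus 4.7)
The plan is to exploit that $\PP$ is symmetric in $x_1, \dotsc, x_{n-k}$, so $T_i \PP = t\PP$ for each $1 \le i \le n-k-1$: the quadratic relation \eqref{quadratic} forces $T_i$ to act as multiplication by $t$ on any polynomial fixed by $s_i$. Substituting the expansion $\PP = \sum_{\mu} \fmu E_{(\mu\mid\gamma)}$ on both sides and expanding each $T_i E_{(\mu\mid\gamma)}$ via \cref{TiFormulas}, one obtains an identity in the basis $\{ E_{(\nu\mid\gamma)} : \nu \in \Sn(\lambda) \}$. Since $T_i$ only mixes $E_{(\nu\mid\gamma)}$ with $E_{(s_i\nu\mid\gamma)}$, comparing coefficients of a single $E_{(\nu\mid\gamma)}$ produces a two-term linear relation involving only $\fmu$ for $\mu \in \{\nu, s_i\nu\}$.

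To derive \eqref{recursion}, I would fix $\mu$ with $s_i\mu < \mu$ and extract the coefficient of $E_{(s_i\mu\mid\gamma)}$ on both sides of $T_i\PP = t\PP$. The right side gives $t\,\fsimu$. On the left, only the contributions from $\fsimu \cdot T_i E_{(s_i\mu\mid\gamma)}$ (handled by \eqref{E:T-on-E-up}, since $(s_i\mu)_i > (s_i\mu)_{i+1}$) and $\fmu \cdot T_i E_{(\mu\mid\gamma)}$ (handled by \eqref{decreasingT}) produce $E_{(s_i\mu\mid\gamma)}$. The key bookkeeping step is to observe that the box $\square = (i,(s_i\mu)_{i+1}+1) = (i,\mu_i+1) = u$ appearing in \eqref{E:T-on-E-up} coincides with the box $u$ of \eqref{decreasingT}, so the factor $q^{\ell+1}t^{a}$ arising in each of the two contributions is the same $\beta$. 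Collecting $\fsimu$ on one side, using the identity $t(1-\beta)+(1-t)=1-t\beta$, and cancelling the common factor $(1-t\beta)$, the resulting equation reduces to $\fsimu = \fmu\cdot\frac{t-\beta}{1-\beta}$, which is \eqref{recursion}.

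For the base case \eqref{identity}, I would apply the coset decomposition $\Sn = \bigsqcup_{w'\in W^\lambda} w'\Stabn(\lambda)$, where $W^\lambda$ is the set of minimal-length coset representatives. Since $\lambda$ is $\Stabn(\lambda)$-invariant, iterating \eqref{simpledz} gives $T_{w''} E_{(\lambda\mid\gamma)} = t^{\ell(w'')} E_{(\lambda\mid\gamma)}$ for $w''\in\Stabn(\lambda)$, and thus
\[
\PP \;=\; \frac{\en E_{(\lambda\mid\gamma)}}{W_\lambda(t)} \;=\; \sum_{w'\in W^\lambda} T_{w'} E_{(\lambda\mid\gamma)}.
\]
For each $w'\in W^\lambda$, iterating \eqref{E:T-on-E-up} along a reduced expression of $w'$ (each simple reflection step of which moves up in Bruhat order, by minimality of $w'$) shows that $T_{w'}E_{(\lambda\mid\gamma)}$ equals $E_{(w'\lambda\mid\gamma)}$ plus strictly lower Bruhat terms. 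Since $\lambda^-$ is Bruhat-maximal in $\Sn(\lambda)$, only the unique $w^*\in W^\lambda$ with $w^*\lambda=\lambda^-$ contributes to the $E_{(\lambda^-\mid\gamma)}$ coefficient, and does so with coefficient $1$. The main obstacle is not any deep idea but careful combinatorial alignment of arm and leg data under the two different formulas for $T_i E_{(\mu\mid\gamma)}$ and $T_i E_{(s_i\mu\mid\gamma)}$; the algebra collapses cleanly once one recognizes that both rely on the single box $u=(i,\mu_i+1)$, computed in $\dg((s_i\mu\mid\gamma))$.
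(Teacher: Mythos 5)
Your proposal is correct and follows essentially the same route as the paper: extract the coefficient of $E_{(s_i(\mu)\mid\gamma)}$ from $T_i\PP = t\PP$ using \eqref{E:T-on-E-up} and \eqref{decreasingT} with the single box $u=(i,\mu_i+1)$ read off the diagram of $(s_i(\mu)\mid\gamma)$, then simplify via $t(1-\beta)+(1-t)=1-t\beta$ to land on \eqref{recursion}. Your base-case argument (coset decomposition over minimal representatives plus Bruhat triangularity, with $\lambda^-$ maximal in the orbit) is just a more explicit version of the paper's one-line appeal to \cref{TiFormulas} and triangularity, so nothing is missing.
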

\begin{proof}
The identity \cref{identity} can be found using the formulas in \cref{TiFormulas} and the triangularity of the nonsymmetric Macdonald polynomials. Let $\mu \in S_{[1,n-k]}(\lambda)$ be some weight such that $s_i(\mu)<\mu$ and define $u=(i,\mu_{i}+1)$. Since $\PP$ is invariant under the permutation $s_i$,

\[ T_i \PP = t \PP.\]
In the expansion into the nonsymmetric Macdonald basis, writing $\nu \sim \lambda$ to mean $\nu$ is a rearrangement of $\lambda$ (equivalently, $\nu \in S_{[1,n-k]}(\lambda)$),
\[ \sum_{\nu \sim \lambda} \fnuu T_i E_{\nugamma} = \sum_{\nu \sim \lambda} \fnuu t E_{\nugamma}.   \]
The recursive identity can be recovered by comparing the coefficients of $E_{(s_i(\mu) \mid \gamma)}$ on each side. On the right, we simply have $t\cdot f_{s_i(\mu),\lambdagamma}$. On the left, because $T_i E_{(\mu \mid \gamma)}$ is a linear combination of $E_{(\mu \mid \gamma)}$ and $E_{(s_i(\mu) \mid \gamma)}$, the only possible values of $\nu$ which contribute to the coefficient of $E_{(s_i(\mu) \mid \gamma)}$ are $E_\mugamma$ and $E_{(s_i(\mu) \mid \gamma)}$. Once again considering all arms and legs to be with respect to the diagram of $s_i(\mu)$, computing both possible contributions gives,
\begin{align*}
     T_i E_{\mugamma} &=  \frac{q^{\ell(u)+1}t^{a(u)}(1-t)}{1-q^{\ell(u)+1}t^{a(u)}} E_{\mugamma} +  \frac{(t-q^{\ell(u)+1}t^{a(u)})(1-q^{\ell(u)+1}t^{a(u)+1})}{(1-q^{\ell(u)+1}t^{a(u)})^2} E_{(s_i(\mu) \mid \gamma)}\\
    T_i E_{(s_i(\mu)\mid \gamma)} &=  E_{\mugamma} +  \frac{t-1}{1-q^{\ell(u)+1}t^{a(u)}} E_{(s_i(\mu)\mid \gamma)}
\end{align*}
Thus the coefficient of $E_{(s_i(\mu)\mid \gamma)}$ on the left side is,
\[ f_{s_i(\mu),\lambdagamma} \cdot \frac{t-1}{1-q^{\ell(u)+1}t^{a(u)}} + f_\mu  \cdot \frac{(t-q^{\ell(u)+1}t^{a(u)})(1-q^{\ell(u)+1}t^{a(u)+1})}{(1-q^{\ell(u)+1}t^{a(u)})^2}.\]
This must be equal to $t\cdot f_{s_i(\mu),\lambdagamma}$, which implies,
\[ f_{s_i(\mu),\lambdagamma} \left( t-\frac{t-1}{1-q^{\ell(u)+1}t^{a(u)}} \right) = f_\mu  \cdot \frac{(t-q^{\ell(u)+1}t^{a(u)})(1-q^{\ell(u)+1}t^{a(u)+1})}{(1-q^{\ell(u)+1}t^{a(u)})^2}.\]
After simplification, this is exactly \cref{recursion}.
\end{proof}

\begin{prop}
There is a closed formula for $\fmuu$,
\begin{equation}\label{closedform} \fmuu = t^{\ell(w_m)} \cdot \frac{\prod\limits_{\square \in (\lambda^- \mid \gamma)} \left( 1-q^{\ell_{(\lambda^- \mid \gamma)} (\square)+1} t^{a_{(\lambda^- \mid \gamma)} (\square)} \right)}{\prod\limits_{\square \in (\mu \mid \gamma)} \left( 1-q^{\ell_{(\mu \mid \gamma)} (\square)+1} t^{a_{(\mu \mid \gamma)} (\square)} \right) }, \end{equation}
where $w_m$ is the minimal length element in $S_{[1,n-k]}$ such that $w_m(\lambda^-)=\mu$.
\end{prop}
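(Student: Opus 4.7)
My plan is to induct on $r := \ell(w_m)$, using the recurrence \cref{recursion} from \cref{Pexpansion}. For the base case $r = 0$, we have $w_m = \id$ and $\mu = \lambda^-$, so the two products over $\lambdagamma$ and $\mugamma$ in the claimed formula are literally the same set of factors; the right-hand side is $t^0 \cdot 1 = 1$, which matches \cref{identity}.

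For the inductive step, assume the formula holds for every $\mu' \in \Sn(\lambda)$ with $\ell(w_{m'}) < r$. Choose $i$ so that $s_i$ is the leftmost letter of a reduced expression for $w_m$; then $\mu' := s_i(\mu)$ satisfies $\ell(w_{m'}) = r - 1$, and one checks (using $\mu = w_m(\lambda^-)$ with $\lambda^-$ weakly increasing) that $\mu_i > \mu_{i+1}$. Applying \cref{recursion} with the appropriate pair playing the role of $(\mu, s_i(\mu))$ yields
\[
\fmuu \;=\; f_{\mu', \lambdagamma} \cdot \frac{t - \beta}{1 - \beta},
\qquad
\beta = q^{\ell_\mu(u)+1} t^{a_\mu(u)}, \quad u = (i, \mu_{i+1}+1) \in \dg(\mu).
\]
Substituting the inductive expression for $f_{\mu', \lambdagamma}$ reduces the statement to the combinatorial identity
\[
\frac{\displaystyle\prod_{\square \in \mugamma}\bigl(1 - q^{\ell_\mugamma(\square)+1} t^{a_\mugamma(\square)}\bigr)}{\displaystyle\prod_{\square \in (\mu' \mid \gamma)}\bigl(1 - q^{\ell_{(\mu' \mid \gamma)}(\square)+1} t^{a_{(\mu' \mid \gamma)}(\square)}\bigr)} \;=\; \frac{t(1 - \beta)}{t - \beta},
\]
which supplies the extra factor of $t$ in $t^r = t \cdot t^{r-1}$.

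The main obstacle is this identity. Write $a = \mu_i$ and $b = \mu_{i+1}$ (with $a > b$); the diagrams of $\mu$ and $\mu'$ differ only by swapping columns $i$ and $i+1$, and three observations pare the ratio down to a single factor. First, any box whose column index lies outside $\{i, i+1\}$ --- in particular every box of the $\gamma$-portion, since $\gamma$ sits in columns exceeding $n - k \geq i+1$ --- has arm and leg unchanged under the swap, because the arm-counting conditions applied to columns $i$ and $i+1$ depend only on the multiset $\{\mu_i, \mu_{i+1}\}$. Second, within the swapped columns the boxes in rows $1, \ldots, b$ pair naturally across the two diagrams as $(i,j) \leftrightarrow (i+1,j)$ at column height $a$ and $(i+1,j) \leftrightarrow (i,j)$ at column height $b$, and each pair shares arm and leg. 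Third, the ``extra'' boxes in rows $b+1, \ldots, a$ of column $i$ of $\mu$ pair with the corresponding extra boxes in column $i+1$ of $\mu'$; for $j > b+1$ these pairs again match, while at $j = b+1$ the arm of $u = (i, b+1)$ in $\mu$ exceeds that of its partner $(i+1, b+1)$ in $\mu'$ by exactly one. The extra unit comes from column $i+1$ of $\mu$, of height $b$, satisfying the arm condition $j-1 \leq b < a$ for $u$, whereas the symmetric condition $j \leq b$ fails for the paired box in $\mu'$.

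After cancellation, the ratio of products collapses to
\[
\frac{1 - q^{a-b} t^{a_\mu(u)}}{1 - q^{a-b} t^{a_\mu(u)-1}} \;=\; \frac{1 - \beta}{1 - \beta/t} \;=\; \frac{t(1 - \beta)}{t - \beta},
\]
establishing the identity and closing the induction. The bulk of the work --- and the part requiring the most care --- is the arm/leg bookkeeping between the two swapped columns; the rest of the proof is formal manipulation, and the identity becomes transparent once one recognizes the box $u$ as the unique source of a surviving factor.
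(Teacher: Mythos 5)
Your proof is correct and follows essentially the same route as the paper: both arguments reduce the claim to the recurrence of \cref{Pexpansion} and to the observation that under the swap of columns $i$ and $i+1$ every box of $(\mu\mid\gamma)$ and $(s_i(\mu)\mid\gamma)$ keeps its arm and leg except the single box at height $\mu_{i+1}+1$, whose arm drops by one, yielding the factor $\frac{t(1-\beta)}{t-\beta}$. The only (cosmetic) difference is that you organize this as an explicit induction on $\ell(w_m)$ via a left descent, while the paper simply verifies that the closed formula satisfies the recurrence and the initial condition.
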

\begin{proof}
We will prove this by showing that the given formula for $\fmuu$ satisfies the recurrence relations in \cref{Pexpansion}. The identity \cref{identity} is trivial. To show \cref{recursion}, suppose $s_i(\mu)<\mu$ for some $\mu \in S_{[1,n-k]}(\lambda)$. Let $w_m$ and $w'_m$ be the elements in $S_{[1,n-k]}$ of minimal length such that $w_m(\lambda^-)=s_i(\mu)$ and $w'_m(\lambda^-)=\mu$. We must now simplify
\begin{equation}\label{extrabox}\frac{f_{s_i(\mu),\lambdagamma}}{\fmuu} = t^{\ell(w_m)-\ell(w'_m)} \cdot \frac{\prod\limits_{\square \in (\mu \mid \gamma)} \left( 1-q^{\ell_{(\mu\mid \gamma)} (\square)+1} t^{a_{(\mu \mid \gamma)} (\square)} \right)}{\prod\limits_{\square \in (s_i(\mu) \mid \gamma)} \left( 1-q^{\ell_{(s_i(\mu) \mid \gamma)} (\square)+1} t^{a_{(s_i(\mu) \mid \gamma)} (\square)} \right) }. \end{equation}
Because $s_i(\mu)<\mu$, we have $\ell(w_m)-\ell(w'_m)=1$. Note that every box in the diagrams $\mugamma$ and $(s_i(\mu)\mid \gamma)$ has the same arms and legs in its corresponding diagram except one. The exception is the box $u$ whose coordinates in $s_i(\mu)$ are $(i,\mu_i+1)$, and the corresponding box $u'$ in $\mu$ with coordinates $(i+1,\mu_i+1)$. The boxes $u'$ and $u$ have the same number of legs. And $u$ has one more box in $s_i(\mu)$ in its arm than $u'$ in $\mu$, namely the box $(i+1,s_i(\mu)_{i+1})$. Thus we can write
\[ 1-q^{\ell_{(\mu\mid \gamma)} (u')+1} t^{a_{(\mu \mid \gamma)} (u')}=1-q^{\ell_{(s_i(\mu)\mid \gamma)} (u)+1} t^{a_{(s_i(\mu) \mid \gamma)-1} (u)}.\]
Now since every term from a box other than $u$ and $u'$ in \cref{extrabox} cancels, we are left with
\begin{align*} \frac{f_{s_i(\mu),\lambdagamma}}{\fmuu} &= t \cdot \frac{ 1-q^{\ell_{(\mu\mid \gamma)} (u')+1} t^{a_{(\mu \mid \gamma)} (u')}}{ 1-q^{\ell_{(s_i(\mu) \mid \gamma)} (u)+1} t^{a_{(s_i(\mu) \mid \gamma)} (u)} }\\
&= \frac{t-q^{\ell_{(s_i(\mu) \mid \gamma)} (u)+1} t^{a_{(s_i(\mu) \mid \gamma)} (u)}}{1-q^{\ell_{(s_i(\mu) \mid \gamma)} (u)+1} t^{a_{(s_i(\mu) \mid \gamma)} (u)}}.\end{align*}
Therefore the formula \cref{closedform} satisfies the recursive condition \cref{recursion}, and so the closed form we found is an equation for the coefficients $\fmuu$. \end{proof}

\section{Multiplication Rules}\label{ElementaryMultiplication}

Our next goal with the partially-symmetric Macdonald polynomials will be to find multiplication rules for $e_1[x_1,\dotsc, x_{n-k}] P_{\lambdagamma}$ in the basis $P_{\munu}$ of partially-symmetric Macdonald polynomials, where $e_1[x_1,\dotsc, x_{n-k}]$ is the elementary symmetric polynomial $x_1+ \dotsm + x_{n-k}$. After this is done, we will find a similar formula for $x_j \PP$ with $j>n-k$, so $x_j$ is a nonsymmetric variable, in a similar way. We refer to those coefficient formulas as Pieri rules.

	\subsection{Setup and notation}
	From this point forward, we make the assumption that $0<k<n$. Because $e_1[x_1, \dotsc, x_{n-k}]$ and $\PP{\lambdagamma}$ are both symmetric in the first $n-k$ variables, so is their product. Additionally, since the two polynomials are homogeneous with degree 1 and $|\lambdagamma|$ respectively, their product will be a homogeneous polynomial with degree $|\lambdagamma|+1$. There is therefore an expansion,
\begin{equation}\label{finalexpansion} (x_{1} + \dots + x_{n-k}) \PP{\lambdagamma} =  \sum_{\substack{|\mueta| = |\lambdagamma| + 1 \\ \mu_1 \geq \, \dotsm \, \geq \mu_{n-k}}}  C^{\lambdagamma}_{\mueta} \PP{\mueta}.  \end{equation}

In order to compute the coefficients, we will be using similar methods to those used by Baratta in \cite{Ba} to compute Pieri rules for the nonsymmetric Macdonald polynomials, in their case for $x_i E_\mu$. Because we will be making use of several of the formulas in that paper, we start by converting our notation into theirs.

The first difference in notation in our constructions and Baratta's is the indexing of the variables. In \cite{Ba}, the variables are $z_j$ and are written in reverse order, so our $x_i$ corresponds to $z_{n+1-i}$. As a result, the vectors we have considered to this point, written as $\lambdagamma$, will be reversed, so our `symmetrized' variables appear at the end. For the element of maximum length $w_0 \in S_n$, the vector $\lambdagamma$ in our notation then corresponds to $w_0\lambdagamma$ in Baratta's.

The Hecke algebra operators $H_j$ are defined with identical relations to ours, with $H_j$ replacing $T_j$ in the braid relations and quadratic relations \cref{quadratic}. The action of $H_j$ is given by the equation,
\[ H_j = \frac{(t-1)z_j}{z_j-z_{j+1}} + \frac{z_j-tz_{j+1}}{z_j-z_{j+1}} s_j. \label{DLbaratta}\]
Then using the fact that $x^\mu = z^{w_0(\mu)}$, we find,
\[ T_i f(x_1, \dotsc, x_n) = H_{n-i}f(z_n, \dotsc, z_1).\]

Under these conventions, the nonsymmetric Macdonald polynomials $\EI{\lambdagamma}$ correspond to Baratta's $E_{w_0 \lambdagamma}(z;q^{-1},t^{-1})$. The inverted powers of $q$ and $t$ are a formality, since we only use that form of the nonsymmetric Macdonald polynomials as an intermediate step. For clarity, we will also denote the polynomials in Baratta's conventions with the dagger, so $\EBlong{\gammalambdamin}$. Specifically, they correspond by the equation,
\[ \EIlong{\lambdagamma}=\left[\EBlong{w_0\lambdagamma}\right]_{z_i \mapsto x_{n+1-i}}. \]
Where it is unambiguous, the argument $(z;q^{-1},t^{-1})$ will be suppressed. Using the Hecke algebra isomorphism mapping $T_i$ to $H_{n-i}$, we get the definition of the partially-symmetric Macdonald polynomials,
\[\PBlong{\gammalambdamin} := \frac{1}{W^{n-k}_{\lambda}(t)} \sum_{w\in S_{[k+1, n]}} H_w \EBlong{\gammalambdamin}.\]

Because the $H_w$ act the same as the corresponding $T_w$ in our symmetrizers, we will also write this as,
\[ \PB{\gammalambdamin} = \eplusB  \EB{\gammalambdamin} \hspace*{.5cm} \text{ or } \hspace*{.5cm} \PB{\gammalambdamin} = \ep \EB{\gammalambdamin}. \]

In cases where we need the expansion of $\PB{\gammalambdamin}$ as a $\mathbb{Q}(q,t)$-linear combination of the nonsymmetric Macdonald polynomials, we will write,
\[ \PB{\gammalambdamin}=\sum_{\nu \sim \lambda} \fnu \EB{\gammanu},\]
where the $\fnu$ is equal to $f_{\mu, \eta}$ in \cref{Pexpansion} where $\nu$ is $\mu$ written in reverse order, and $\eta$ is $\gammalambdamin$ in reverse order.
    \subsection{Interpolation Polynomials}
	The following definitions can be found in \cite{Ba}. In order to derive Pieri-type formulas, we will be working with the interpolation nonsymmetric Macdonald polynomials. Begin with the eigenvalues associated with $E_\nu^\dagger$,
\begin{equation}\label{eigenvalueformula} \overline{\nu}_i = q^{\nu_i} t^{-l'_\nu(i)}, \hspace*{.5cm} 1\leq i \leq n,\end{equation}
\[ l'_\nu(i) = \# \{j<i \mid \nu_j \geq \nu_i\} + \# \{j>i \mid \nu_j > \nu_i\}.\]
Let $\overline{\nu} = (\overline{\nu}_1, \dotsc, \overline{\nu}_n)$ be the vector whose entries are the corresponding eigenvalues. These eigenvalues can be used to define the interpolation nonsymmetric Macdonald polynomials, which are the unique polynomials (up to normalization) $\Estarlong{\nu}$ of degree $| \nu |$ which satisfy evaluation criteria,
\begin{align}\label{vanishing} \begin{cases}\Estar{\nu}(\overline{\mu}) = 0  \hspace*{.5cm}\text{if } \mu \neq \nu \text{ and } |\mu| \leq |\nu|\\
\Estar{\nu}(\overline{\nu}) \neq 0
\end{cases}\end{align}

In fact, the latter evaluation has an explicit formula, given in Proposition 6 of \cite{Ba}:
 \begin{equation}\label{Estarformula} E^*_{\nu}(\overline{\nu}) = \left( \prod_{i=1}^n \overline{\nu}_i^{\nu_i} \right) \prod_{\square \in \nu} (1-q^{-(\ell_\nu(\square)+1)} t^{-a_\nu(\square)}).\end{equation}
 
 By Theorem 3.9 in \cite{KnopSymmetric}, the top homogeneous component of $\Estar{\nu}$ is the corresponding nonsymmetric Macdonald polynomial $\EB{\nu}$. In other words,
\[\Estarlong{\nu} = \EBlong{\nu} + \sum_{|\mu| < |\nu|} h_{\mu,\nu} \EBlong{\mu}.  \]

Thus define a $\mathbb{Q}(q,t)$-linear isomorphism $\Psi:\K[x_1, \dotsc, x_n] \to \K[x_1, \dotsc, x_k]$ which sends $\EBlong{\nu}$ to $\Estarlong{\nu}$. Note that the powers of $q$ and $t$ in the argument of the interpolation polynomials are again positive, so in our notation, $\EIlong{\nu}$ corresponds to $\Estarlong{w_0(\nu)}$.

The following lemmas will be useful when we look at the actions of the Hecke algebra on the interpolation polynomials.

\begin{lem}\label{eigenvaluepermutations}
Let $\mu$ be a composition with its corresponding vector of eigenvalues $\overline{\mu}$. Then $s_i \overline{\mu} = \overline{s_i\mu}$ if and only if $\mu_i \neq \mu_{i+1}$.
\end{lem}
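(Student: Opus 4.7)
The plan is to verify the equality coordinate-by-coordinate using the explicit formula \cref{eigenvalueformula}. Write $\overline{\mu}_j = q^{\mu_j} t^{-l'_\mu(j)}$ and compare with $\overline{s_i\mu}_j = q^{(s_i\mu)_j} t^{-l'_{s_i\mu}(j)}$. The $q$-parts already match the action of $s_i$ (since the multiset of parts is preserved and only positions $i,i+1$ are affected), so the entire question reduces to comparing $l'_\mu$ with $l'_{s_i\mu}$.

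For a coordinate $j \notin \{i,i+1\}$, I would observe that both positions $i$ and $i+1$ lie strictly on the same side of $j$. Since the sets $\{\mu_i,\mu_{i+1}\}$ and $\{(s_i\mu)_i,(s_i\mu)_{i+1}\}$ are equal as multisets, the cardinalities defining $l'_\mu(j)$ and $l'_{s_i\mu}(j)$ are unchanged; hence $(s_i\overline{\mu})_j = \overline{\mu}_j = \overline{s_i\mu}_j$ regardless of whether $\mu_i = \mu_{i+1}$ or not. The interesting coordinates are therefore $i$ and $i+1$.

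For the forward direction, assume $\mu_i \neq \mu_{i+1}$. I would expand $l'_\mu(i+1)$ by separating the contribution of position $i$:
\begin{equation*}
l'_\mu(i+1) = \#\{k<i : \mu_k \geq \mu_{i+1}\} + [\mu_i \geq \mu_{i+1}] + \#\{k>i+1 : \mu_k > \mu_{i+1}\},
\end{equation*}
and similarly
\begin{equation*}
l'_{s_i\mu}(i) = \#\{k<i : \mu_k \geq \mu_{i+1}\} + [\mu_i > \mu_{i+1}] + \#\{k>i+1 : \mu_k > \mu_{i+1}\},
\end{equation*}
using that $(s_i\mu)_i = \mu_{i+1}$ and $(s_i\mu)_{i+1} = \mu_i$. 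The two expressions agree precisely when $[\mu_i \geq \mu_{i+1}] = [\mu_i > \mu_{i+1}]$, i.e., when $\mu_i \neq \mu_{i+1}$; this gives $(s_i\overline{\mu})_i = \overline{\mu}_{i+1} = \overline{s_i\mu}_i$. The analogous computation at position $i+1$ closes the forward direction.

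For the converse, assume $\mu_i = \mu_{i+1}$, so that $\overline{s_i\mu} = \overline{\mu}$. Now the same expansion shows $l'_\mu(i+1) = l'_\mu(i) + 1$ because the indicator $[\mu_i \geq \mu_{i+1}]$ contributes while $[\mu_{i+1} > \mu_i]$ does not. Therefore $\overline{\mu}_{i+1} = t^{-1}\overline{\mu}_i \neq \overline{\mu}_i$, so $s_i\overline{\mu} \neq \overline{\mu} = \overline{s_i\mu}$. The argument is really just bookkeeping on the strict/weak inequality in the definition of $l'$; the only subtle point, and the single place where the hypothesis $\mu_i \neq \mu_{i+1}$ actually enters, is the mismatch between the weak inequality on the left-of-$j$ side and the strict inequality on the right-of-$j$ side of $l'$.
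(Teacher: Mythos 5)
Your proof is correct and follows essentially the same route as the paper, which simply states that the lemma "comes by comparing directly the $i$th and $(i+1)$st column eigenvalues"; your coordinate-by-coordinate bookkeeping with the indicator terms $[\mu_i \geq \mu_{i+1}]$ versus $[\mu_i > \mu_{i+1}]$ is just that comparison carried out explicitly, including the converse via $l'_\mu(i+1) = l'_\mu(i)+1$ when $\mu_i = \mu_{i+1}$.
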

\begin{proof}
This comes by comparing directly the $i$th and $(i+1)$st column eigenvalues.
 \end{proof}

\begin{lem}\label{eigentwo}
Let $\mu$ be a composition with $\mu_n>0$. Then
\[\overline{(\mu_n-1, \mu_1, \dotsc, \mu_{n-1})} = (q^{-1} \overline{\mu}_n\, , \, \overline{\mu}_1\, , \, \dotsc\, , \overline{\mu}_{n-1}).\]
\end{lem}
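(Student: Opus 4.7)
The plan is to unpack the definition of the eigenvalue vector and verify the asserted identity coordinate by coordinate. Write $\nu=(\mu_n-1,\mu_1,\dotsc,\mu_{n-1})$, so that $\nu_1=\mu_n-1$ and $\nu_{i+1}=\mu_i$ for $1\le i\le n-1$. Using \eqref{eigenvalueformula}, proving the claim reduces to two separate $l'$-identities: for $2\le i\le n$, we need $l'_\nu(i)=l'_\mu(i-1)$ (together with $\nu_i=\mu_{i-1}$ this gives $\overline{\nu}_i=\overline{\mu}_{i-1}$), and for $i=1$, we need $l'_\nu(1)=l'_\mu(n)$ (which together with $\nu_1=\mu_n-1$ produces the $q^{-1}$ factor).

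For the second case, I would compute directly: $l'_\nu(1)=\#\{2\le j\le n\mid \mu_{j-1}>\mu_n-1\}=\#\{1\le j\le n-1\mid \mu_j\ge\mu_n\}$, which is exactly $l'_\mu(n)$ since the sum defining $l'_\mu(n)$ has no $j>n$ contribution. For the first case, I would separate the sum defining $l'_\nu(i)$ into the $j=1$ term, the $2\le j<i$ terms, and the $j>i$ terms. Reindexing via $j'=j-1$ converts the last two pieces into the analogous contributions for $l'_\mu(i-1)$, except that the first piece of $l'_\mu(i-1)$ stops at $j\le i-2$ while the reindexed sum over $j>i$ runs over $i\le j'\le n-1$ instead of $i\le j\le n$. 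Thus the discrepancy between $l'_\nu(i)$ and $l'_\mu(i-1)$ is exactly
\[
[\mu_n-1\ge \mu_{i-1}]-[\mu_n>\mu_{i-1}],
\]
and these two indicator quantities coincide because $\mu_n,\mu_{i-1}\in\mathbb{Z}$. This cancellation is the only subtle point; everything else is bookkeeping.

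There is no real obstacle beyond being careful with the reindexing and with the strict-versus-weak inequalities built into the definition of $l'$. The hypothesis $\mu_n>0$ is used only to guarantee that $\mu_n-1\in\mathbb{Z}_{\ge 0}$, so that $\nu$ is a genuine composition and the eigenvalue formula applies. Once both coordinate computations are in hand, the lemma follows by reading off the entries of $\overline{\nu}$.
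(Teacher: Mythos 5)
Your verification is correct and matches the paper's approach, which simply declares the identity ``immediate from the eigenvalue definition'': you carry out exactly that coordinate-by-coordinate comparison of the $l'$ statistics, with the integrality observation $[\mu_n-1\ge\mu_{i-1}]=[\mu_n>\mu_{i-1}]$ absorbing the weak/strict mismatch and the shift $\nu_1=\mu_n-1$ producing the $q^{-1}$. Nothing further is needed.
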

\begin{proof}
This is once again immediate from the eigenvalue definition.
\end{proof}

In Theorem 3.6 of \cite{KnopSymmetric}, the interpolation nonsymmetric Macdonald polynomials are proven to be simultaneous eigenfunctions of the commuting operators,
\[ \Xi_i := z_i^{-1} + z_i^{-1}H_i \dotsm H_{n-1} \Phi H_1 \dotsm H_{i-1},\]
where $\Phi$ is defined by
\[ \Phi = (z_n-t^{-n+1})\Delta,\]
\[ \Delta f(z_1, \dotsc, z_n) = f\left(\frac{z_n}{q}, z_1, \dotsc, z_{n-1}\right).\]
With these operators, we have
\[ \Xi_i \Estar{\nu}=(\overline{\nu}_i)^{-1} \Estar{\nu}.\]

Then define the following operators on the interpolation polynomials,
\[ Z_i := t^{-\binom{n}{2}} (z_i \Xi_i-1) \Xi_1 \dotsm \reallywidehat{\Xi_i} \dotsm \Xi_n,\]
where $\reallywidehat{\Xi_i}$ means $\Xi_i$ is not included in the list. 
The significance of $Z_i$ stems from a useful corollary in \cite{Ba},
\begin{equation}\label{isomorphism} z_i \EBlong{\nu} = q^{|\nu|} \Psi^{-1} Z_i \Estarlong{\nu}.\end{equation}
    \subsection{Interpolation polynomial expansion}
    We now begin with the computations that will lead to the desired Pieri formulas. First, following definitions from Baratta, define a modified version of the operators $Z_i$,
\[ \ZZ_i := H_i \dotsm H_{n-1} \Phi H_1 \dotsm H_{i-1}=z_i \Xi_i - 1. \]

With this definition, the definition of $Z_i$, and using commutativity of the $\Xi_j$, we have an identity,
\[ Z_i = t^{-\binom{n}{2}} \ZZ_i\, \Xi_i^{-1} \prod_{j=1}^n \Xi_j. \]

This form with the full product $\displaystyle{\prod_{j=1}^n \Xi_j}$ is useful since the product acts identically on all interpolation nonsymmetric Macdonald polynomials of the same degree.

\begin{lem}
Let $\mu$ be a composition. Then $\displaystyle{\prod_{j=1}^n \Xi_j \Estar{\mu} = q^{-|\mu|} t^{\binom{n}{2}} \Estar{\mu}}$.
\end{lem}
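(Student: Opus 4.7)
The plan is to reduce the lemma to a computation with the eigenvalues $\overline{\mu}_j$ and then carry out that computation. The key input is Knop's eigenvalue identity stated just before the lemma, $\Xi_i \Estar{\mu} = (\overline{\mu}_i)^{-1} \Estar{\mu}$. Since the operators $\Xi_j$ commute and are all diagonalized by the basis $\{\Estar{\mu}\}$, applying them one at a time gives
$$\prod_{j=1}^n \Xi_j \, \Estar{\mu} \;=\; \Big(\prod_{j=1}^n \overline{\mu}_j\Big)^{-1} \Estar{\mu},$$
so the lemma reduces to the scalar identity $\prod_{j=1}^n \overline{\mu}_j = q^{|\mu|}\, t^{-\binom{n}{2}}$.

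Using the explicit formula $\overline{\mu}_j = q^{\mu_j} t^{-l'_\mu(j)}$ from \eqref{eigenvalueformula}, the $q$-part is immediate: $\prod_j q^{\mu_j} = q^{|\mu|}$. Thus the whole statement boils down to the combinatorial identity $\sum_{j=1}^n l'_\mu(j) = \binom{n}{2}$. To prove it I would reorganize the sum by unordered pairs $\{a,b\}$ with $a < b$ and argue that each contributes exactly $1$. Recall
$$l'_\mu(i) \;=\; \#\{j<i : \mu_j \geq \mu_i\} \;+\; \#\{j>i : \mu_j > \mu_i\}.$$
Fix $a<b$. If $\mu_a > \mu_b$, then $a$ is counted in $l'_\mu(b)$ (via the first set with $\mu_a \geq \mu_b$) while $b$ is not counted in $l'_\mu(a)$ (since $\mu_b > \mu_a$ fails). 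If $\mu_a < \mu_b$, the symmetric case applies and $b$ contributes to $l'_\mu(a)$. If $\mu_a = \mu_b$, then $a$ still contributes to $l'_\mu(b)$ (the first set uses a weak inequality) but $b$ does not contribute to $l'_\mu(a)$ (the second set requires a strict one). In each case the pair contributes exactly $1$, so summing over the $\binom{n}{2}$ pairs gives $\sum_j l'_\mu(j) = \binom{n}{2}$, completing the proof.

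There is no real obstacle here—Knop's theorem does all the structural work, and what remains is a short combinatorial bookkeeping. The only subtlety worth highlighting is the asymmetry between the weak inequality on the left ($j<i$) and the strict inequality on the right ($j>i$) in the definition of $l'_\mu$; this asymmetry is exactly what makes the equal-pair case contribute $1$ rather than $0$ or $2$, ensuring a uniform count across all three cases.
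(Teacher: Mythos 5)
Your proof is correct and follows essentially the same route as the paper: apply Knop's eigenvalue identity to each $\Xi_j$, collect the $q$-power as $-|\mu|$, and observe that $\sum_{j=1}^n l'_\mu(j) = \binom{n}{2}$. The only difference is that you spell out the pairing argument for that last sum, which the paper simply asserts; your case analysis of the weak/strict inequalities is accurate.
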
 \begin{proof} The eigenvalue of $\displaystyle{\prod_{j=1}^n \Xi_j}$ acting on $E^\dagger_{\mu}$ has a power of $q$ equal to $\displaystyle{-\sum_{i=1}^n \mu_i} = -|\mu|$, and a $t$ power equal to $\displaystyle{\sum_{j=1}^n l'_\nu(i) = \binom{n}{2}}$. Hence the eigenvalue associated with $\displaystyle{\prod_{j=1}^n \Xi_j}$ is $q^{-|\mu|} t^{\binom{n}{2}}$.
\qedhere
\end{proof}

Every nonsymmetric Macdonald polynomial in the expansion of $\PB{\gammalambdamin}$ has the same degree, so $\displaystyle{\prod_{j=1}^n \Xi_j}$ acts identically on each of those terms, and thus,

\[ \prod_{j=1}^n \Xi_j \PB{\gammalambdamin} = q^{-|\mu|}t^{\binom{n}{2}} \PB{\gammalambdamin}.\]

We also need the fact that $H_i$ acts the same on nonsymmetric Macdonald polynomials as their interpolation counterparts. This can be seen in \cite[page 6]{LRW}. We have the relation,
\begin{equation}\label{Tiactthesame} H_i \EB{\nu} = \Psi^{-1} H_i \Estar{\nu} .\end{equation}

Now we have all the required pieces to set up the desired computation, beginning by using \cref{Tiactthesame} and the $\Lq$-linearity of $\Psi$ to pass $\Psi^{-1}$ through the symmetrizer, and then \cref{isomorphism} to move it through the sum of monomials.
\begin{align*}
    (z_{k+1} + \dots + z_n) \PB{\gammalambdamin} &=  \left(\sum_{j=k+1}^n z_j \right) \eplusB \Psi^{-1} \Estar{\gammalambdamin}\\
     &=  \left(\sum_{j=k+1}^n z_j \right) \Psi^{-1} \eplusB  \Estar{\gammalambdamin}\\
    &= q^{|\gammalambdamin|} \Psi^{-1} \left( \sum_{j=k+1}^n Z_j \eplusB \Estar{\gammalambdamin}\right)\\
    &= q^{|\gammalambdamin|}\Psi^{-1}  \left( \sum_{j=k+1}^n \left( t^{-\binom{n}{2}} \ZZ_j\, \Xi_j^{-1} \prod_{\ell=1}^n \Xi_\ell \right) \ep \Estar{\gammalambdamin}\right)\\
    &= \Psi^{-1} \left( \sum_{j=k+1}^n \left( \ZZ_j\, \Xi_j^{-1} \right) \ep \Estar{\gammalambdamin}\right)\\
    &= \Psi^{-1} \left( \sum_{j=k+1}^n \left( H_j \dotsm H_{n-1} \Phi H_1 \dotsm H_{j-1} \, \Xi_j^{-1} \right) \ep \Estar{\gammalambdamin}\right)   \end{align*}
Combining the above with \cref{finalexpansion}, we have
\[ \sum_{\substack{|\etamu| = |\lambdagamma| + 1 \\ \mu_1 \geq \, \dotsm \, \geq \mu_{n-k}}}  C^{\gammalambdamin}_{\etamu} \PP{\etamu} = \Psi^{-1} \left( \sum_{j=k+1}^n \left( H_j \dotsm H_{n-1} \Phi H_1 \dotsm H_{j-1} \, \Xi_j^{-1} \right) \ep \Estar{\gammalambdamin}\right). \]

Then taking $\Psi$ of both sides and once again using \cref{Tiactthesame} on the left side,
\begin{equation}\label{bothsides}  \sum_{\substack{|\etamu| = |\lambdagamma| + 1 \\ \mu_1 \geq \, \dotsm \, \geq \mu_{n-k}}}  C^{\gammalambdamin}_{\etamu} e^+ \Estar{\etamu} = \sum_{j=k+1}^n \left( H_j \dotsm H_{n-1} \Phi H_1 \dotsm H_{j-1} \, \Xi_j^{-1} \right) \ep \Estar{\gammalambdamin}.\end{equation}
This is the equation we will use to compute $C_{\etamu}^{\gammalambdamin}$. The first simplification will be using the relation (see  \cite{KnopSymmetric}),
\[ H_i \Xi_{i+1}^{-1} = \Xi^{-1}_i \overline{H_i},\]
where 
\[\overline{H_i} := \frac{(t-1)z_{i+1}}{z_i-z_{i+1}} + \frac{z_i-tz_{i+1}}{z_i-z_{i+1}} s_i.\]
This $\overline{H_i}$ is almost the inverse of $H_i$, but instead their multiplication is given by,
\[H_i \overline{H_i} = t.\] Importantly, if $f=s_i f$, then $\overline{H_i} f = f$. In particular, this means for $i\geq k+1$,
\[ \overline{H_i} \PB{\gammalambdamin} = \PB{\gammalambdamin}.\]
Now continuing with our calculations,
\begin{align*}
   \sum_{\substack{|\etamu| = |\lambdagamma| + 1 \\ \mu_1 \geq \, \dotsm \, \geq \mu_{n-k}}}  C^{\gammalambdamin}_{\etamu} e^+ \Estar{\etamu}  &=  \sum_{j=k+1}^n  H_j \dotsm H_{n-1} \Phi H_1 \dotsm H_{j-1} \, \Xi_j^{-1} \ep \Estar{\gammanu}\\  
    &=  \sum_{j=k+1}^n  H_j \dotsm H_{n-1} \Phi H_1 \dotsm H_{k} \, \Xi_{k+1}^{-1}  \ep \Estar{\gammanu}\\
    &= \sum_{j=k+1}^n H_j \dotsm H_{n-1} \Phi H_1 \dotsm H_{k} \, \Xi_{k+1}^{-1} \sum_{\nu \sim \lambda} \fnu \Estar{\gammanu} \\
    &=  \sum_{\nu \sim \lambda} \fnu  \sum_{j=k+1}^n H_j \dotsm H_{n-1} \Phi H_1 \dotsm H_{k} \, \Xi_{k+1}^{-1} \Estar{\gammanu}
\end{align*}

Note that the choice of $\mu^-$ to be weakly increasing guarantees no two nonsymmetric Macdonald polynomials in the left sum symmetrize to the same partially-symmetric Macdonald polynomial, so the coefficients $C_{\etamu}^{\gammalambdamin}$ are well-defined.
    \subsection{Expansion Support}
    In this section, we will determine the weights which appear with nonzero coefficients in the Pieri expansion. This process serves the dual purpose of setting up the final computation for the coefficients. The following result identifies all possible coefficients which could appear, and we will show in \cref{support} that every such coefficient is nonzero.
\begin{prop}\label{expansion}
In the Pieri expansion,
\[ (z_{k+1} + \dots + z_n)\PB{\gammalambdamin} = \sum_{\substack{|\etamu| = |\gammalambdamin| + 1 \\[1mm] \mu^-_1 \leq \, \dotsm \, \leq \mu^-_{n-k} }} C^{\gammalambdamin}_{\etamu} \PB{\etamu}, \]
we have $C^{\gammalambdamin}_{\etamu}\neq 0$ only if, for some $I_1=\{t_1, \dots, t_r\} \subseteq [1,k]$ with $t_1 < \, \dotsm \, < t_r$, and $\nu \in S_{n-k}(\lambda^-)$, the nonsymmetric part $\eta$ is
\begin{align*}
    &\eta_i = \gamma_i \hspace*{.55cm}\text{ if } i\notin I_1\\
    &\eta_{t_j}=\gamma_{t_{j+1}} \text{ for } 1\leq i \leq r \text{ where } \gamma_{t_{r+1}} = \nu_1,
\end{align*}
and the symmetric part $\mu^-$ satisfies
\begin{align*}
    (\mu_1^-, \dotsc, \mu_{n-k}^- ) \in S_{n-k}(\nu_2, \dotsc, \nu_{n-k}, \gammanu_{t_1}+1).
\end{align*}
In the case that $I_1 = \emptyset$, we have $\eta = \gamma$ and $\gammanu_{t_1} := \nu_1$.
\end{prop}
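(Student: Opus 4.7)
The strategy is to take the identity
\[\sum_{\etamu} C^{\gammalambdamin}_{\etamu}\, \ep \Estar{\etamu} = \sum_{\nu \sim \lambda} \fnu \sum_{j=k+1}^n H_j \dotsm H_{n-1}\, \Phi\, H_1 \dotsm H_k\, \Xi_{k+1}^{-1}\, \Estar{\gammanu}\]
derived in the previous subsection, expand the right-hand side in the interpolation basis $\{\Estar{\mu'}\}$, and identify which $\Estar{\mu'}$ survive after applying the outer symmetrizer $\ep$. This proposition only claims a necessary condition on the support, since nonvanishing of the coefficients is handled separately in \cref{support}; it therefore suffices to show that no $\etamu$ outside the claimed list can be produced.

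I would proceed operator by operator. The factor $\Xi_{k+1}^{-1}\Estar{\gammanu}$ is just a scalar multiple of $\Estar{\gammanu}$, since $\Xi_{k+1}^{-1}$ is diagonal on the interpolation basis. By \cref{Tiactthesame} together with \cref{TiFormulas}, each $H_i$ sends $\Estar{\mu'}$ into the $\Lq$-span of $\{\Estar{\mu'},\Estar{s_i\mu'}\}$, so iterating $H_k,H_{k-1},\dotsc,H_1$ produces a sum indexed by subsets $I \subseteq \{1,\dotsc,k\}$: for $I=\{t_1<\dotsm<t_r\}$, the corresponding composition is obtained from $\gammanu$ by applying $s_{t_r},s_{t_{r-1}},\dotsc,s_{t_1}$ in sequence. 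The operator $\Phi = (z_n - t^{-n+1})\Delta$ then cycles the composition and decrements the entry that becomes the new leading coordinate, as dictated by the eigenvalue identity \cref{eigentwo}. Finally, $\sum_{j=k+1}^n H_j\dotsm H_{n-1}$ adds adjacent transpositions entirely within the symmetric block $\{k{+}1,\dotsc,n\}$, and the outer symmetrizer $\ep$ collapses all such permutations of the symmetric coordinates into a single $\ep\Estar{\etamu}$ whose symmetric part is the weakly increasing rearrangement of its underlying multiset.

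Matching the combinatorics to the proposition: the subset $I$ from the inner sweep is identified with $I_1$, and the choice of $\nu$ from the outer sum remains $\nu$. After the sweep together with the $\Phi$-cycle, the entry originally at position $t_1$ (namely $\gamma_{t_1}$) is pushed into the symmetric block with an increment of $1$, the entry $\nu_1$ lands at position $t_r$ in the nonsymmetric block, and the intermediate $\gamma_{t_j}$'s cyclically shift to give $\eta_{t_j}=\gamma_{t_{j+1}}$, with $\eta_i=\gamma_i$ for $i\notin I_1$. The symmetric multiset then becomes $\{\nu_2,\dotsc,\nu_{n-k},\gamma_{t_1}+1\}$, whose weakly increasing rearrangement is $\mu^-$. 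The main obstacle is the precise handling of $\Phi$: while \cref{eigentwo} gives its cyclic-shift behavior at the eigenvalue level, verifying that the degree $|\gammalambdamin|+1$ component of the full operator chain is supported exactly on the claimed set of $\Estar{\mu'}$ requires careful tracking of lower-order contributions from $(z_n-t^{-n+1})\Delta$, especially when intermediate compositions have zero entries that affect the expansion of $\Phi\Estar{\mu'}$. A secondary bookkeeping difficulty is that the map $(I,\nu)\mapsto(\eta,\mu^-)$ is generally many-to-one after $\ep$-symmetrization, so one must check that the parametrization in the proposition captures exactly the surviving support, neither more nor fewer.
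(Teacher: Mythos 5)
Your overall strategy differs from the paper's, and it stalls exactly at the point you flag. The paper never expands the operator chain in the interpolation basis at all: it writes each $H_i$ as $a(z_i,z_{i+1})+b(z_i,z_{i+1})s_i$ with the $s_i$ acting on the \emph{variables}, so a term of the expansion is a rational function $p_{I_1}(z)p_2(z)$ times $\Estar{\gammanu}\bigl(I_2(I_1(z))\bigr)$ (an interpolation polynomial at a permuted/rescaled argument, not a new basis element), and then it evaluates the whole identity at $z=\overline{\etamu}$. The vanishing characterization \cref{vanishing} forces $I_2\bigl(I_1(\overline{\etamu})\bigr)=\overline{\gammanu}$, i.e.\ $\etamu=d_I\gammanu$, which is precisely the claimed support; $\Phi$ only ever enters through the eigenvalue identity \cref{eigentwo}. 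Your route instead requires knowing how $\Phi=(z_n-t^{-n+1})\Delta$ acts on the basis $\{\Estar{\mu'}\}$, namely that $\Phi\Estar{\nu}$ is a scalar multiple of $\Estar{(\nu_2,\dotsc,\nu_n,\nu_1+1)}$ with no lower-degree admixture. That fact is true (it is Knop's raising-operator property for the interpolation polynomials), but neither this paper nor your argument establishes or cites it; \cref{eigentwo} alone is an identity of eigenvalue vectors and does not control $\Phi$ on the basis. Since you explicitly defer "careful tracking of lower-order contributions" from $\Phi$, this is a genuine missing step rather than a detail: without it the inner sweep's output cannot be pushed through $\Phi$ in the basis, and the support claim does not follow.

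Two smaller points. First, your identification of the swap set with $I_1$ is backwards relative to the paper's conventions: in the expansion, $i\in I_1$ is where the coefficient $a(z_i,z_{i+1})$ (no $s_i$) is chosen, and it is the entries at positions \emph{in} $I_1$ that get cyclically shifted, while $\eta_i=\gamma_i$ for $i\notin I_1$; in a basis-tracking picture the branch that swaps the index at position $i$ corresponds to $i\notin I_1$. Because you range over all subsets this does not change the support as a set, but with your labeling the displayed formulas for $\eta$ would be attached to the wrong branches. Second, your use of \cref{Tiactthesame} to say $H_i\Estar{\mu'}\in\Lq\Spann\{\Estar{\mu'},\Estar{s_i\mu'}\}$ is fine, and your closing remark about the many-to-one collapse under $\ep$ is not actually an obstacle for a support-only statement, since distinct sorted symmetric parts give disjoint rearrangement classes of interpolation indices. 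If you supply the Knop raising-operator fact for $\Phi$, your approach does yield a clean alternative proof of this proposition; as written, it is incomplete where you say it is.
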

\begin{proof}

We first must analyze the actions of $H_i$ and $\Phi$ on interpolation nonsymmetric Macdonald polynomials. To simplify the formula for the $H_i$, define some functions, following \cite{Ba},
\[ a(x,y) := \frac{(t-1)x}{x-y}, \hspace*{.5cm} b(x,y) := \frac{x-ty}{x-y}.\]
Then write $H_i = a(z_i,z_{i+1}) + b(z_i,z_{i+1}) s_i$. To continue, note that in the full expression,
\begin{equation}\label{rightside}\sum_{\nu \sim \lambda} \fnu\left( \sum_{j=k+1}^n \left( H_j \dotsm H_{n-1} \Phi H_1 \dotsm H_{k} \, \Xi_{k+1}^{-1} \right)  \Estar{\gammanu}\right),\end{equation}
because $\Xi^{-1}_{k+1}$ is acting on an interpolation polynomial, it acts as a constant eigenvalue which depends only on $\gammanu$, and does not impact the support. So we are evaluating the resulting polynomials in the expansion of $H_j \dots H_{n-1} \Phi H_1 \dots H_k \Estarlong{\gammanu}$. 
Expanding all of the $H_i$ operators and $\Phi$, we obtain
\begin{align*} &  [a(z_j,z_{j+1})+b(z_j,z_{j+1}) s_j] \dotsm [a(z_{n-1},z_n)+b(z_{n-1},z_n) s_{n-1}]\,    (z_n-t^{-n+1})\Delta \, \\& \hspace*{1cm} \cdot [a(z_1, z_2)+b(z_1,z_2) s_1] \dotsm [a(z_k,z_{k+1}) + b(z_k,z_{k+1})s_k] \, \Estarlong{\gammanu}.
\end{align*}

Since $s_i$ permutes the variables $z_i$ and $z_{i+1}$, we have that the action of the simple reflections is $s_i\Estarlong{\gammanu} = \Estar{\gammanu}(s_i(z);q,t)$. While this does not change the index $\gammanu$, the argument $s_i(z)$ means this is no longer an interpolation nonsymmetric Macdonald polynomial. Finding a term after expanding the above product corresponds to choosing either $a(z_i,z_{i+1})$ or $b(z_i,z_{i+1}) s_i$ from each bracketed pair. In this way, choose two sets $I_1=\{t_1< \, \dotsm \, < t_r\} \subseteq [1,k]$ and $I_2=\{v_1< \, \dotsm \, < v_s\} \subseteq [j,n-1]$ so that if $i\in I_1$, we choose $a(z_i,z_{i+1})$, and if $i\notin I_1$, we choose $b(z_i,z_{i+1})s_i$, and the same for $I_2$. We will write
\[ w_{I_1} := \shatproduct \]
to mean the full product $s_1 \dotsm s_k$ with the simple reflections with a hat removed. Let $p_{I_1}(z)$ be the coefficient obtained from the terms in
\[ \Delta [a(z_1, z_2)+b(z_1,z_2) s_1] \dotsm [a(z_k,z_{k+1}) + b(z_k,z_{k+1})s_k] = \sum_{I_1\subseteq [1,k]} p_{I_1}(z) \Delta w_{I_1}.\]
Similarly, define the permutation
\[w_{I_2} := s_j \dotsm \widehat{s_{v_1}} \dotsm \widehat{s_{v_s}} \dotsm s_{n-1} \]
Let $p_{2}(z)$  be the coefficient obtained from terms in
\[ [a(z_j,z_{j+1})+b(z_j,z_{j+1}) s_j] \dotsm [a(z_{n-1},z_n)+b(z_{n-1},z_n) s_{n-1}] (z_n-t^{-n+1})=\sum_{I_2\subseteq[j,n-1]}p_2(z)w_{I_2}.\]
Notice that the $(z_n-t^{-n+1})$ term is part of $p_2$. Additionally, we call this $p_2$ rather than $p_{I_2}$ because in later computations, we will find that the Pieri coefficients do not depend on $I_2$. Using these definitions, we can write an expansion of $\displaystyle{H_j \dotsm H_{n-1} \Delta H_1 \dotsm H_k  \Estarlong{\gammanu}}$ as
\begin{equation}\label{Expandedright}
      \sum\limits_{\substack{I_1 \subseteq [1,k] \\ I_2 \subseteq [j, n-1]}} p_{I_1}(z) \cdot p_2(z)  \cdot \left( s_j \dotsm \widehat{s}_{v_1} \dotsm \widehat{s}_{v_s} \dotsm s_{n-1}\Delta \shatproduct \right) \Estar{\gammanu}(z), \end{equation}
or more succinctly as
   \[ \sum\limits_{\substack{I_1 \subseteq [1,k] \\ I_2 \subseteq [j, n-1]}} p_{I_1}(z) \cdot p_2(z)  \cdot \Estar{\gammanu}(I_2(I_1(z))),
\]
where $I_1(z)$ and $I_2(z)$ are defined by
\begin{align} \label{Ione}&(I_1(z))_m := \begin{cases} z_m  &\text{ if } m\notin I_1 \text{ and } m \leq k \\
z_{t_{\ell-1}} \hspace*{.5cm} &\text{ if } m=t_\ell \text{ and } \ell \neq 1\\
q^{-1} z_n &\text{ if } m=t_1, \text{ or } r=0 \text{ and }m=k+1 \\
z_{t_r} &\text{ if } m=k+1 \text{ and } r\neq 0\\
z_{m-1} &\text{ if } m > k+1 \end{cases}\\[5mm]
\label{Itwo}& (I_2(z))_m := \begin{cases} z_m  &\text{ if } m\leq k \\
z_{m+1} \hspace*{.5cm} &\text{ if } m-1 \notin I_2, \text{ and } k+1 \leq m \leq n-1\\
z_{j} &\text{ if } m=v_1+1\\
z_{v_{i-1}+1} &\text{ if } m=v_i+1 \text{ and } i \geq 2\\
z_{v_s +1} &\text{ if } m=n, \text{ and if } s=0, \text{ then } v_s+1=j \end{cases}.\end{align}

The actions of $I_1$ and $I_2$ on the indices of the variables $z_i$ can be described as products of disjoint cycles, along with $\Delta$, by considering the whole permutation as follows in cyclic notation:
\[ s_j \dotsm \widehat{s}_{v_1} \dotsm \widehat{s}_{v_s} \dotsm s_{n-1}\Delta \shatproduct \]
\[ = (j \dots v_1) (v_1+1 \dots v_2) \dotsm (v_s+1 \dots n) \Delta (1 \dots t_1) (t_1+1 \dots t_2) \dotsm (t_r+1 \dots k+1).\]
This whole product can almost be combined into one long cycle,
\[ (v_s+1, v_{s-1}+1, \dots, v_1+1, j, j-1, \dots, k+2, k+1, t_r, t_{r-1}, \dots, t_1).\]
However, the cycling of $t_1$ to $v_s+1$ is special, since $I_2(I_1(z_{t_1})) = q^{-1} z_{v_s+1}$.

It is easier to work with the compositions $\gammanu$ than the lists of eigenvalues $\overline{(\eta \mid \mu)}$, so we would like to find an action $\dI$ for which 
\[\dI \gammanu=(\eta \mid \mu) \quad \text{ if and only if } \quad  I_2\left(I_1\left(\overline{(\eta \mid \mu)}\right) \right) = \overline{\gammanu}.\] The $I$ in $\dI$ should be thought of as $I = I_1 \cup I_2$. The following definition of $d_I$ is derived by repeated application of \cref{eigenvaluepermutations} and one application of \cref{eigentwo}. Note that the hypothesis in \cref{eigenvaluepermutations} that $\mu_i \neq \mu_{i+1}$ means the property above only holds if $I_1$ and $I_2$ do not include the permutation $s_i$ if it would permute two columns of the same height. For now we take this for granted, but we will see in \cref{comaximality} that we are only interested in $I_1$ which have this property, and the same will be shown explicitly for $I_2$ in the proof of \cref{Pieriformula} when uniqueness of $I_2$ is established.

Assuming $I_1 \neq \emptyset$, the following definition of $\dI$ satisfies this property, and the horizontal bar separates the nonsymmetric and symmetric parts of $\dI\gammanu$:

\[ (\dI\gammanu)_m := \left\{\begin{tabular}{l l} $ \left.\begin{array}{l l} \gammanu_m  &\text{ if } m\leq k \text{ and } m\notin I_1 \\
\gammanu_{k+1} \hspace*{.7cm} &\text{ if } m=t_r\\
\gammanu_{t_{i+1}} &\text{ if } m=t_i \text{ and } i\neq r\end{array} \hspace{2.85cm} \right\} \, m \leq k$ \\
\underline{\hspace{10cm}}\\[2mm] $\left.\begin{array}{l l} \gammanu_{m+1} &\text{ if } k+1 \leq m < j\\
\gammanu_m &\text{ if } m>j \text{ and } m-1\notin I_2\\
\gammanu_{v_1+1} &\text{ if } m=j \text{ and } s\neq 0\\
\gammanu_{v_{i+1}+1} &\text{ if } m=v_i+1 \text{ and } 1 \leq i < s\\
(\gammanu_{t_1}) + 1&\text{ if } m=v_s+1 , \text{ or }  s=0 \text{ and } m = j \hspace*{.3cm} \end{array} \right\} \, m \geq k+1 $ \end{tabular}  \right.\\[5mm]\]

If $I_1 = \emptyset$, the entry $(\gammanu_{t_1})+1$ in the last row should be replaced by $(\gammanu_{k+1})+1$. The constructions so far allow us to compute the partially-symmetric Macdonald polynomials which appear in the Pieri expansion, by considering evaluations of \cref{rightside}. First, we rewrite that equation using the operator expansions we have found:
\begin{equation}\label{newrightside}
    \sum_{\nu \sim \lambda} \fnu\left( \sum_{j=k+1}^n    \sum\limits_{\substack{I_1 \subseteq [1,k] \\ I_2 \subseteq [j, n-1]}} p_{I_1}(z) \cdot p_2(z)  \cdot \Estar{\gammanu}(I_2(I_1(z))) \right).
\end{equation} If we evaluate the expression at $z=\overline{\etamu}$, by the vanishing properties of the interpolation polynomials, we only get a nonzero term if $I_2\left( I_1 \left( \overline{\etamu} \right) \right) = \overline{\gammanu}$. So equivalently, this vanishes unless $\etamu = d_I\gammanu$ for some sets $I_1$ and $I_2$. From the definition of $d_I$, this happens when $\eta$ satisfies
\begin{align}
    &\eta_i = \gamma_i \hspace*{.55cm}\text{ if } i\notin I_1 \label{part1}\\
    &\eta_{t_j}=\gamma_{t_{j+1}} \text{ for } 1\leq i \leq r \text{ where } \gamma_{t_{r+1}} = \nu_1\label{part2}
\end{align}
and $\mu^-$ has
\begin{align}
    (\mu_1^-, \dotsc, \mu_{n-k}^- ) \in S_{n-k} (\nu_2, \dotsc, \nu_{n-k}, \gammanu_{t_1}+1).\label{part3}
\end{align}
Again we have that if $I_1 = \emptyset$, then $t_1$ should be considered as $k+1$. \\

Then take the formula found earlier,
\[    \sum_{\substack{|\etamu| = |\lambdagamma| + 1 \\ \mu_1 \geq \, \dotsm \, \geq \mu_{n-k}}}  C^{\gammalambdamin}_{\etamu} e^+ \Estar{\etamu}  =  \sum_{\nu \sim \lambda} \fnu  \sum_{j=k+1}^n  H_j \dotsm H_{n-1} \Phi H_1 \dotsm H_{k} \, \Xi_{k+1}^{-1}  \Estar{\gammanu}.\]

Due to \cref{newrightside}, the right side vanishes when evaluated at $\overline{\etamu}$ unless $\etamu$ meets the above criteria. We know from \cref{Tiactthesame} that $e^+ \Estar{\etamu}$ expands in the same way as $ \PB{\etamu}$. So the left side must also vanish for $\etamu$ which don't meet those conditions. Because both sides are $\K$-linear combinations of interpolation polynomials,  using \cref{vanishing}, the only possible $\Estar{\etamu}$ which can appear with nonzero coefficients on the left are indexed by those same nonvanishing $\etamu$.

The order of $\mu^-$ is unimportant, since any permutation of $\mu^-$ contributes to the same $\PB{\etamu}$. Altogether, we conclude that the $\PB{\etamu}$ which appear in the expansion of $(z_{k+1} + \dotsm + z_{n})\PB{\gammanu}$ are those which satisfy \cref{part1}, \cref{part2}, and \cref{part3}. \qedhere \end{proof}

 With this all constructed, we make the following notes:
\begin{itemize}
    \item The action of $\dI$ increases the height of exactly one column, which is found in $\mu^-$.
    \item Because $I_2$ permutes only the entries with positions in $[k+1,n]$, which will all symmetrize to the same partially-symmetric Macdonald polynomial, the action of $I_2$ does not change which terms appear as basis elements in the Pieri expansion.
\end{itemize}

We can now restrict the possible subsets $I_1$ which contribute to the Pieri-type coefficient formula.

\begin{lem}\label{comaximality}Let $(\eta \mid \mu)$ be a composition such that $\overline{\gammanu} = I_2\left( I_1 \left(\overline{(\eta \mid \mu)} \right)\right)$ for some composition $\gammanu$ and sets $I_1\subseteq [1,k]$ and $I_2 \subseteq [j,n-1]$. Then the $I_1$ and $I_2$ which satisfy that equation are unique.
\end{lem}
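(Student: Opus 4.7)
The plan is to decouple the determination of $I_1$ and $I_2$ by exploiting that $I_2 \subseteq [j,n-1]$ fixes every coordinate in $[1,k]$, so the action on the first $k$ positions is controlled by $I_1$ alone. As a preliminary, I would record that all entries of $\overline{\nu}$ are pairwise distinct for any composition $\nu$: when $\nu_i \neq \nu_j$ the $q$-exponents of $\overline{\nu}_i$ and $\overline{\nu}_j$ already differ, and when $i<j$ with $\nu_i = \nu_j$, a direct comparison of the sums defining $l'_\nu$ shows $l'_\nu(j) > l'_\nu(i)$ because $i$ is counted in the $\geq$-sum at position $j$ while $j$ is not counted in the $>$-sum at position $i$. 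Hence both $\overline{\etamu}$ and $\overline{\gammanu}$ have pairwise distinct entries.

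For each $m \leq k$, the equation $\overline{\gammanu} = I_2(I_1(\overline{\etamu}))$ reduces to $\overline{\gammanu}_m = I_1(\overline{\etamu})_m$ because $I_2$ acts trivially on $[1,k]$. Reading the cases in \cref{Ione}, the right side is either $\overline{\etamu}_m$ (if $m \notin I_1$), or $\overline{\etamu}_{t_{\ell-1}}$ if $m = t_\ell \in I_1$ with $\ell \geq 2$, or $q^{-1}\overline{\etamu}_n$ if $m = t_1$. Since the entries of $\overline{\etamu}$ are pairwise distinct and $q$ is a formal variable in $\mathbb{Q}(q,t)$, these three possibilities are mutually distinguishable at each $m \leq k$. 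Comparing with $\overline{\gammanu}_m$ thus determines whether $m \in I_1$ and, if so, its position in the ordering $t_1 < \cdots < t_r$; so $I_1$ is uniquely recovered.

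With $I_1$ fixed, the vector $y := I_1(\overline{\etamu})$ is explicit, and the residual equation $I_2(y)_m = \overline{\gammanu}_m$ for $k+1 \leq m \leq n$ asks only for a position permutation on $[k+1,n]$. Both $y$ and $\overline{\gammanu}$ retain distinct entries in these positions, so the permutation is unique, and from the cycle structure in \cref{Itwo} one reads off $j$ as the smallest index in $[k+1,n]$ where a nontrivial move occurs and $\{v_1,\ldots,v_s\} = I_2$ as the indices in $[j,n-1]$ that are fixed by the adjacent transpositions in the cycle decomposition. This pins down both $j$ and $I_2$.

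The main obstacle is controlling the isolated $q^{-1}$ factor coming from $\Delta$ in the $I_1$ step: an accidental equality $q^{-1}\overline{\etamu}_n = \overline{\etamu}_m$ could a priori create ambiguity in the step-one identification. Such a relation would force the rigid conditions $(\eta\mid\mu)_n - 1 = (\eta\mid\mu)_m$ together with $l'_{\etamu}(n) = l'_{\etamu}(m)$; even if these hold, the full ordered cycle structure of $I_1$ must match simultaneously at every $t_\ell$, which forbids an alternate valid assignment. Because $q$ and $t$ are transcendental over $\mathbb{Q}$, no further accidental collisions among the resulting $q^a t^b$-monomials can arise, and the uniqueness conclusion follows.
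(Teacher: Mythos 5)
Your overall strategy---recover $I_1$ entry by entry on the positions $[1,k]$, where $I_2$ acts trivially, and then recover $I_2$ from the remaining positions using distinctness of spectral entries---is reasonable and is genuinely different from the paper, which disposes of the lemma by citing \cref{eigenvaluepermutations} together with the argument in Knop's Lemma~4.3. However, your argument has a real gap at exactly the point you flag. The recovery of $I_1$ rests on the claim that at each $m\le k$ the candidate values $\overline{\etamu}_m$, $\overline{\etamu}_{t_{\ell-1}}$ and $q^{-1}\overline{\etamu}_n$ are pairwise distinguishable. Pairwise distinctness of the entries of $\overline{\etamu}$ handles the first two, but the possible coincidence $q^{-1}\overline{\etamu}_n=\overline{\etamu}_m$ is never actually excluded: the appeal to transcendence of $q,t$ is vacuous, since every quantity in sight is a monomial $q^a t^b$ and a coincidence is precisely an equality of integer exponents, which transcendence does not forbid; and the sentence ``the full ordered cycle structure of $I_1$ must match simultaneously at every $t_\ell$, which forbids an alternate valid assignment'' is the uniqueness statement itself, asserted rather than proved. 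If such a coincidence occurred, your position-by-position identification would be ambiguous at the first element of $I_1$ (``$m\notin I_1$'' and ``$m=t_1$'' would predict the same value), and nothing in your write-up rules out two globally consistent completions.

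The gap is fillable, but only by an honest computation with the statistic $l'$ from \cref{eigenvalueformula}: if $\etamu_m=\etamu_n-1$ with $m<n$, then every index $p<n$ with $\etamu_p\ge \etamu_n$ that is counted in $l'_{\etamu}(n)$ is also counted in $l'_{\etamu}(m)$ (since $\etamu_p\ge\etamu_n>\etamu_m$), while $n$ itself is counted in $l'_{\etamu}(m)$ but not in $l'_{\etamu}(n)$; hence $l'_{\etamu}(m)\ge l'_{\etamu}(n)+1$ and therefore $q^{-1}\overline{\etamu}_n\neq\overline{\etamu}_m$ for every $m\le k$. With that in hand your induction does determine $I_1$ uniquely, and the second step then determines $I_2$ for the given $j$, since both vectors have pairwise distinct entries in positions $k+1,\dots,n$ and the interval blocks of $w_{I_2}$ are recoverable from the resulting position permutation. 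Note, though, that $j$ itself cannot be read off as ``the smallest index where a nontrivial move occurs'': when $I_2=[j,n-1]$ the permutation on $[k+1,n]$ is trivial, so you should treat $j$ as fixed data, which is how the lemma and its later use in \cref{Pieriformula} treat it.
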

\begin{proof}
This is a consequence of \cref{eigenvaluepermutations}, and is proved in \cite[proof of Lemma 4.3] {KnopSymmetric}.
\end{proof}

We will work more with $I_2$ in the proof of \cref{Pieriformula}, but for now we define $I_1$ which is the unique set in the lemma above.

\begin{dfn}\label{maximality}
We call $I_1$ maximal with respect to $(\gamma \mid \nu)$ and $I_2$ if there is some $(\eta \mid \mu)$ for which $\overline{(\gamma \mid \nu)} = I_2 \left( I_1 \left( \overline{(\eta \mid \mu)} \right)\right)$.
\end{dfn}
In explicit terms, $I_1$ is maximal with respect to $\gammanu$ and if it satisfies:
\begin{enumerate}
    \item $\gamma_i \neq \gamma_{t_1}$ for all $i< t_1$, and if $I_1=\emptyset$, consider $\gammanu_{t_1} := \nu_1$ \item $\gamma_i \neq \gamma_{t_{u+1}}$ for all $t_u+1 \leq i \leq t_{u+1}-1$
\end{enumerate}

Note that the set $I_1$ which satisfies the above crieria is called \textit{comaximal} with respect to $\gammanu$ in \cite{Ba}. We also omit the reference to $I_2$ in maximality where it is not needed. We will show after computing the coefficients $\pIone$ and $\pItwo$ that for a particular choice of $I_2$ and maximal $I_1$, the coefficient $C^{\gammalambdamin}_{\etamu}$ is nonzero. Finally, we give notation for the support set.

\begin{dfn}
Define $\MM$ to be the set of compositions which appear in \cref{expansion},
\begin{align*} \MM := \{\, \etamu \hspace*{.2cm} \Big| \hspace*{.2cm} \dI &\gammanu = \etamu \text{ for some } \nu \in S_{n-k}(\lambda^-), \text{ some maximal set } I_1 \\ & \text{ and some } \mu^- \in S_{n-k}(\nu_2, \dotsc, \nu_{n-k}, \gammanu_{t_1}+1)\} .\end{align*}
\end{dfn}

    \subsection{Pieri coefficient computation}\label{Piericomputation}
    Let us return to the equation which we will use to determine the Pieri coefficients:
\begin{align}\label{evaluatethis}\sum_{|\etamu| = |\gammalambdamin| + 1} &C^{\gammalambdamin}_{\etamu} \ep \Estar{\etamu}(z)\nonumber \\ = &\sum_{\nu \sim \lambda} \fnu \left( \sum_{j=k+1}^n \left( H_j \dotsm H_{n-1} \Phi H_1 \dotsm H_{k} \, \Xi_{k+1}^{-1} \right)  \Estar{\gammanu}(z)\right)\end{align}

Our approach will be to evaluate both sides at a particular value for $z$, and solve the equation for $C_{\etamu}$. First, use the expansion of $\ep \Estar{\etamu}(z)$ into nonsymmetric interpolation polynomials,
\begin{align}\label{evaluationpoint}\sum_{\etamu}  C^{\gammalambdamin}_{\etamu} &\sum_{\mu' \sim \mu} \gmuprime \Estar{\etamuprime}(z)\nonumber\\
&= \sum_{\nu \sim \lambda} \fnu \left( \sum_{j=k+1}^n \left( H_j \dotsm H_{n-1} \Phi H_1 \dotsm H_{k} \, \Xi_{k+1}^{-1} \right)  \Estar{\gammanu}(z)\right)\end{align}

By the vanishing property of interpolation polynomials, evaluating at a vector of eigenvalues $\overline{\etamuprime}$ will cause all but one term to vanish on the left side. We will choose a particular value of $\mu'$ that we call $\mutilde$ which simplifies our remaining computations. Starting with a vector $\gammalambdamin$, we begin by choosing some $\etamu$ in $\MM$, which determines a unique $I_1$ that is maximal with respect to $\gammalambdamin$. Notice that going from $\gammalambdamin$ to $\etamu$, exactly one column height from $\lambda^-$ is removed, and one column has its height increased in the creation of $\mu^-$. This can be read directly from $\lambda^-$ and $\mu^-$ unless $\lambda^-=\mu^-$. If this is the case, both the removed $\lambda^-$ column and the increased $\mu^-$ column have a height $\gamma_i$ where $i$ is the largest value for which $\gamma_i \neq \eta_i$.

Then for the choice of which $\mutilde$ to use in the evaluation, we require all columns with the same height as the increased column in $\mu^-$ to be at the beginning of the vector. The remaining entries are chosen to be weakly decreasing. As a consequence of the choice of $\mutilde$, we will see later that there is only one contributing $\nu$ in the sum indexed by $\nu$. We call this special composition $\lambdatilde$, which is given by
\[(\lambdatilde_1,  \dotsc, \lambdatilde_{n-k}) = (\eta_{t_r}, \mutilde_2, \dotsc, \mutilde_{n-k}) \]
if $I\neq \emptyset$, and if $I = \emptyset$,
\[ (\lambdatilde_1, \dotsc, \lambdatilde_{n-k}) = (\mutilde_1-1, \mutilde_2, \dotsc, \mutilde_{n-k}).\]
It is useful to define a value,
 \[\mIone := |\{ 1\leq i \leq n-k \mid \, \mutilde_i = \mutilde_1  \}|-1. \]
This is the same as the number of entries in $(\lambdatilde_2,\dotsc, \lambdatilde_{n-k})$ which are the same height as the newly increased column. We now have enough to write the coefficients in the Pieri expansion.

\begin{thm}\label{Pieriformula}
The product $e_1[z_{k+1},\dotsc, z_n] \PB{\gammalambdamin}$ can be written as a sum of partially-symmetric Macdonald polynomials with the formula,
\[ (z_{k+1} + \dotsm + z_n)\PB{\gammalambdamin} = \sum_{\etamu \in \MM} C^{\gammalambdamin}_{\etamu} \PB{\etamu},\]
where
\[C^{\gammalambdamin}_{\etamu} = \frac{f_{\lambdatilde,\lambdagamma}}{\fmutilde} \cdot \overline{\gammalambdatilde}_{k+1} \cdot p_{I_1} \cdot p_{2}\cdot \frac{\Estar{\gammalambdatilde}\left(\overline{\gammalambdatilde} \right)}{\Estar{\etamutilde} \left( \overline{\etamutilde}\right)}, \]
 the symmetric parts $\mutilde$ and $\lambdatilde$ are those described above, the set $I_1=\{t_1 < \, \dotsm \, < t_r\}$ is maximal with respect to $\gammalambdatilde$, $m := \mIone$, and $p_{I_1}$ and $p_{2}$ are given by
\begin{align*}
    p_{I_1}     &= \frac{(t-1)q^{-1} \overline{\mutilde}_{m+1}}{q^{-1} \overline{\mutilde}_{m+1}-\overline{\eta}_{t_1}} \prod_{u=1}^{r-1} \left(\frac{(t-1) \overline{\eta}_{t_u}}{ \overline{\eta}_{t_u}-\overline{\eta}_{t_{u+1}}}\right)  \prod_{j=1}^{t_1-1} \left( \frac{q^{-1} \overline{\mutilde}_{m+1} - t \overline{\eta}_j}{q^{-1} \overline{\mutilde}_{m+1} -  \overline{\eta}_j} \right) \prod_{u=1}^{r} \prod_{j=t_u+1}^{t_{u+1}-1}  \left(\frac{\overline{\eta}_{t_u} - t \overline{\eta}_j}{ \overline{\eta}_{t_u} -  \overline{\eta}_j} \right)\\[3mm]
    p_{2} &= \frac{1-t^{m+1}}{1-t}\cdot (\overline{\mutilde}_{m+1} - t^{-n+1}) \cdot \prod_{j=m+2}^{n-k} \frac{\overline{\mutilde}_{m+1} - t \cdot \overline{\mutilde}_j}{\overline{\mutilde}_{m+1} - \overline{\mutilde}_j}.
\end{align*}
\end{thm}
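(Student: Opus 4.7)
The plan is to specialize both sides of equation \cref{evaluationpoint}, namely
\[
\sum_{\etamu}  C^{\gammalambdamin}_{\etamu} \sum_{\mu' \sim \mu} \gmuprime \Estar{\etamuprime}(z) = \sum_{\nu \sim \lambda} \fnu \sum_{j=k+1}^n H_j \dotsm H_{n-1} \Phi H_1 \dotsm H_{k} \Xi_{k+1}^{-1} \Estar{\gammanu}(z),
\]
at the distinguished point $z = \overline{\etamutilde}$, use the vanishing property of the interpolation polynomials to collapse both sides to one surviving term, and then solve for $C^{\gammalambdamin}_{\etamu}$.

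For the LHS, the vanishing criterion \cref{vanishing} forces $\Estar{\etamuprime}(\overline{\etamutilde})=0$ for every $\etamuprime \neq \etamutilde$ with $|\etamuprime| \le |\etamutilde|$. Since the outer sum is taken over partially symmetric indices $\etamu$ and every interior $\etamuprime$ shares that total degree, only the index whose symmetric part lies in $S_{[1,n-k]}(\mutilde)$ can contribute, and within that index only $\mu' = \mutilde$ survives. Thus the LHS reduces to $C^{\gammalambdamin}_{\etamu}\cdot \fmutilde \cdot \Estar{\etamutilde}(\overline{\etamutilde})$.

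For the RHS, we combine the operator expansion derived in the proof of \cref{expansion} with the eigenvalue relation $\Xi_{k+1}^{-1}\Estar{\gammanu} = \overline{\gammanu}_{k+1}\Estar{\gammanu}$, obtaining, after evaluation at $\overline{\etamutilde}$,
\[
\sum_{\nu \sim \lambda} \fnu \cdot \overline{\gammanu}_{k+1} \cdot \sum_{j,I_1,I_2} p_{I_1}(\overline{\etamutilde})\, p_2(\overline{\etamutilde})\, \Estar{\gammanu}\bigl(I_2(I_1(\overline{\etamutilde}))\bigr).
\]
By the vanishing property again, the inner evaluation is nonzero only when $I_2(I_1(\overline{\etamutilde})) = \overline{\gammanu}$, i.e.\ when $\dI\gammanu = \etamutilde$. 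The key uniqueness arguments are: (i) by the special construction of $\mutilde$ (with the newly increased column value grouped at the front), the only $\nu \in S_{n-k}(\lambda^-)$ for which some valid $I_1,I_2,j$ exists is $\nu = \lambdatilde$; (ii) by \cref{comaximality}, the set $I_1$ is uniquely determined as the maximal set with respect to $\gammalambdatilde$; and (iii) the analogous uniqueness for $I_2$ and $j$, once one establishes that $I_2$ cannot permute two columns of equal height (an input required by \cref{eigenvaluepermutations}), forces a single nonzero contribution per symmetric orbit — with the resulting multiplicities over equal columns accounting precisely for the factor $(1-t^{m+1})/(1-t)$ appearing in the stated $p_2$.

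After this reduction, the explicit expressions for $p_{I_1}$ and $p_2$ follow by expanding the operator products using $H_i = a(z_i,z_{i+1}) + b(z_i,z_{i+1})s_i$ and $\Phi = (z_n - t^{-n+1})\Delta$, selecting the $a$-factor at indices in $I_1 \cup I_2$ and the $b$-factor elsewhere, substituting $z = \overline{\etamutilde}$, and using \cref{eigentwo} to handle the cycling induced by $\Delta$. Dividing both sides by $\fmutilde \cdot \Estar{\etamutilde}(\overline{\etamutilde})$ and using $f_{\lambdatilde,\lambdagamma} = \flambdatilde$ yields the claimed formula for $C^{\gammalambdamin}_{\etamu}$. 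The main obstacle I anticipate is item (iii) above: carefully tracking uniqueness of $(j,I_2)$ modulo the orbit of equal columns in $\mutilde$, and verifying that the combined contribution of the non-unique choices telescopes into the single factor $\frac{1-t^{m+1}}{1-t}$ that fuses with the eigenvalue-at-$\overline{\mutilde}_{m+1}$ and the $(z_n - t^{-n+1})\Delta$ piece to produce the stated closed form for $p_2$.
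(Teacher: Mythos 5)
Your proposal is correct and takes essentially the same route as the paper's proof: evaluate at $\overline{\etamutilde}$, use the vanishing property to collapse the left side to $C^{\gammalambdamin}_{\etamu}\,\fmutilde\,\Estar{\etamutilde}(\overline{\etamutilde})$, show that on the right only $\nu=\lambdatilde$ and the unique maximal $I_1$ survive, and let the residual freedom in $j$ (with $I_2=[j,\dotsc,k+m]$ forced, each $H_\ell$ acting by $t$ on equal-height columns) produce exactly the factor $\frac{1-t^{m+1}}{1-t}$ in $p_2$ before expanding the operators into $a$- and $b$-factors. The obstacle you flag in item (iii) is resolved precisely as you anticipate, so no genuinely different argument or gap is present.
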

\begin{proof}The remainder of the derivation proceeds by evaluating both sides of \cref{evaluatethis} at $\overline{\etamutilde}$. Because the sums in the left side were constructed to have no overlapping terms, evaluation of the left side of the equation gives
\[ \sum_{\etamu} C^{\gammalambdamin}_{\etamu} \sum_{\mu' \sim \mu} f_{\mu',\etamu} \cdot \Estar{\etamuprime}\left(\overline{(\eta \mid \widetilde{\mu})}\right)= C^{\gammalambdamin}_{\etamu} \, \gmutilde \,\Estar{\etamutilde} \left( \overline{\etamutilde}\right). \]

Before evaluating the right side of \cref{evaluationpoint}, using the fact that $\Xi^{-1}_{k+1}$ acts as an eigenvalue, we make the simplification
\begin{align*}
    H_j \dotsm H_{n-1} \Phi H_1 \dotsm H_{k} \, \Xi_{k+1}^{-1}   \Estar{\gammanu}(z) 
    &=\overline{\gammanu}_{k+1} \left( H_j \dotsm H_{n-1} \Phi H_1 \dotsm H_{k} \right)  \Estar{\gammanu}(z).
\end{align*}

It remains to simplify the evaluation,
\begin{equation}\label{unsimplifiedsum} \sum_{\nu \sim \lambda} \fnu \cdot \overline{\gammanu}_{k+1} \left( \sum_{j=k+1}^n   \left( H_j \dotsm H_{n-1} \Phi H_1 \dotsm H_{k} \Estar{\gammanu} \right)\left(\overline{\etamutilde}\right)\right).\end{equation}

Expanding the inner sum indexed by $j$ with \cref{Expandedright}, we must then simplify
\[  \sum_{j=k+1}^n \,   \sum_{I_2\subseteq [j, n-1]} \sum_{I_1 \subseteq [1,k]}\pIoneevaluated \cdot \pItwoevaluated \cdot \Estar{\gammanu}\left(I_2\left(I_1\left(\overline{\etamutilde}\right)\right)\right).\]

The goal at this point is to find which compositions $\gammanu$ and sets $I_1$ and $I_2$ can be used to satisfy
\[\overline{\gammanu} = I_2\left(I_1\left(\overline{\etamutilde}\right)\right). \]
In other words, we must find which compositions and sets satisfy
\[\dI\gammanu = \etamutilde. \]
The nonsymmetric part $\eta$ has already been found in the support computations. Recall that there is only one $I_1$ such that $p_{I_1}\left(\overline{\etamutilde}\right) \neq 0$ by \cref{comaximality}. That leaves us with the task of finding possible values for $\nu$ and $I_2$.
 Note that $\mutilde$ has entries equal to $\mutilde_1$ in its first $\mIone+1$ positions. From this point on, we will use $m$ and $\mIone$ interchangeably. We will now make several observations which reduce the sums to only one composition $\nu=\lambdatilde$ and set $I_2=[j, \dotsc,k+m]$. 
\begin{itemize}
    \item $\nu_1$ must be $\eta_{t_r}$, which was found when constructing $\etamu$, if $I \neq \emptyset$. And if $I=\emptyset$, then $\nu_1$ is the increased column.
    \item Since $\mutilde_1 = \gammanu_{t_1}+1$, by the definition of $\dI$, we get $(\dI \gammanu)_{v_s+1} = \mutilde_1$. So since $\dI \gammanu = \etamutilde$ only has $\mutilde_1$ in positions $k+1, \dots, k+m+ 1$, we can conclude $v_s+1 \leq k+ m + 1$. And because $v_s$ is the highest value in $I_2$, that means any value in $\{k+m+1, \dotsc, n-1\}$ cannot appear in $I_2$. So $I_2 \subseteq [j, \dotsc, k+m]$.
    \item Again using the definition of $d_I$, we have that $(\dI \gammanu)_i = \etamutilde_i = \gammanu_i$ for $i\geq k+m+2$, since this means $i-1 \notin I_2$. Equivalently, $\nu_i = \mutilde_i$ for $i\geq m+2$. These are exactly the entries of $\mutilde$ which are not equal to $\mutilde_1$. The only entries not accounted for in $\nu$ are the ones equal to $\mutilde_1$, so $\nu_i = \mutilde_1$ for all $2\leq i \leq m + 1$.  Altogether,
    \[ (\nu_2, \dotsc, \nu_{n-k}) = (\mutilde_2, \dotsc, \mutilde_{n-k}).\]
    \item If $j>k+m + 1$, then from the definition of $\dI$, on one hand, \[(\dI \gammanu)_{k+m+1}=\gammanu_{k+m+2} \neq \mutilde_1.\]
    On the other hand, we have
    \[ (\dI \gammanu)_{k+m+1} = \etamutilde_{k+m + 1} = \mutilde_{m+1} = \mutilde_1.\]
    So $j\leq k+m+1$.
    \item Suppose $\ell \notin I_2$ and $j \leq \ell \leq k+m$, and choose the smallest such $\ell$. That would mean we take the part of $H_\ell$ that acts as $b(z_\ell, z_{\ell+1}) s_\ell$. Since $\etamutilde_\ell = \etamutilde_{\ell+1}=\mutilde_1$, by the definition of the eigenvalues, we have $\overline{\etamutilde}_\ell = t\cdot \overline{\etamutilde}_{\ell+1}$. Therefore the coefficient $p_2$ will include the term 
    \[b\left(\overline{\etamutilde}_{\ell}, \overline{\etamutilde}_{\ell+1} \right) =\frac{\overline{\etamutilde}_\ell- t\cdot \overline{\etamutilde}_{\ell+1}}{\overline{\etamutilde}_\ell- \overline{\etamutilde}_{\ell+1}} = 0.\] Thus for the coefficient to be nonzero, $H_\ell$ must act as $a(z_\ell,z_{\ell+1})$, which means $I_2 = [j, \dotsc, k+m]$. Note that if $j=k+m+1$, then $I_2 = \emptyset$.
\end{itemize}

The above observations allow us to conclude that for it to be possible that $\dI \gammanu = \etamutilde$, we must have $\nu = \lambdatilde$ and $I_2 = [j, \dotsc, k+m]$. \\

$H_\ell$ must act as $a(z_\ell,z_{\ell+1})$ for each $j \leq \ell \leq k+m$, and the columns $\ell$ and $\ell+1$ have the same height, so we have $\overline{z}_\ell=t\cdot \overline{z}_{\ell+1}$. After evaluation, the actions simplify to
\[ H_\ell = a(\overline{z}_\ell, \overline{z}_{\ell+1}) = \frac{(t-1) \overline{z}_\ell}{\overline{z}_\ell-\overline{z}_{\ell+1}} = t.\]
Taking all possible values of $j$ together,
\[ \sum_{j={k+1}}^{k+m+1} H_j \dotsm H_{k+m}\quad  \text{ acts by }\quad \sum_{i=0}^{m} t^i.\]

We will consider the above sum to be part of $\pItwo$. With all the simplifications up to this point, as well as rewriting the coefficient $f_{\nu, \gammalambdamin}$ with the corresponding specific value in our notation $f_{\lambdatilde, \lambdagamma}$, we can now rewrite \cref{unsimplifiedsum} as
\[ \flambdatilde \cdot \overline{\gammalambdatilde}_{k+1} \cdot    \pIoneevaluated \cdot \pItwoevaluated \cdot \Estar{\gammanu}\left(I_2\left(I_1\left(\overline{\etamutilde}\right)\right)\right).\]

The argument $\left(I_2\left(I_1\left(\overline{\etamutilde}\right)\right) \right)$ can just be written as $\overline{\gammalambdatilde}$ at this point. Now all that remains is computing explicit formulas for $\pIone$ and $\pItwo$. We have already done most of the work to compute $\pItwo$, since we know how each of $H_j,\dots, H_{n-1}$ act. So we can make some simplifications, \begin{align*}\sum_{j=k+1}^n &H_j \dotsm H_{n-1}(z_n-t^{-n+1})\\
  &= \left( \sum_{i=0}^{m} t^i \right) H_{k+m+1} \dotsm H_{n-1} (z_n-t^{-n+1}) \\
&= \left( \sum_{i=0}^{m} t^i \right)[b(z_{k+m+1},z_{k+m+2}) s_{k+m+1}] \dotsm [b(z_{n-1},z_n) s_{n-1}](z_n-t^{-n+1}) \\
&= \left( \sum_{i=0}^{m} t^i \right)\left(b(z_{k+m+1},z_{k+m+2}) \dotsm b(z_{k+m+1}, z_{n})\right) (z_{k+m+1}-t^{-n+1}) s_{k+m+1} \dotsm s_{n-1}\end{align*}

This means that the formula for $\pItwo$ is:
\begin{align*} \pItwo &= \left( \sum_{i=0}^{m} t^i \right) (\overline{z}_{m+k+1}-t^{-n+1}) (b(\overline{z}_{k+m+1},\overline{z}_{k+m+2}) \dotsm b(\overline{z}_{k+m+1}, \overline{z}_{n}))\\
\pItwoevaluated&=\frac{1-t^{m+1}}{1-t}\cdot (\overline{\mutilde}_{m+1} - t^{-n+1}) \cdot \prod_{j=m+2}^{n-k} \frac{\overline{\mutilde}_{m+1} - t \cdot \overline{\mutilde}_j}{\overline{\mutilde}_{m+1} - \overline{\mutilde}_j}\end{align*}

We shorten $\pItwoevaluated$ to $p_{2}$. And finally, for $\pIone$, the coefficient is almost identical to the one computed in \cite{Ba}. Let $I_1 = \{ t_1, \dotsc, t_r \}$ and $t_{r+1}:= k+1$, and note that $v_s+1 = k+m+1$. Then the coefficient $\pIone$ is:
\begin{align*}
     & a\left(q^{-1} \overline{z}_{v_s+1}, \overline{z}_{t_1} \right) \prod_{u=1}^{r-1} a \left(\overline{z}_{t_u},\overline{z}_{t_{u+1}} \right) \cdot \prod_{j=1}^{t_1-1} b \left(q^{-1}\overline{z}_{v_s+1}, \overline{z}_j \right) \prod_{u=1}^{r} \prod_{j=t_u+1}^{t_{u+1}-1} b \left( \overline{z}_{t_u},\overline{z}_j\right)\\
    &=\left(\frac{(t-1)q^{-1} \overline{z}_{v_s+1}}{q^{-1} \overline{z}_{v_s+1}-\overline{z}_{t_1}}\right) \cdot \prod_{u=1}^{r-1} \left(\frac{(t-1) \overline{z}_{t_u}}{ \overline{z}_{t_u}-\overline{z}_{t_{u+1}}} \right) \cdot \prod_{j=1}^{t_1-1} \left( \frac{q^{-1} \overline{z}_{v_s+1} - t \overline{z}_j}{q^{-1} \overline{z}_{v_s+1} -  \overline{z}_j}\right) \cdot  \prod_{u=1}^{r} \prod_{j=t_u+1}^{t_{u+1}-1} \left( \frac{\overline{z}_{t_u} - t \overline{z}_j}{ \overline{z}_{t_u} -  \overline{z}_j} \right)
\end{align*}

After evaluation at $\overline{\etamutilde}$, we get the formula for $\pIoneevaluated$, which we shorten to $p_{I_1}$:
\[p_{I_1} =  \left( \frac{(t-1)q^{-1} \overline{\mutilde}_{m+1}}{q^{-1} \overline{\mutilde}_{m+1}-\overline{\eta}_{t_1}} \right) \prod_{u=1}^{r-1} \left(\frac{(t-1) \overline{\eta}_{t_u}}{ \overline{\eta}_{t_u}-\overline{\eta}_{t_{u+1}}}\right)  \prod_{j=1}^{t_1-1} \left( \frac{q^{-1} \overline{\mutilde}_{m+1} - t \overline{\eta}_j}{q^{-1} \overline{\mutilde}_{m+1} -  \overline{\eta}_j} \right) \prod_{u=1}^{r} \prod_{j=t_u+1}^{t_{u+1}-1}  \left(\frac{\overline{\eta}_{t_u} - t \overline{\eta}_j}{ \overline{\eta}_{t_u} -  \overline{\eta}_j} \right)\]

With all the terms now computed, we solve the following equation to find the Pieri coefficients:
\[
    C^{\gammalambdamin}_{\etamu}\, \fmutilde \,\Estar{\etamutilde} \left( \overline{\etamutilde}\right) = \flambdatilde \cdot \overline{\gammalambdatilde}_{k+1} \cdot p_{I_1} \cdot p_{2} \cdot \Estar{\gammalambdatilde}\left(\overline{\gammalambdatilde} \right)\]
   \[ C^{\gammalambdamin}_{\etamu} = \frac{\flambdatilde}{\fmutilde} \cdot \overline{\gammalambdatilde}_{k+1} \cdot p_{I_1} \cdot p_{2} \cdot \frac{\Estar{\gammalambdatilde}\left(\overline{\gammalambdatilde} \right)}{\Estar{\etamutilde} \left( \overline{\etamutilde}\right)}\qedhere
\]
\end{proof}
\vspace*{.3cm}
    \subsection{Further coefficient results}
    Now that we have a formula for the Pieri coefficients, we will show first that the coefficient for any partially-symmetric Macdonald polynomial that appears in the support. Then we will convert the formulas back to our original conventions, and give the corresponding formula for the expansion of $e_1[x_1,\dots, x_{n-k}] \JJ$.

\begin{lem}\label{nonzero}
If $I_1$ is maximal with respect to $\gammalambdatilde$, then $\pIoneevaluated \neq 0$.
\end{lem}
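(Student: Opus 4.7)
The plan is to verify that every numerator and every denominator in the explicit product expression for $p_{I_1}$ is nonzero at $\overline{\etamutilde}$. By algebraic independence of $q$ and $t$ over $\Q$, any identity $q^a t^b = 1$ with $a,b \in \Z$ forces $a=b=0$; in particular, each factor of the form $\overline{x} - c\,\overline{y}$ (with $c \in \{1,t\}$ and $\overline{x},\overline{y}$ monomials in $q,t$) vanishes iff the $q$- and $t$-exponents match simultaneously.

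The first step is to establish two distinctness results. For the pairwise distinctness of the eigenvalues $\overline{\etamutilde}_1, \dotsc, \overline{\etamutilde}_n$: if $i<j$ and $\etamutilde_i = \etamutilde_j = c$, a direct count gives
\[ \ell'_{\etamutilde}(j) - \ell'_{\etamutilde}(i) = 1 + \#\{\, i < \ell < j : \etamutilde_\ell = c \,\} \geq 1, \]
so the $t$-exponents differ. This handles all denominators of the form $\overline{\eta}_{t_u} - \overline{\eta}_{t_{u+1}}$ and $\overline{\eta}_{t_u} - \overline{\eta}_j$, which are differences of distinct eigenvalues. For the distinctness of $q^{-1}\overline{\mutilde}_{m+1}$ from every $\overline{\etamutilde}_i$, set $c := \mutilde_1 - 1$; matching $q$-exponents forces $\etamutilde_i = c$. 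Using that positions $k+1, \dotsc, k+m+1$ of $\etamutilde$ all carry the value $\mutilde_1 = c+1$ (by the choice of $\mutilde$) while positions $k+m+2, \dotsc, n$ all avoid $c+1$, a short count with $S := \{\, j : \etamutilde_j \geq c+1 \,\}$ yields $\ell'_{\etamutilde}(k+m+1) = |S|-1$ and $\ell'_{\etamutilde}(i) = |S| + \#\{\, j<i : \etamutilde_j = c \,\} \geq |S|$. Hence the denominators $q^{-1}\overline{\mutilde}_{m+1} - \overline{\eta}_{t_1}$ and $q^{-1}\overline{\mutilde}_{m+1} - \overline{\eta}_j$ are nonzero as well.

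The numerators use maximality directly. A numerator $\overline{\eta}_{t_u} - t\overline{\eta}_j$ (with $t_u < j < t_{u+1}$) vanishes only if $\eta_{t_u} = \eta_j$; combining the support formula $\eta_{t_u} = \gamma_{t_{u+1}}$ (with the convention $\gamma_{t_{r+1}} := \lambdatilde_1$ when $u=r$) with $\eta_j = \gamma_j$ gives $\gamma_j = \gamma_{t_{u+1}}$, contradicting maximality condition (2). Similarly, $q^{-1}\overline{\mutilde}_{m+1} - t\overline{\eta}_j$ (for $j<t_1$) vanishes only if $\gammanu_{t_1} = \eta_j = \gamma_j$, contradicting maximality condition (1) (where $\gammanu_{t_1} := \lambdatilde_1$ when $I_1 = \emptyset$). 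The trivial numerators $(t-1)q^{-1}\overline{\mutilde}_{m+1}$ and $(t-1)\overline{\eta}_{t_u}$ are nonzero because $t \neq 1$ and eigenvalues are monomials in $q,t$. Assembling, $\pIoneevaluated$ is a product of nonzero quantities, hence nonzero. The main delicate point is the second distinctness result: maximality alone cannot rule out $\gamma_{t_1} = \gamma_{t_2}$, which would match $q$-exponents in the denominator $q^{-1}\overline{\mutilde}_{m+1} - \overline{\eta}_{t_1}$, so the sharper $t$-exponent count is essential.
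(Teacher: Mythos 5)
Your proposal is correct, and its treatment of the numerators is in substance the paper's argument run in the contrapositive direction: the paper assumes $\pIoneevaluated=0$, deduces from matching $q$-powers that the two columns involved have equal height, and then exhibits $I_1'=I_1\cup\{j\}$ with $d_{I'}\gammanu=d_I\gammanu$, contradicting maximality; you instead appeal directly to the explicit maximality conditions (1) and (2), which is the same mechanism. Where you go beyond the paper is in checking that the evaluation is well defined, i.e.\ that no denominator of $p_{I_1}$ vanishes at $\overline{\etamutilde}$. The factors $\overline{\eta}_{t_u}-\overline{\eta}_{t_{u+1}}$ and $\overline{\eta}_{t_u}-\overline{\eta}_j$ are differences of distinct eigenvalue entries of one composition and are handled by the standard count $\ell'_{\etamutilde}(j)-\ell'_{\etamutilde}(i)\geq 1$ when the entries agree; but the factor $q^{-1}\overline{\mutilde}_{m+1}-\overline{\eta}_{t_1}$ is genuinely more delicate, since maximality does not exclude $\gamma_{t_1}=\gamma_{t_2}$ and so the $q$-exponents can coincide, and your count $\ell'_{\etamutilde}(k+m+1)=|S|-1$ versus $\ell'_{\etamutilde}(i)\geq |S|$ (using that all entries equal to $\mutilde_1$ sit in positions $k+1,\dotsc,k+m+1$) is exactly what is needed and is correct. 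The paper leaves this nonvanishing of denominators implicit, so your version buys a complete verification that $\pIoneevaluated$ is a well-defined nonzero element of $\K$, at the cost of a slightly longer argument.
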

\begin{proof}
    See appendices.
\end{proof}

\begin{cor}\label[corollary]{support}
For every $\etamu \in \MM$ , the coefficient $C^{\gammalambdamin}_{\etamu}$ is nonzero.
\end{cor}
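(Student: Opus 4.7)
The plan is to verify that each factor in the explicit formula
\[ C^{\gammalambdamin}_{\etamu} = \frac{\flambdatilde}{\fmutilde} \cdot \overline{\gammalambdatilde}_{k+1} \cdot p_{I_1} \cdot p_{2}\cdot \frac{\Estar{\gammalambdatilde}\left(\overline{\gammalambdatilde} \right)}{\Estar{\etamutilde} \left( \overline{\etamutilde}\right)} \]
from Theorem~\ref{Pieriformula} is nonzero in $\K=\Q(q,t)$, for any $\etamu\in\MM$. Since $\K$ is an integral domain, this suffices.

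First I would dispatch the two $f$-factors using the closed form \eqref{closedform}: each is, up to a power of $t$, a ratio of products of terms of the shape $1-q^{\ell+1}t^{a}$ with $\ell \geq 0$, and such terms are nonzero in $\Q(q,t)$ because $q$ and $t$ are algebraically independent. The eigenvalue $\overline{\gammalambdatilde}_{k+1}$ is a single monomial $q^{*} t^{*}$ and hence nonzero. For the interpolation polynomial evaluations, the explicit formula \eqref{Estarformula} expresses each as a product of a monomial and factors $1-q^{-(\ell+1)}t^{-a}$, none of which vanish in $\Q(q,t)$; alternatively this is built into the definition \eqref{vanishing}.

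The nonvanishing of $p_{I_1}$ is exactly Lemma~\ref{nonzero}, so I would invoke it and focus on $p_{2}$. The prefactor $\frac{1-t^{m+1}}{1-t}=1+t+\cdots+t^{m}$ is a nonzero polynomial in $t$. By construction of $\mutilde$, the entry $\mutilde_{m+1}$ equals $\mutilde_1 = \gamma\nu_{t_1}+1 \geq 1$, so $\overline{\mutilde}_{m+1} = q^{\mutilde_{m+1}} t^{-\ell'_{\mutilde}(m+1)}$ has a positive power of $q$; hence $\overline{\mutilde}_{m+1} - t^{-n+1}$ cannot vanish in $\Q(q,t)$. For $j\geq m+2$, the definition of $\mutilde$ forces $\mutilde_j < \mutilde_1 = \mutilde_{m+1}$ (all copies of the maximal value sit in the first $m+1$ positions), so the equations $\overline{\mutilde}_{m+1} = \overline{\mutilde}_j$ and $\overline{\mutilde}_{m+1} = t\,\overline{\mutilde}_j$ would each force a relation $q^{\mutilde_{m+1}-\mutilde_j}$ equal to a pure power of $t$, which is impossible by algebraic independence. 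Thus every factor in $p_2$ is a nonzero element of $\K$.

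Combining these observations, $C^{\gammalambdamin}_{\etamu}$ is a product of nonzero elements of $\Q(q,t)$ and hence nonzero. The only nontrivial step is the cancellation analysis for $p_{I_1}$, which is handled in Lemma~\ref{nonzero}; everything else is straightforward nonvanishing of $1-q^{a}t^{b}$ type expressions using algebraic independence of $q$ and $t$.
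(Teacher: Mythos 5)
Your proof is correct and takes essentially the same approach as the paper: both verify the formula of \cref{Pieriformula} factor by factor, invoking \cref{nonzero} for $p_{I_1}$, with the only difference being that the paper cites the intermediate steps of the proof of \cref{Pieriformula} and \cref{Pexpansion} for the nonvanishing of $p_2$ and the $f$-factors where you re-verify them directly from the explicit formulas. One small slip worth noting: $\mutilde_{m+1}=\mutilde_1$ need not be the maximal entry of $\mutilde$ (the remaining entries may be larger), but your argument only needs $\mutilde_j\neq\mutilde_{m+1}$ for $j\geq m+2$, which does hold because all entries equal to the increased column height are placed at the start by the construction of $\mutilde$.
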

\begin{proof}
In the proof of uniqueness of $I_2$ and $\nu$, we found that $\pItwoevaluated \neq 0$. The evaluation $\Estar{\gammalambdatilde} \left(\overline{\gammalambdatilde} \right)$ is nonzero by the vanishing properties of the interpolation polynomials. And $\flambdatilde \neq 0$ by \cref{Pexpansion}. So choosing the maximal set $I_1$ such that $\dI{\gammalambdatilde} = \etamutilde$ guarantees the coefficient $C^\gammalambdamin_{\etamu}$ is nonzero.
\end{proof}

We now convert \cref{Pieriformula} back to our original notation.

Given a weight $\lambdagamma$, begin by choosing some entry of $\lambda$ to be $\lambdatilde_{n-k}$ and set $I_1 = \{t_1, \dotsc, t_r\} \subseteq [1,k]$ which satisfies, 
\begin{itemize}
    \item $\eta_j \neq \eta_{t_u}$ for any $u \in \{1,\dotsc, r\}$ and $j \in \{t_{u-1}+1,\dotsc, t_{u}-1\}$, and
    \item $\eta_j \neq \mutilde_{n-k}-1$ for any $j \in \{t_r+1, \dotsc, k\}$.
\end{itemize}
The increased column height has height $\gamma_{t_r}+1$. Next we choose $\mutilde$ to have all the columns $\mutilde_{h} ,\dotsc, \mutilde_{n-k}$ of height $\gamma_{t_r}+1$, and $\mutilde_1 < \, \dotsm \, < \mutilde_{h-1}$. Also, we choose $(\lambdatilde_1, \dotsc, \lambdatilde_{n-k-1}) = (\mutilde_1, \dotsc, \mutilde_{n-k-1})$. Finally, we define $\eta$ as before,
 \begin{align}
     &\eta_i = \gamma_i \hspace*{.55cm}\text{ if } i\notin I_1\nonumber \\
     &\eta_{t_j}=\gamma_{t_{j+1}} \text{ for } 1\leq i \leq r \text{ where } \gamma_{t_{r+1}} = \lambdatilde_1.\label{eta}
 \end{align}

\begin{cor}\label{ourconventions}
 The product $e_1[x_1,\dotsc, x_{n-k}] \PP$ can be written as a sum of partially-symmetric Macdonald polynomials with the formula,
\[ (x_1 + \dotsm + x_{n-k})\PP{\lambdagamma} = \sum_{\mueta \in \MMM} C^{\lambdagamma}_{\mueta} \PP{\mueta},\]
where
\[C^{\lambdagamma}_{\mueta} = \frac{f_{\lambdatilde,\lambdagamma}}{f_{\mutilde,\mueta}} \cdot \overline{\lambdatildegamma}_{n-k} \cdot p_{I_1} \cdot p_{2}\cdot \frac{\Estar{\lambdatildegamma}\overline{\lambdatildegamma}}{\Estar{\mutildeeta}\overline{\mutildeeta}}, \]
$I_1$ is maximal with respect to $\lambdatildegamma$, the symmetric parts $\mutilde$ and $\lambdatilde$ are those described above, $p_{I_1}$ and $p_{2}$ are given by
\begin{align*}
p_{I_1} &= \\
  &\left(\frac{(t-1)q^{-1} \overline{\mutilde}_{\hIone}}{q^{-1} \overline{\mutilde}_{\hIone}-\overline{\eta}_{t_r}}\right) \cdot \prod_{u=1}^{r-1} \left(\frac{(t-1) \overline{\eta}_{t_{u+1}}}{ \overline{\eta}_{t_{u+1}}-\overline{\eta}_{t_{u}}} \right) \cdot \prod_{j=t_r+1}^{k} \left( \frac{q^{-1} \overline{\mutilde}_{\hIone} - t \overline{\eta}_j}{q^{-1} \overline{\mutilde}_{\hIone} -  \overline{\eta}_j}\right) \cdot  \prod_{u=1}^{r} \prod_{j=t_{u-1}+1}^{t_{u}-1} \left( \frac{\overline{\eta}_{t_u} - t \overline{\eta}_j}{ \overline{\eta}_{t_{u}} -  \overline{\eta}_j} \right)\\
p_{2}&=\left( \sum_{i=0}^{m} t^i \right)\cdot (\overline{\mutilde}_{\hIone} - t^{-n+1}) \cdot \prod_{j=1}^{\hIone-1} \frac{\overline{\mutilde}_{\hIone} - t \cdot \overline{\mutilde}_j}{\overline{\mutilde}_{\hIone} - \overline{\mutilde}_j}.
\end{align*}
\end{cor}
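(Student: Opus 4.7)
The plan is to deduce this corollary directly from \cref{Pieriformula} by applying the change-of-variables isomorphism $z_i \mapsto x_{n+1-i}$ between Baratta's conventions and ours, which has already been set up in Section 4.1. Since \cref{Pieriformula} expresses the Pieri rule for $(z_{k+1}+\dotsm+z_n)\PB{\gammalambdamin}$, and under this isomorphism $z_{k+1}+\dotsm+z_n$ corresponds to $x_1+\dotsm+x_{n-k}$ while $\PB{\gammalambdamin}$ corresponds to $\PP{\lambdagamma}$, the proof amounts to tracking how each piece of the formula transforms.

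First I would unpack the effect of index reversal on the supporting data. A composition in Baratta's notation is read in reverse with respect to ours, so $w_0\lambdagamma = \gammalambdamin$; under reversal the ``weakly increasing'' symmetric part in Baratta becomes our ``weakly decreasing'' convention, and the role of the first coordinate of $\mutilde$ (the unique increased column together with its $m$ equal neighbors, appearing at positions $1,\dotsc,m+1$) becomes that of the last coordinates (positions $\hIone,\dotsc,n-k$). In particular the quantity $m+1$ in Baratta's setup equals $n-k-\hIone+1$ in ours, and the ordering condition $\mutilde_1<\dotsm<\mutilde_{h-1}$ in our statement is the image of the weakly-decreasing tail condition on Baratta's $\mutilde$. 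Similarly the distinguished index $t_1\in I_1\subseteq[1,k]$ in Baratta's conventions, which governed the increased column, becomes the index $t_r$ in ours after reversal, which is why the maximality conditions defining the admissible $I_1$ get rewritten with the roles of $t_1$ and $t_r$ swapped and with the ``left'' inequality $j<t_1$ replaced by the ``right'' inequality $j>t_r$.

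Next I would translate each factor of the coefficient $C^{\gammalambdamin}_{\etamu}$. The ratio $f_{\lambdatilde,\lambdagamma}/f_{\mutilde,\mueta}$ and the eigenvalue $\overline{\lambdatildegamma}_{n-k}$ (the former $\overline{\gammalambdatilde}_{k+1}$) come from the eigenvalue of $\Xi_{k+1}^{-1}$; recalling the definition \cref{eigenvalueformula} and that reversal of indexing takes $\overline{\mu}_{k+1}$ to the entry in position $n-k$ of the reversed eigenvalue vector, these match. The evaluation ratio $\Estar{\lambdatildegamma}(\overline{\lambdatildegamma})/\Estar{\mutildeeta}(\overline{\mutildeeta})$ is invariant under the isomorphism since the interpolation polynomials are only relabeled. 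The main bookkeeping is in $p_{I_1}$ and $p_2$: under the transformation $z_j\mapsto x_{n+1-j}$ one checks that the factor $q^{-1}\overline{\mutilde}_{m+1}$ becomes $q^{-1}\overline{\mutilde}_{\hIone}$, the chain $\overline{\eta}_{t_1},\overline{\eta}_{t_2},\dotsc$ reverses to $\overline{\eta}_{t_r},\overline{\eta}_{t_{r-1}},\dotsc$ explaining the reindexing of the product over $u$, and the ranges $[1,t_1-1]$ and $[t_u+1,t_{u+1}-1]$ transform into $[t_r+1,k]$ and $[t_{u-1}+1,t_u-1]$ respectively. The telescoping sum $(1-t^{m+1})/(1-t)$ is rewritten as $\sum_{i=0}^{m}t^i$, and the product defining $p_2$ now ranges over $j\in[1,\hIone-1]$ instead of $[m+2,n-k]$ because these index sets are reversed images of each other.

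The main obstacle I anticipate is purely notational: keeping the bijection $j\leftrightarrow n+1-j$ consistent across the nested products, so that every index range, every subscript on $\overline{\eta}$ or $\overline{\mutilde}$, and every ordering condition on $I_1$ is translated correctly. Once this translation dictionary is fixed, the corollary follows from \cref{Pieriformula} without any further computation, and the nonvanishing of the coefficient for each $\mueta\in\MMM$ is inherited from \cref{support}.
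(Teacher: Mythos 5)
Your proposal is correct and follows essentially the same route as the paper, which obtains \cref{ourconventions} purely by translating \cref{Pieriformula} through the reversal isomorphism $z_i \mapsto x_{n+1-i}$, using exactly the reindexing dictionary you describe ($t_1 \leftrightarrow t_r$, position $m+1 \leftrightarrow \hIone$, the reversed product ranges, and $\tfrac{1-t^{m+1}}{1-t}=\sum_{i=0}^m t^i$), with nonvanishing inherited from \cref{support}.
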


We can also renormalize the Pieri formula to get a corresponding equation for the integral form $\JJ$ polynomials.

\begin{cor}\label{Jpieri}
 There is a Pieri expansion of the integral form of partially-symmetric Macdonald polynomials,
\[ (x_1 + \dots + x_{n-k})\JJ = \sum_{\mueta \in \MMM} \left(  \frac{f_{\lambdatilde,\lambdagamma}}{f_{\mutilde,\mueta}} \cdot \overline{\lambdatildegamma}_{n-k} \cdot p_{I_1} \cdot p_{2}\cdot \frac{\Estar{\lambdatildegamma}\overline{\lambdatildegamma}}{\Estar{\mutildeeta}\overline{\mutildeeta}} \cdot \frac{ j_{\lambdagamma}}{j_{\mueta}} \right) \mathcal{J}_{\mueta}. \]
\end{cor}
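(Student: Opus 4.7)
The plan is to obtain \cref{Jpieri} as a direct renormalization of \cref{ourconventions}. Since by definition $\JJ = \jj\, \PP$ and $\mathcal{J}_{\mueta} = j_{\mueta}\, P_{\mueta}$, the integral-form Pieri rule should follow by multiplying the Pieri rule for $\PP$ by $\jj$ and then rewriting each $P_{\mueta}$ on the right-hand side as $j_{\mueta}^{-1}\mathcal{J}_{\mueta}$.

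Concretely, I would start from
\[
(x_1 + \dotsm + x_{n-k})\PP{\lambdagamma} = \sum_{\mueta \in \MMM} C^{\lambdagamma}_{\mueta}\, P_{\mueta},
\]
with $C^{\lambdagamma}_{\mueta}$ as in \cref{ourconventions}. Multiplying both sides by $\jj$ converts the left-hand side to $(x_1 + \dotsm + x_{n-k})\,\JJ$ using the definition of the integral form. On the right-hand side, for each $\mueta\in\MMM$ I insert a factor of $j_{\mueta}\cdot j_{\mueta}^{-1}$, grouping $j_{\mueta}\,P_{\mueta} = \mathcal{J}_{\mueta}$ and absorbing $\jj / j_{\mueta}$ into the coefficient. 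This yields exactly the stated coefficient $C^{\lambdagamma}_{\mueta}\cdot \jj/j_{\mueta}$ multiplying $\mathcal{J}_{\mueta}$.

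There is essentially no obstacle: the identity is a term-by-term rescaling, and the support set $\MMM$ is unchanged since the normalization constants $\jj$ and $j_{\mueta}$ are nonzero scalars in $\K$. In particular, no new combinatorial input is required beyond \cref{ourconventions} and the definition of $\JJ$, and one can present this as a one-line derivation in which the computed factor $\jj / j_{\mueta}$ is simply tacked onto the Pieri coefficient for the monic $P$-basis.
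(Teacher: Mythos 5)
Your proposal is correct and matches the paper's (implicit) argument: \cref{Jpieri} is obtained from \cref{ourconventions} exactly by multiplying through by $\jj$, writing $P_{\mueta}=j_{\mueta}^{-1}\mathcal{J}_{\mueta}$, and absorbing the nonzero factor $\jj/j_{\mueta}$ into each coefficient. Nothing further is needed.
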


\subsection{Multiplication by a nonsymmetric variable}

Much like in the previous section, we would like to find a rule for the expansion of the product $x_j P_\lambdagamma$, where $j>n-k$, in the partially-symmetric Macdonald basis. Fortunately, the approach is virtually identical to what we've done to compute $e_1[x_1, \dotsc, x_n] \Pn$. As there is little extra insight to be gained from the derivation, those computations are left to the appendices. The primary difference is that the nonsymmetric variables are split into two groups, those in $[1,j-1]$ and those in $[j,k]$, which act differently since the operators $H_j, \dotsc, H_k$ act after $\Phi$, and $H_1, \dotsc, H_{j-1}$ act before $\Phi$ in the interpolation polynomial computations.

\begin{thm}\label{nonsymmetricformula} Let $j \in [1,k]$. Then the expansion of the product $x_j \PB{\gammalambdamin}$ is,

\begin{equation} \label{monomialexpansion} z_j \PB{\gammalambdamin} =\sum_{\substack{|\etamu| = |\lambdagamma| + 1 \\ \mu_1 \geq \, \dotsm \, \geq \mu_{n-k}}}  \DBaratta \cdot \PB{\etamumin}, \end{equation}

where $\mu$, $\lambdatilde$, and $\mutilde$ are as found in \cref{Pieriformula}, and $\eta$ is
\begin{align}
    \eta_\ell = \begin{cases}
    \gamma_\ell  &\text{ if } 1\leq \ell\leq j-1 \text{ and } \ell \notin I'_1\\
    \gamma_{t_{i+1}} &\text{ if } \ell = t_i \text{ and } i\neq r\\
    \gamma_j &\text{ if } \ell = t_r\\
    \gamma_{y_1+1} &\text{ if } \ell = j\\
    \gamma_\ell &\text{ if } j+1 \leq \ell \leq k \text{ and } \ell-1 \notin I'_3\\
    \gamma_{y_{i+1}+1} &\text{ if } \ell=y_i+1 \text{ and } 1\leq i < c\\
    \lambdatilde_1 &\text{ if } \ell = y_c+1
    \end{cases}
\end{align}
for some maximal $I'_1 = \{t_1, \dotsc, t_r\} \subseteq [1,j-1]$ and $I'_3 = \{y_1, \dotsc, y_c\} \subseteq [j,k]$. The coefficients $\DBaratta$ have the formula,

\begin{equation} \DBaratta =C_{\etamumin}^{\gammalambdamin} \cdot \frac{ \overline{\gammanu}_j}{ \overline{\gammanu}_{k+1}} \cdot t^m \cdot \frac{1-t}{1-t^{m+1}} \cdot  \frac{\overline{\eta}_j(\overline{\eta}_{t_r} - \overline{\eta}_{y_1})}{\overline{\eta}_{t_r}(\overline{\eta}_j-\overline{\eta}_{y_1})}\cdot \prod_{u=j+1}^{y_1-1} \frac{\overline{\eta}_j-t\overline{\eta}_u}{\overline{\eta}_j-\overline{\eta}_u} \cdot \prod_{u=t_r+1}^{y_1-1} \frac{\overline{\eta}_{t_r}-\overline{\eta}_u}{\overline{\eta}_{t_r}-t\overline{\eta}_u},\end{equation}

\end{thm}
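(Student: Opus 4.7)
The plan is to mirror the derivation of \cref{Pieriformula} essentially verbatim, with two structural adjustments reflecting that $j \in [1,k]$ sits in the nonsymmetric range. First, since we no longer have the identity $\overline{H}_i \PB{\gammalambdamin} = \PB{\gammalambdamin}$ available for $i \le k$, I would keep $\Xi_j^{-1}$ in place rather than commuting it across Hecke operators to land on $\Xi_{k+1}^{-1}$. Second, the operator $\ZZ_j = H_j \dotsm H_{n-1}\Phi H_1 \dotsm H_{j-1}$ now naturally splits the Hecke content into three ranges $[1,j-1]$, $[j,k]$, and $[k+1,n-1]$, which will correspond respectively to the subsets $I'_1$, $I'_3$, and an unnamed symmetric-range subset playing the role of $I_2$ from the symmetric proof.

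Beginning from \cref{isomorphism} and \cref{Tiactthesame}, I would write
\[ z_j \PB{\gammalambdamin} = \Psi^{-1}\!\left( \ZZ_j \, \Xi_j^{-1} \, \ep \, \Estar{\gammalambdamin} \right), \]
using that $\prod_\ell \Xi_\ell$ acts as the degree-dependent scalar on every summand of $\ep \Estar{\gammalambdamin}$. Expanding $\ep \Estar{\gammalambdamin} = \sum_{\nu \sim \lambda^-} \fnu \Estar{\gammanu}$, the operator $\Xi_j^{-1}$ contributes the scalar $\overline{\gammanu}_j$ rather than $\overline{\gammanu}_{k+1}$, which accounts for the factor $\overline{\gammanu}_j/\overline{\gammanu}_{k+1}$ appearing in the statement. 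Expanding each Hecke operator as $a(z_i,z_{i+1}) + b(z_i,z_{i+1})s_i$ and $\Phi = (z_n - t^{-n+1})\Delta$ produces a triple sum over subsets $I'_1 \subseteq [1,j-1]$, $I'_3 \subseteq [j,k]$, and a symmetric-range subset, together with explicit coefficient polynomials $\pstarone$, $\pstarthree$, $\pstartwo$ in the $z$-variables, and a composite permutation whose eigenvalue action on $\Estar{\gammanu}$ is encoded by an appropriate $\dI$-type recipe.

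Next, I would evaluate the resulting identity at the eigenvalue $\overline{\etamutilde}$. By \cref{vanishing}, the left-hand side collapses to $\DBaratta \cdot f_{\mutilde,\etamumin} \cdot \Estar{\etamutilde}(\overline{\etamutilde})$. On the right-hand side, the requirement $\dI\gammanu = \etamutilde$ determines, by uniqueness arguments of the type in \cref{comaximality} and the symmetric-range analysis carried out in the proof of \cref{Pieriformula}, that $\nu = \lambdatilde$, that the symmetric-range subset equals a uniquely determined interval of the form $[j_0, k+m]$, and that $I'_1$ and $I'_3$ are the unique maximal sets whose union reproduces $\etamutilde$ via the $\dI$ recipe. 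The support description for $\eta$ listed in the theorem is then read off directly from this recipe, just as the support in \cref{expansion} was read off from its $\dI$.

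The remaining task is to evaluate $\pstarone$, $\pstarthree$, and $\pstartwo$ at $\overline{\etamutilde}$ and to observe that the symmetric-range contribution produces exactly the same factor $p_2$ as in \cref{Pieriformula}; factoring out the symmetric-case answer $C_{\etamumin}^{\gammalambdamin}$ then leaves a quotient involving only eigenvalues at nonsymmetric positions, which after routine simplification matches the displayed product of ratios. The main obstacle is the combinatorial bookkeeping for how the shift $\Delta$ threads between the two nonsymmetric blocks $[1,j-1]$ and $[j,k]$: the $q^{-1}$ factor from $\Delta$ attaches to a different cycle than in the symmetric case, and one must verify with care which entry among $\eta_{t_r}$ (via $I'_1$) or $\eta_{y_c+1}$ (via $I'_3$) receives the increased column height, and how the $t^m$ and $(1-t)/(1-t^{m+1})$ prefactors arise from the interaction of the new cycle structure with the eigenvalues of $\overline{\mutilde}$ in positions of equal height.
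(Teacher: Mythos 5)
Your plan reproduces the paper's own proof essentially step for step: keep $\Xi_j^{-1}$ in place so it contributes the eigenvalue $\overline{\gammanu}_j$, split the Hecke expansion of $H_j\dotsm H_{n-1}\Phi H_1\dotsm H_{j-1}$ into the three index sets $I'_1\subseteq[1,j-1]$, $I'_3\subseteq[j,k]$ and a symmetric-range set, evaluate at $\overline{\etamutilde}$ to force $\nu=\lambdatilde$ and uniqueness of the maximal sets, then compare $\pstarone\cdot\pstarthree$ with $p_{I_1}$ and factor out $C^{\gammalambdamin}_{\etamumin}$. The one phrase to tighten is that the symmetric-range contribution is not ``exactly $p_2$'' but $\pstartwo = t^m\,\frac{1-t}{1-t^{m+1}}\,p_2$, and the clean reason is simply that the sum over starting positions $j\in[k+1,k+m+1]$ which produced $\sum_{i=0}^m t^i$ in the $e_1$ case is absent here, so only the full product $H_{k+1}\dotsm H_{k+m}$ (the unique nonvanishing choice, acting as $t^m$) survives --- which is exactly the bookkeeping you flag as the remaining obstacle.
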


\section{Cancellation} \label{Cancellation}
The formula in \cref{Jpieri} is highly unsimplified. We conclude our work by cancelling that formula as far as possible, with the aim of a condensed form which will prove useful in upcoming work with D. Orr.
\subsection{Internal cancellations in $\flambdatilde/\fmutilde$.} Recall that $\lambdatilde$ and $\mutilde$ are almost antidominant, but both have columns of height $\mutilde_{n-k}$ as far right as possible, and $\lambdatilde_{n-k}$ is some distinguished entry from $\lambda$. If $\lambdatilde \neq \mutilde$, the only entry which differs between the two diagrams is the $(n-k)$th position. Consider $\flambdatilde$ and $\fmutilde$ using the recursive formula. Each symmetric column of height $\mutilde_{n-k}$ is permuted past taller columns in both $\mutilde$ and $\lambdatilde$, with this happening one more time in $\mutilde$ than $\lambdatilde$. The term which contributes multiplicatively to $\fmutilde$ and $\flambdatilde$ is identical for all the columns of height $\mutilde_{n-k}$ that are in both diagrams, as the boxes in row $\mutilde_{n-k}+1$ of the symmetric diagrams have the same number of arms and legs before and after columns are cycled.

There is one column which is permuted to the right from each of $\mutilde$ and $\lambdatilde$ which contributes to $\flambdatilde/\fmutilde$ and is not cancelled. For $\flambdatilde$, this is the column $\lambdatilde_{n-k}$, and for $\fmutilde$, this is the leftmost column of height $\mutilde_{n-k}$. Again using the recursive formula, the contributions are as follows:
\begin{itemize}
    \item For $\flambdatilde$, permuting the entry $\lambdatilde_{n-k}$ all the way to the right contributes\\ $\displaystyle{\prod_{ \square \in d_{(\lambdatilde_{n-k}+1)}(\lambda^-)}\frac{t-q^{\ell_{\lambdamingamma}(\square)+1}t^{a_{\lambdamingamma}(\square)+1}}{1-q^{\ell_{\lambdamingamma}(\square)+1}t^{a_{\lambdamingamma}(\square)+1}}}$\\
    \item Similarly, in $\fmutilde$, permuting the leftmost entry $\mutilde_{n-k}$ as far right as possible (left of the rest of the columns of that height) contributes\\
    $\displaystyle{\prod_{\square \in d_{(\mutilde_{n-k}+1)}(\mutilde)}\frac{t-q^{\ell_{\mutildeeta}(\square)+1}t^{a_{\mutildeeta}(\square)}}{1-q^{\ell_{\mutildeeta}(\square)+1}t^{a_{\mutildeeta}(\square)}}}$
\end{itemize}
Therefore we obtain the cancelled form,
\[ \frac{\flambdatilde}{\fmutilde} = \prod_{ \square \in d_{(\lambdatilde_{n-k}+1)}(\lambda^-)}\frac{t-q^{\ell_{\lambdamingamma}(\square)+1}t^{a_{\lambdamingamma}(\square)+1}}{1-q^{\ell_{\lambdamingamma}(\square)+1}t^{a_{\lambdamingamma}(\square)+1}} \left( \prod_{ \square \in d_{(\mutilde_{n-k}+1)}(\mutilde)}\frac{t-q^{\ell_{\mutildeeta}(\square)+1}t^{a_{\mutildeeta}(\square)}}{1-q^{\ell_{\mutildeeta}(\square)+1}t^{a_{\mutildeeta}(\square)}}\right)^{-1} \]

Note that if $\lambdatilde=\mutilde$, the formula still holds, but the argument works by moving just the $\lambdatilde_{n-k}$ column to the $(n-k)$th position, and moving the leftmost column of height $\mutilde_{n-k}$ to its final position, and the rows of boxes in $d_{(\lambdatilde_{n-k}+1)}(\lambda^-) $ and $d_{(\mutilde_{n-k}+1)}(\mutilde)$ are the same row.

\subsection{Internal cancellations in $j_\lambdagamma/j_\mueta$} To simplify this constant, we need to identify and sort all boxes in the diagrams $\lambdamingamma$ and $\mumineta$ whose contributions to $j_\lambdagamma$ and $j_\mueta$ are different. The diagram at the end of this section gives a visual representation of these groups. The groups that are in $\lambdamingamma$ are labelled with numbers and a subscript $j$, and the corresponding groups in $\mumineta$, namely the same boxes after columns are cycled, are labelled with the same group number, $j$, and a prime.

First recall that $j_\lambdagamma$ and $j_\mueta$ are calculated using the diagrams $\lambdamingamma$ and $\mumineta$, with the symmetric parts weakly increasing (antidominant). 
Beginning with columns which are in the symmetric part of both diagrams, the only symmetric boxes whose statistics change after columns are cyclically permuted are those in the same row as the newly added box. Each of these gains one (modified) arm after cycling. So if we let
\[ \group{1}_j'=\left\{\square \in d_{(\mutilde_{n-k})}(\mutilde) \, \mid \, \square \text{ is not the newly added box}\right\},\]
then the contribution of these boxes to the product $j_\lambdagamma/j_\mueta$ is
\begin{equation}\label{groupAproduct} \jone := \prod_{\square \in \group{1}_j'} \frac{1-q^{\leg{\mumineta}}t^{\armone{\mumineta}}}{1-q^{\leg{\mumineta}}t^{\armone{\mumineta}+1}}.\end{equation}

Next, we address the nonsymmetric columns, including the column which is moved from $\lambda$. First suppose $I_1=[k+1,n]$, i.e., each $T_i$ for $k+1\leq i \leq n-1$ acts as a constant, not permuting columns. Then the only boxes whose statistics change are those that moved from the rightmost symmetric column of height $\lambdatilde_{n-k}$ (call this $\group{2}_j$) to the first column of $\eta$ (call this $\group{2}_j'$).
In this case, the contribution to $j_\lambdagamma/j_\mueta$ is
\begin{equation}\jtwo := \frac{\prod\limits_{\square \in \group{2}_j}1-q^{\leg{\lambdamingamma}}t^{\armone{\lambdamingamma}+1} }{\prod\limits_{\square \in \group{2}_j'} 1-q^{\leg{\mumineta}+1}t^{\armtwo{\mumineta}+1}}.\end{equation}

Notice that for a box $\square=(\lambdatilde_{n-k},j)\in \group{2}_j$ and its corresponding box $ \square = (\eta_{t_1},j) \in \group{2}'_j$, the boxes have the same number of legs, and the arms only differ by the number of nonsymmetric columns of height $j-1$, which are not counted in $\armone{\lambdamingamma}$. So we can rewrite $\jtwo$ as a product of rational functions in only one diagram,
\begin{equation} \label{groupBproduct} \jtwo  = \prod\limits_{\square \in \group{2}_j}\frac{1-q^{\leg{\lambdamingamma}}t^{\armone{\lambdamingamma}+1} }{ 1-q^{\leg{\lambdamingamma}+1}t^{\armone{\lambdamingamma}+1+m_{j-1}(\eta)}}, \end{equation}
where $m_{j-1}(\eta)$ is the number of entries in $\eta$ that are equal to $j-1$.

Now we extend this to where $I_1 \neq [k+1,n]$ by considering what happens to the diagram if the action of $T_i$ includes the permutation $s_i$. From the columns in position $i$ and $i+1$, the only box whose statistic is changed from $\lambdamingamma$ is
\[ \square = \begin{cases} (i+1,\gamma_i+1) \text{ if } \gamma_i < \gamma_{i+1}\\ (i,\gamma_{i+1}+1) \text{ if } \gamma_i > \gamma_{i+1} \end{cases}\]
In the first case, the box has one more arm in $\mumineta$ than in $\lambdamingamma$, and in the second case, the box has one less arm in $\mumineta$ than $\lambdamingamma$. We will call these boxes $\group{3}_j$ when considered in $\lambdamingamma$, and $\group{3}_j'$ when considered in $\mumineta$. The boxes in the column which become $\mutilde_h$, as well as boxes in columns $\{t_r+n-k, \dotsc, n\}$ will be excluded from this group and dealt with separately. Then the contribution for these boxes is
\begin{equation}\label{groupCproduct} \jthree := \frac{\prod\limits_{\square \in \group{3}_j}1-q^{\leg{\lambdamingamma}+1}t^{\armtwo{\lambdamingamma}+1} }{\prod\limits_{\square \in \group{3}_j'} 1-q^{\leg{\mumineta}+1}t^{\armtwo{\mumineta}+1}}.\end{equation}

Note that $\group{3}_j$ might include a box from the column $\group{2}_j$, if the action of $T_{n-k}$ includes the permutation $s_{n-k}$. While this could be cancelled further with terms in $\jtwo$, it is more useful to leave the product uncancelled in this form for now.

The final set of boxes to consider are in the column which wraps around and gains a box, and any boxes in columns $\{t_r+n-k, \dotsc, n\}$ whose statistics were affected by the column that wrapped around. Like in $\group{3}_j$, each time the column $\gamma_{t_r}$ passes another column, one box in either the $\gamma_{t_r}$ column or the column it passes gains or loses an arm. If $i>t_r$ and $\eta_i \geq \mutilde_h$, then the box $(i+n-k,\mutilde_h)$ gains one arm. And if $\eta_i < \mutilde_h$, the box is in the column which becomes $\mutilde_h$. So our next group is
\[ \group{4}_j' :=  \{ \square = (i,\mutilde_h)\, \mid \, t_r+n-k < i\}\]
and the contribution from the boxes in these columns to $j$ is
\[ \jfour := \prod_{\square \in \group{4}'_j} \frac{1-q^{\leg{\mutildeeta}+1}t^{\armtwo{\mutildeeta}}}{1-q^{\leg{\mutildeeta}+1}t^{\armtwo{\mutildeeta}+1}}\]

We finally account for the boxes which land in $\mutilde_h$. The newly-added box has no arms or legs, so its contribution to $j_\mueta$ is $1-t$. Every other box in the wrapped-around column has one more leg in $\mueta$, which means the power of $q$ in the contribution of the box in $j_\lambdagamma$ is the same as in $j_\mueta$, since in $j_\lambdagamma$ the box is in the nonsymmetric part of the diagram, and in $j_\mueta$ it is in the symmetric part. Let $\square=(i,j)$ be a box not on top, so $j<\mutilde_h$. Then we make three observations:
\begin{enumerate}
    \item Boxes in the newly-increased column of $\mumineta$ have the same number of arms as their counterparts in $\mutildeeta$, so for the sake of future cancellation, we will write in terms of $\mutildeeta$.\\
    \item $a_\lambdatildegamma(t_r+n-k,j) = a_\mutildeeta(h,j)$.\\
    \item $\widetilde{a}_\mutildeeta(h,j) = a_\mutildeeta(h,j)-g_{j-1}$, where $g_\ell := \# \{i>t_r \, \mid \, \eta_i = \ell \}$.\\
\end{enumerate}
Define $\jfive$ to be the rational function contribution of boxes which move to $\mutilde_h$, and its corresponding group of boxes,
\[ \group{5}_j' := \{ \square = (h,i) \, \mid \, 1\leq i \leq \mutilde_h\}.\] Using the above observations, we find
\[ \jfive = \frac{1}{1-t} \prod_{\substack{\square \in \group{5}_j' \\ \square = (h,i) \\
i\neq \mutilde_h}} \frac{1-q^{\leg{\mutildeeta}}t^{\armone{\mutildeeta}+1+g_{i-1}}}{1-q^{\leg{\mutildeeta}}t^{\armone{\mutildeeta}+1}}.\]

With all the groups now indexed, we can rewrite $j_\lambdagamma/j_\mueta$:
\[j_\lambdagamma/j_\mueta = \jone \cdot \jtwo \cdot \jthree \cdot \jfour \cdot \jfive. \]
\begin{figure}
\centering

\begin{tikzpicture}[scale=0.9,fill opacity=0.25,every node/.style={scale=0.9}]
\draw (0,0) -- (7,0);
\draw (0,1) -- (7,1);
\draw (1,2) -- (7,2);
\draw (2,3) -- (7,3);
\draw (3,4) -- (7,4);
\draw (4,5) -- (6,5);
\draw (5,6) -- (6,6);
\draw (0,0) -- (0,1);
\draw (1,0) -- (1,2);
\draw (2,0) -- (2,3);
\draw (3,0) -- (3,4);
\draw (4,0) -- (4,5);
\draw (5,0) -- (5,6);
\draw (6,0) -- (6,6);
\draw (7,0) -- (7,4);
\draw (8,0) -- (8,1) -- (9,1) -- (9,0) -- (8,0);
\draw (10,0) -- (14,0);
\draw (10,1) -- (14,1);
\draw (10,2) -- (14,2);
\draw (11,3) -- (12,3);
\draw (13,3) -- (14,3);
\draw (13,4) -- (14,4);
\draw (13,5) -- (14,5);
\draw (10,0) -- (10,2);
\draw (11,0) -- (11,3);
\draw (12,0) -- (12,3);
\draw (13,0) -- (13,5);
\draw (14,0) -- (14,5);
\draw[dashed] (6,-1) -- (6,7);
\fill[gray] (0,0) rectangle (1,1);
\node[fill opacity = 1] at (.5,.5) {\small $ 0,0$};

\fill[yellow] (1,0) rectangle (2,0.5);
\fill[blue] (1,0.5) rectangle (2,1);
\node[fill opacity = 1] at (1.5,.5) {\small $ 1,2 $};
\fill[yellow] (1,1) rectangle (2,2);
\node[fill opacity = 1] at (1.5,1.5) {\small $ 0,0 $};
\fill[gray] (2,0) rectangle (3,1);
\node[fill opacity = 1] at (2.5,.5) {\small $ 2,5 $};
\fill[gray] (2,1) rectangle (3,2);
\node[fill opacity = 1] at (2.5,1.5) {\small $ 1,3 $};
\fill[gray] (2,2) rectangle (3,3);
\node[fill opacity = 1] at (2.5,2.5) {\small $ 0,0$};
\fill[gray] (3,0) rectangle (4,1);
\node[fill opacity = 1] at (3.5,.5) {\small $ 3,7 $};
\fill[gray] (3,1) rectangle (4,2);
\node[fill opacity = 1] at (3.5,1.5) {\small $ 2,5 $};
\fill[gray] (3,2) rectangle (4,3);
\node[fill opacity = 1] at (3.5,2.5) {\small $ 1,2 $};
\fill[green] (3,3) rectangle (4,4);
\node[fill opacity = 1] at (3.5,3.5) {\small $ 0,0 $};
\fill[gray] (4,0) rectangle (5,1);
\node[fill opacity = 1] at (4.5,.5) {\small $ 4,9 $};
\fill[gray] (4,1) rectangle (5,2);
\node[fill opacity = 1] at (4.5,1.5) {\small $ 3,7 $};
\fill[gray] (4,2) rectangle (5,3);
\node[fill opacity = 1] at (4.5,2.5) {\small $ 2,4 $};
\fill[green] (4,3) rectangle (5,4);
\node[fill opacity = 1] at (4.5,3.5) {\small $ 1,2 $};
\fill[gray] (4,4) rectangle (5,5);
\node[fill opacity = 1] at (4.5,4.5) {\small $ 0,0 $};
\fill[gray] (5,0) rectangle (6,1);
\node[fill opacity = 1] at (5.5,.5) {\small $ 5,11 $};
\fill[gray] (5,1) rectangle (6,2);
\node[fill opacity = 1] at (5.5,1.5) {\small $ 4,9 $};
\fill[gray] (5,2) rectangle (6,3);
\node[fill opacity = 1] at (5.5,2.5) {\small $ 3,6 $};
\fill[green] (5,3) rectangle (6,4);
\node[fill opacity = 1] at (5.5,3.5) {\small $ 2,4 $};
\fill[gray] (5,4) rectangle (6,5);
\node[fill opacity = 1] at (5.5,4.5) {\small $ 1,2 $};
\fill[gray] (5,5) rectangle (6,6);
\node[fill opacity = 1] at (5.5,5.5) {\small $ 0,0 $};
\fill[gray] (6,0) rectangle (7,1);
\node[fill opacity = 1] at (6.5,.5) {\small $ 3,10 $};
\fill[gray] (6,1) rectangle (7,2);
\node[fill opacity = 1] at (6.5,1.5) {\small $ 2,7 $};
\fill[blue] (6,2) rectangle (7,3);
\node[fill opacity = 1] at (6.5,2.5) {\small $ 1,5 $};
\fill[gray] (6,3) rectangle (7,4);
\node[fill opacity = 1] at (6.5,3.5) {\small $ 0,2 $};
\fill[blue] (8,0) rectangle (9,1);
\node[fill opacity = 1] at (8.5,.5) {\small $ 0,2 $};
\node[fill opacity = 1] at (8.5,-.5) {$\gamma_{t_1}$};
\fill[gray] (10,0) rectangle (11,1);
\node[fill opacity = 1] at (10.5,.5) {\small $ 1,3 $};
\fill[blue] (10,1) rectangle (11,2);
\node[fill opacity = 1] at (10.5,1.5) {\small $ 0,1 $};
\node[fill opacity = 1] at (11.5,.5) {\small $ 2,6 $};
\node[fill opacity = 1] at (11.5,1.5) {\small $ 1,4 $};
\node[fill opacity = 1] at (11.5,2.5) {\small $ 0,2 $};
\node[fill opacity = 1] at (11.5,-.5) {$\gamma_{t_2}$};
\fill[gray] (12,0) rectangle (13,1);
\node[fill opacity = 1] at (12.5,.5) {\small $ 1,4 $};
\fill[gray] (12,1) rectangle (13,2);
\node[fill opacity = 1] at (12.5,1.5) {\small $ 0,2 $};
\fill[gray] (13,0) rectangle (14,1);
\node[fill opacity = 1] at (13.5,.5) {\small $ 4,10 $};
\fill[gray] (13,1) rectangle (14,2);
\node[fill opacity = 1] at (13.5,1.5) {\small $ 3,8 $};
\fill[gray] (13,2) rectangle (14,3);
\node[fill opacity = 1] at (13.5,2.5) {\small $ 2,5 $};
\fill[purple] (13,3) rectangle (14,4);
\node[fill opacity = 1] at (13.5,3.5) {\small $ 1,3 $};
\fill[gray] (13,4) rectangle (14,5);
\node[fill opacity = 1] at (13.5,4.5) {\small $ 0,1 $};

\draw (1,-2) -- (1,-1.5) -- (1.5,-1.5) -- (1.5, -2) -- (1, -2);
\fill[green] (1,-2) rectangle (1.5,-1.5);
\node[fill opacity = 1] at (2.7, -1.8) {$\in \group{1}_j$};

\draw (5,-2) -- (5,-1.5) -- (5.5,-1.5) -- (5.5, -2) -- (5, -2);
\fill[yellow] (5,-2) rectangle (5.5,-1.5);
\node[fill opacity = 1] at (6.7, -1.8) {$\in \group{2}_j$};

\draw (9,-2) -- (9,-1.5) -- (9.5,-1.5) -- (9.5, -2) -- (9, -2);
\fill[blue] (9,-2) rectangle (9.5,-1.5);
\node[fill opacity = 1] at (10.7, -1.8) {$\in \group{3}_j$};

\draw (1,-4) -- (1,-3.5) -- (1.5,-3.5) -- (1.5, -4) -- (1, -4);
\fill[purple] (1,-4) rectangle (1.5,-3.5);
\node[fill opacity = 1] at (2.7, -3.8) {$\in \group{4}_j$};

\draw (5,-4) -- (5,-3.5) -- (5.5,-3.5) -- (5.5, -4) -- (5, -4);
\node[fill opacity = 1] at (6.7, -3.8) {$\in \group{5}_j$};

\draw (9,-4) -- (9,-3.5) -- (9.5,-3.5) -- (9.5, -4) -- (9, -4);
\fill[gray] (9,-4) rectangle (9.5,-3.5);
\node[fill opacity = 1] at (10.7, -3.8) {Cancels};

\end{tikzpicture}

\begin{tikzpicture}[scale=0.9,fill opacity=.25,every node/.style={scale=0.9}]

\draw (0,0) -- (7,0);
\draw (0,1) -- (7,1);
\draw (1,2) -- (7,2);
\draw (1,3) -- (7,3);
\draw (2,4) -- (7,4);
\draw (4,5) -- (6,5);
\draw (5,6) -- (6,6);
\draw (0,0) -- (0,1);
\draw (1,0) -- (1,3);
\draw (2,0) -- (2,4);
\draw (3,0) -- (3,4);
\draw (4,0) -- (4,5);
\draw (5,0) -- (5,6);
\draw (6,0) -- (6,6);
\draw (7,0) -- (7,4);
\draw (8,0) -- (8,1) -- (9,1) -- (9,0) -- (8,0);
\draw (8,1) -- (8,2) -- (9,2) -- (9,1);
\draw (10,0) -- (14,0);
\draw (10,1) -- (14,1);
\draw (10,2) -- (11,2);
\draw (10,0) -- (10,2);
\draw (11,0) -- (11,2);
\draw (12,0) -- (12,2);
\draw (13,0) -- (13,5);
\draw (14,0) -- (14,5);
\draw (12,2) -- (14,2);
\draw (13,3) -- (14,3);
\draw (13,4) -- (14,4);
\draw (13,5) -- (14,5);
\draw[dashed] (6,-1) -- (6,7);

\node[fill opacity = 1] at (.5,.5) {\small $ 0,0$};
\fill[gray] (0,0) rectangle (1,1);
\fill[gray] (1,0) rectangle (2,1);
\node[fill opacity = 1] at (1.5,.5) {\small $ 2,5 $};
\fill[gray] (1,1) rectangle (2,2);
\node[fill opacity = 1] at (1.5,1.5) {\small $ 1,3 $};
\fill[gray] (1,2) rectangle (2,3);
\node[fill opacity = 1] at (1.5,2.5) {\small $ 0,0 $};
\node[fill opacity = 1] at (2.5,.5) {\small $ 3,6 $};
\node[fill opacity = 1] at (2.5,1.5) {\small $ 2,4 $};
\node[fill opacity = 1] at (2.5,2.5) {\small $ 1,1 $};
\node[fill opacity = 1] at (2.5,3.5) {\small $ 0,0 $};
\fill[gray] (3,0) rectangle (4,1);
\node[fill opacity = 1] at (3.5,.5) {\small $ 3,7 $};
\fill[gray] (3,1) rectangle (4,2);
\node[fill opacity = 1] at (3.5,1.5) {\small $ 2,5 $};
\fill[gray] (3,2) rectangle (4,3);
\node[fill opacity = 1] at (3.5,2.5) {\small $ 1,2 $};
\fill[green] (3,3) rectangle (4,4);
\node[fill opacity = 1] at (3.5,3.5) {\small $ 0,1 $};
\fill[gray] (4,0) rectangle (5,1);
\node[fill opacity = 1] at (4.5,.5) {\small $ 4,9 $};
\fill[gray] (4,1) rectangle (5,2);
\node[fill opacity = 1] at (4.5,1.5) {\small $ 3,7 $};
\fill[gray] (4,2) rectangle (5,3);
\node[fill opacity = 1] at (4.5,2.5) {\small $ 2,4 $};
\fill[green] (4,3) rectangle (5,4);
\node[fill opacity = 1] at (4.5,3.5) {\small $ 1,3 $};
\fill[gray] (4,4) rectangle (5,5);
\node[fill opacity = 1] at (4.5,4.5) {\small $ 0,0 $};
\fill[gray] (5,0) rectangle (6,1);
\node[fill opacity = 1] at (5.5,.5) {\small $ 5,11 $};
\fill[gray] (5,1) rectangle (6,2);
\node[fill opacity = 1] at (5.5,1.5) {\small $ 4,9 $};
\fill[gray] (5,2) rectangle (6,3);
\node[fill opacity = 1] at (5.5,2.5) {\small $ 3,6 $};
\fill[green] (5,3) rectangle (6,4);
\node[fill opacity = 1] at (5.5,3.5) {\small $ 2,5 $};
\fill[gray] (5,4) rectangle (6,5);
\node[fill opacity = 1] at (5.5,4.5) {\small $ 1,2 $};
\fill[gray] (5,5) rectangle (6,6);
\node[fill opacity = 1] at (5.5,5.5) {\small $ 0,0 $};
\fill[gray] (6,0) rectangle (7,1);
\node[fill opacity = 1] at (6.5,.5) {\small $ 3,10 $};
\fill[gray] (6,1) rectangle (7,2);
\node[fill opacity = 1] at (6.5,1.5) {\small $ 2,7 $};
\fill[blue] (6,2) rectangle (7,3);
\node[fill opacity = 1] at (6.5,2.5) {\small $ 1,6 $};
\fill[gray] (6,3) rectangle (7,4);
\node[fill opacity = 1] at (6.5,3.5) {\small $ 0,2 $};
\fill[yellow] (8,0) rectangle (9,.5);
\fill[blue] (8,0.5) rectangle (9,1);
\node[fill opacity = 1] at (8.5,.5) {\small $ 1,3 $};
\fill[yellow] (8,1) rectangle (9,2);
\node[fill opacity = 1] at (8.5,1.5) {\small $ 0,1 $};
\node[fill opacity = 1] at (8.5,-.5) {$\eta_{t_1}$};
\fill[gray] (10,0) rectangle (11,1);
\node[fill opacity = 1] at (10.5,.5) {\small $ 1,3 $};
\fill[blue] (10,1) rectangle (11,2);
\node[fill opacity = 1] at (10.5,1.5) {\small $ 0,2 $};
\fill[blue] (11,0) rectangle (12,1);
\node[fill opacity = 1] at (11.5,.5) {\small $ 0,1 $};
\node[fill opacity = 1] at (11.5,-.5) {$\eta_{t_2}$};
\fill[gray] (12,0) rectangle (13,1);
\node[fill opacity = 1] at (12.5,.5) {\small $ 1,4 $};
\fill[gray] (12,1) rectangle (13,2);
\node[fill opacity = 1] at (12.5,1.5) {\small $ 0,2 $};
\fill[gray] (13,0) rectangle (14,1);
\node[fill opacity = 1] at (13.5,.5) {\small $ 4,10 $};
\fill[gray] (13,1) rectangle (14,2);
\node[fill opacity = 1] at (13.5,1.5) {\small $ 3,8 $};
\fill[gray] (13,2) rectangle (14,3);
\node[fill opacity = 1] at (13.5,2.5) {\small $ 2,5 $};
\fill[purple] (13,3) rectangle (14,4);
\node[fill opacity = 1] at (13.5,3.5) {\small $ 1,4 $};
\fill[gray] (13,4) rectangle (14,5);
\node[fill opacity = 1] at (13.5,4.5) {\small $ 0,1 $};

\draw (1,-2) -- (1,-1.5) -- (1.5,-1.5) -- (1.5, -2) -- (1, -2);
\fill[green] (1,-2) rectangle (1.5,-1.5);
\node[fill opacity = 1] at (2.7, -1.8) {$\in \group{1}'_j$};

\draw (5,-2) -- (5,-1.5) -- (5.5,-1.5) -- (5.5, -2) -- (5, -2);
\fill[yellow] (5,-2) rectangle (5.5,-1.5);
\node[fill opacity = 1] at (6.7, -1.8) {$\in \group{2}'_j$};

\draw (9,-2) -- (9,-1.5) -- (9.5,-1.5) -- (9.5, -2) -- (9, -2);
\fill[blue] (9,-2) rectangle (9.5,-1.5);
\node[fill opacity = 1] at (10.7, -1.8) {$\in \group{3}'_j$};

\draw (1,-4) -- (1,-3.5) -- (1.5,-3.5) -- (1.5, -4) -- (1, -4);
\fill[purple] (1,-4) rectangle (1.5,-3.5);
\node[fill opacity = 1] at (2.7, -3.8) {$\in \group{4}'_j$};

\draw (5,-4) -- (5,-3.5) -- (5.5,-3.5) -- (5.5, -4) -- (5, -4);
\node[fill opacity = 1] at (6.7, -3.8) {$\in \group{5}'_j$};

\draw (9,-4) -- (9,-3.5) -- (9.5,-3.5) -- (9.5, -4) -- (9, -4);
\fill[gray] (9,-4) rectangle (9.5,-3.5);
\node[fill opacity = 1] at (10.7, -3.8) {Cancels};

\end{tikzpicture}

\caption{Diagrams for $j_{(6,5,4,3,2,1 \mid 4,0,1,0,2,3,2,5)}$ (top) and $j_{(6,5,4,4,3,1 \mid 4,0,2,0,2,1,2,5)}$ (bottom). Each box is filled with (legs, arms).}
\end{figure}

\subsection{Internal cancellations in $\Estar{\lambdatildegamma}\overline{\lambdatildegamma}/\Estar{\mutildeeta}\overline{\mutildeeta}$.} This will be very similar to the cancellations in $j_\lambdagamma/j_\mueta$. We will group things in the same ways, but since we are looking at the diagrams $\lambdatildegamma$ and $\mutildeeta$ now, we will label the groups with the subscript $E$. Recall the formula,
\[ \frac{\Estar{\lambdatildegamma}\overline{\lambdatildegamma}}{\Estar{\mutildeeta}\overline{\mutildeeta}} = \frac{\left( \prod\limits_{i=1}^n \overline{\lambdatildegamma}_i^{\lambdatildegamma_i} \right) \prod\limits_{\square \in \lambdatildegamma} \left(1-q^{-(\ell_\lambdatildegamma(\square)+1)} t^{-a_\lambdatildegamma(\square)}\right)}{\left( \prod\limits_{i=1}^n \overline{\mutildeeta}_i^{\mutildeeta_i} \right) \prod\limits_{\square \in \mutildeeta} \left(1-q^{-(\ell_\mutildeeta(\square)+1)} t^{-a_\mutildeeta(\square)}\right)}.\]

To simplify the products of eigenvalues on the left, recall from \cref{comaximality} and \cref{maximality} that since $I_1$ is maximal, the permutation of columns from $\lambdatildegamma$ to $\mutildeeta$ also permutes the associated eigenvalues, except in the column which gains a box. Combining this with the facts that $\mutilde_h = \gamma_{t_r}+1$ and $\ell'_{\mutildeeta}(h) = \ell'_{\lambdatildegamma}(t_r+n-k)$, we get
\begin{align*}
    \frac{\left( \prod\limits_{i=1}^n \overline{\lambdatildegamma}_i^{\lambdatildegamma_i} \right) }{\left( \prod\limits_{i=1}^n \overline{\mutildeeta}_i^{\mutildeeta_i} \right) } = \frac{\left(\overline{\lambdatildegamma}_{t_r+n-k}\right)^{\gamma_{t_r}}}{\left(\overline{\lambdatildegamma}_{h}\right)^{\mutilde_{h}}} = \frac{\left(q^{\gamma_{t_r}}t^{-\ell'_{\lambdatildegamma}(t_r+n-k)}\right)^{\gamma_{t_r}}}{\left(q^{\mutilde_h}t^{-\ell'_{\mutildeeta}(h)}\right)^{\mutilde_h}} = q^{-2\mutilde_h+1}t^{\ell'_{\mutildeeta}(h).}
\end{align*} 

Once again, if we compare a corresponding box in $\lambdatildegamma$ and $\mutildeeta$ and it has the same arms and legs in both diagrams, those terms cancel. Beginning like before with boxes which are in the symmetric parts of both diagrams, the boxes which have different statistics are in the $(\mutilde_{h}+1)$st row counting from the bottom, so define groups,
\begin{align*}
    \group{1}_E &:= d_{(\mutilde_{n-k}+1)}(\lambdatilde)\\
    \group{1}_E' &:= d_{(\mutilde_{n-k}+1)}(\mutilde)
\end{align*}
The boxes in group $1'_E$ have the same legs and one more arm, namely the column $\mutilde_h$, and so we can write
\begin{align*} \Estarone &:= \frac{ \prod\limits_{\square \in \group{1}_E} \left(1-q^{-(\ell_\lambdatildegamma(\square)+1)} t^{-a_\lambdatildegamma(\square)}\right)}{ \prod\limits_{\square \in \group{1}_E'} \left(1-q^{-(\ell_\mutildeeta(\square)+1)} t^{-a_\mutildeeta(\square)}\right)}\\
&= \prod\limits_{\square \in \group{1}_E'}\frac{ \left(1-q^{-(\ell_\mutildeeta(\square)+1)} t^{-(a_\mutildeeta(\square)-1)}\right)}{  \left(1-q^{-(\ell_\mutildeeta(\square)+1)} t^{-a_\mutildeeta(\square)}\right)}\\
&=  \prod\limits_{\square \in \group{1}_E'}\frac{ \left(t-q^{\ell_\mutildeeta(\square)+1} t^{a_\mutildeeta(\square)}\right)}{  \left(1-q^{\ell_\mutildeeta(\square)+1} t^{a_\mutildeeta(\square)}\right)}\end{align*}
Notice that this matches exactly with the remaining denominator from $\flambdatilde/\fmutilde$, so those will cancel.

Unlike with the $j$'s, because the $E^*$ formula is all in nonsymmetric terms, the statistics of the boxes which cycle from column $\lambda_{n-k}$ to $\eta_1$ (prior to the $T_i$ actions) do not change, so there is in effect no $\group{2}_E$ to account for.

The analogs to $\group{3}_j$ and $\group{3}_j'$ will work exactly the same with $E^*$ as in the $j$'s, since both use the same diagram statistics (nonsymmetric arms). The resulting contribution from these boxes is then
\[\Estarthree := \frac{\prod\limits_{\square \in \group{3}_j}1-q^{-(\leg{\lambdatildegamma}+1)}t^{-\armtwo{\lambdatildegamma}} }{\prod\limits_{\square \in \group{3}_j'} 1-q^{-(\leg{\mutildeeta}+1)}t^{-\armtwo{\mutildeeta}}}.\]

The boxes in $\group{4}_j'$ also behave the same in $E^*$ and $j$, so the contribution for those boxes to the right of $\eta_{t_r}$ which change is
\[ \Estarfour := \prod_{\square \in \group{4}'_j} \frac{1-q^{-(\leg{\mutildeeta}+1)}t^{-(\armtwo{\mutildeeta}-1)}}{1-q^{-(\leg{\mutildeeta}+1)}t^{-\armtwo{\mutildeeta}}}\]

Finally, we address the group of boxes which go from column $\eta_{t_r}$ to $\mutilde_h$. Compare the boxes $(t_r+n-k,i)$ in $\lambdatildegamma$ and $(h,i)$ in $\mutildeeta$. The box in $\mutilde_h$ has the added box as an extra leg, and comparing $a_\lambdatildegamma(t_r+n-k,i)$ to the modified $\widetilde{a}_\mutildeeta(h,i)$, these both count the same columns except that $a_\lambdatildegamma(t_r+n-k,i)$ also counts boxes in columns to the right of $t_r+n-k$ with height $i-1$. We still need to account for the newly-added box, so altogether we have groups,
\begin{align*} \group{5}_E &:= \{\square = (t_r+n-k,i) \mid  1 \leq i \leq \gamma_{t_r} \}\\
\group{5}'_E &:= \{ \square = (h,i) \, \mid \, 1 \leq i \leq \mutilde_h \},
\end{align*}
and contribution of those groups,
\begin{align*}
    \Estarfive &:= \frac{\prod\limits_{\square \in \group{5}_E} 1-q^{-(\leg{\lambdatildegamma}+1)}t^{-\armtwo{\lambdagamma}}}{\prod\limits_{\square \in \group{5}'_E} 1-q^{-(\leg{\mutildeeta}+1)}t^{-\armtwo{\mutildeeta}}}\\
    &= \frac{1}{1-q^{-1}t^{-a_\mutildeeta(h,\mutilde_h)}}\cdot \prod\limits_{\substack{\square \in \group{5}'_E \\ \square = (h,i) \\ i\neq \mutilde_h}} \frac{1-q^{-\leg{\mutildeeta}}t^{-(\armone{\mutildeeta}+g_{i-1})}}{1-q^{-(\leg{\mutildeeta}+1)}t^{-\armtwo{\mutildeeta}}}
\end{align*}

With all boxes enumerated, we have the simplification,

\[\frac{\Estar{\lambdatildegamma}\overline{\lambdatildegamma}}{\Estar{\mutildeeta}\overline{\mutildeeta}} = q^{-2\mutilde_h+1}t^{\ell'_{\mutildeeta}(h)} \cdot \Estarone \cdot \Estarthree \cdot \Estarfour \cdot \Estarfive.\]

\begin{figure}
\centering
\begin{tikzpicture}[scale=0.9,fill opacity=0.25,every node/.style={scale=0.9}]
\draw (0,0) -- (7,0);
\draw (0,1) -- (7,1);
\draw (1,2) -- (7,2);
\draw (1,3) -- (5,3);
\draw (2,4) -- (5,4);
\draw (2,5) -- (4,5);
\draw (3,6) -- (4,6);
\draw (0,0) -- (0,1);
\draw (1,0) -- (1,3);
\draw (2,0) -- (2,5);
\draw (3,0) -- (3,6);
\draw (4,0) -- (4,6);
\draw (5,0) -- (5,4);
\draw (6,0) -- (6,4);
\draw (6,3) -- (7,3);
\draw (6,4) -- (7,4);
\draw (7,0) -- (7,4);
\draw (8,0) -- (8,1) -- (9,1) -- (9,0) -- (8,0);
\draw (10,0) -- (14,0);
\draw (10,1) -- (14,1);
\draw (10,2) -- (14,2);
\draw (11,3) -- (12,3);
\draw (13,3) -- (14,3);
\draw (13,4) -- (14,4);
\draw (13,5) -- (14,5);
\draw (10,0) -- (10,2);
\draw (11,0) -- (11,3);
\draw (12,0) -- (12,3);
\draw (13,0) -- (13,5);
\draw (14,0) -- (14,5);
\draw[dashed] (6,-1) -- (6,7);
\fill[gray] (0,0) rectangle (1,1);
\node[fill opacity = 1] at (.5,.5) {\small $ 0,2$};
\fill[gray] (1,0) rectangle (2,1);
\node[fill opacity = 1] at (1.5,.5) {\small $ 2,7 $};
\fill[gray] (1,1) rectangle (2,2);
\node[fill opacity = 1] at (1.5,1.5) {\small $ 1,4 $};
\fill[gray] (1,2) rectangle (2,3);
\node[fill opacity = 1] at (1.5,2.5) {\small $ 0,3 $};
\fill[gray] (2,0) rectangle (3,1);
\node[fill opacity = 1] at (2.5,.5) {\small $ 4,11 $};
\fill[gray] (2,1) rectangle (3,2);
\node[fill opacity = 1] at (2.5,1.5) {\small $ 3,8 $};
\fill[gray] (2,2) rectangle (3,3);
\node[fill opacity = 1] at (2.5,2.5) {\small $ 2,7$};
\fill[gray] (2,3) rectangle (3,4);
\node[fill opacity = 1] at (2.5,3.5) {\small $ 1,3$};
\fill[green] (2,4) rectangle (3,5);
\node[fill opacity = 1] at (2.5,4.5) {\small $ 0,2$};
\fill[gray] (3,0) rectangle (4,1);
\node[fill opacity = 1] at (3.5,.5) {\small $ 5,13 $};
\fill[gray] (3,1) rectangle (4,2);
\node[fill opacity = 1] at (3.5,1.5) {\small $ 4,10 $};
\fill[gray] (3,2) rectangle (4,3);
\node[fill opacity = 1] at (3.5,2.5) {\small $ 3,9 $};
\fill[gray] (3,3) rectangle (4,4);
\node[fill opacity = 1] at (3.5,3.5) {\small $ 2,5 $};
\fill[green] (3,4) rectangle (4,5);
\node[fill opacity = 1] at (3.5,4.5) {\small $ 1,4 $};
\fill[gray] (3,5) rectangle (4,6);
\node[fill opacity = 1] at (3.5,5.5) {\small $ 0,1 $};
\fill[gray] (4,0) rectangle (5,1);
\node[fill opacity = 1] at (4.5,.5) {\small $ 3,9 $};
\fill[gray] (4,1) rectangle (5,2);
\node[fill opacity = 1] at (4.5,1.5) {\small $ 2,6 $};
\fill[gray] (4,2) rectangle (5,3);
\node[fill opacity = 1] at (4.5,2.5) {\small $ 1,5 $};
\fill[gray] (4,3) rectangle (5,4);
\node[fill opacity = 1] at (4.5,3.5) {\small $ 0,1$};

\fill[blue] (5,0) rectangle (6,1);
\node[fill opacity = 1] at (5.5,.5) {\small $ 1,4 $};
\fill[gray] (5,1) rectangle (6,2);
\node[fill opacity = 1] at (5.5,1.5) {\small $ 0,1 $};

\fill[gray] (6,0) rectangle (7,1);
\node[fill opacity = 1] at (6.5,.5) {\small $ 3,10 $};
\fill[gray] (6,1) rectangle (7,2);
\node[fill opacity = 1] at (6.5,1.5) {\small $ 2,7 $};
\fill[blue] (6,2) rectangle (7,3);
\node[fill opacity = 1] at (6.5,2.5) {\small $ 1,5 $};
\fill[gray] (6,3) rectangle (7,4);
\node[fill opacity = 1] at (6.5,3.5) {\small $ 0,2 $};
\fill[blue] (8,0) rectangle (9,1);
\node[fill opacity = 1] at (8.5,.5) {\small $ 0,2 $};
\node[fill opacity = 1] at (8.5,-.5) {$\gamma_{t_1}$};
\fill[gray] (10,0) rectangle (11,1);
\node[fill opacity = 1] at (10.5,.5) {\small $ 1,3 $};
\fill[blue] (10,1) rectangle (11,2);
\node[fill opacity = 1] at (10.5,1.5) {\small $ 0,1 $};
\node[fill opacity = 1] at (11.5,.5) {\small $ 2,6 $};
\node[fill opacity = 1] at (11.5,1.5) {\small $ 1,4 $};
\node[fill opacity = 1] at (11.5,2.5) {\small $ 0,2 $};
\node[fill opacity = 1] at (11.5,-.5) {$\gamma_{t_2}$};
\fill[gray] (12,0) rectangle (13,1);
\node[fill opacity = 1] at (12.5,.5) {\small $ 1,4 $};
\fill[gray] (12,1) rectangle (13,2);
\node[fill opacity = 1] at (12.5,1.5) {\small $ 0,2 $};
\fill[gray] (13,0) rectangle (14,1);
\node[fill opacity = 1] at (13.5,.5) {\small $ 4,10 $};
\fill[gray] (13,1) rectangle (14,2);
\node[fill opacity = 1] at (13.5,1.5) {\small $ 3,8 $};
\fill[gray] (13,2) rectangle (14,3);
\node[fill opacity = 1] at (13.5,2.5) {\small $ 2,5 $};
\fill[purple] (13,3) rectangle (14,4);
\node[fill opacity = 1] at (13.5,3.5) {\small $ 1,3 $};
\fill[gray] (13,4) rectangle (14,5);
\node[fill opacity = 1] at (13.5,4.5) {\small $ 0,1 $};

\draw (1,-2) -- (1,-1.5) -- (1.5,-1.5) -- (1.5, -2) -- (1, -2);
\fill[green] (1,-2) rectangle (1.5,-1.5);
\node[fill opacity = 1] at (2.7, -1.8) {$\in \group{1}_E$};

\draw (9,-2) -- (9,-1.5) -- (9.5,-1.5) -- (9.5, -2) -- (9, -2);
\fill[blue] (9,-2) rectangle (9.5,-1.5);
\node[fill opacity = 1] at (10.7, -1.8) {$\in \group{3}_E$};

\draw (1,-4) -- (1,-3.5) -- (1.5,-3.5) -- (1.5, -4) -- (1, -4);
\fill[purple] (1,-4) rectangle (1.5,-3.5);
\node[fill opacity = 1] at (2.7, -3.8) {$\in \group{4}_E$};

\draw (5,-4) -- (5,-3.5) -- (5.5,-3.5) -- (5.5, -4) -- (5, -4);
\node[fill opacity = 1] at (6.7, -3.8) {$\in \group{5}_E$};

\draw (9,-4) -- (9,-3.5) -- (9.5,-3.5) -- (9.5, -4) -- (9, -4);
\fill[gray] (9,-4) rectangle (9.5,-3.5);
\node[fill opacity = 1] at (10.7, -3.8) {Cancels};

\end{tikzpicture}

\begin{tikzpicture}[scale=0.9,fill opacity=0.25,every node/.style={scale=0.9}]
\draw (0,0) -- (7,0);
\draw (0,1) -- (7,1);
\draw (1,2) -- (7,2);
\draw (1,3) -- (6,3);
\draw (2,4) -- (6,4);
\draw (2,5) -- (4,5);
\draw (3,6) -- (4,6);
\draw (0,0) -- (0,1);
\draw (1,0) -- (1,3);
\draw (2,0) -- (2,5);
\draw (3,0) -- (3,6);
\draw (4,0) -- (4,6);
\draw (5,0) -- (5,4);
\draw (6,0) -- (6,4);
\draw (6,3) -- (7,3);
\draw (6,4) -- (7,4);
\draw (7,0) -- (7,4);
\draw (8,0) -- (8,1) -- (9,1) -- (9,0) -- (8,0);
\draw (8,1) -- (8,2) -- (9,2) -- (9,1);
\draw (10,0) -- (14,0);
\draw (10,1) -- (14,1);
\draw (10,2) -- (11,2);
\draw (10,0) -- (10,2);
\draw (11,0) -- (11,2);
\draw (12,0) -- (12,2);
\draw (13,0) -- (13,5);
\draw (14,0) -- (14,5);
\draw (12,2) -- (14,2);
\draw (13,3) -- (14,3);
\draw (13,4) -- (14,4);
\draw (13,5) -- (14,5);
\draw[dashed] (6,-1) -- (6,7);

\fill[gray] (0,0) rectangle (1,1);
\node[fill opacity = 1] at (.5,.5) {\small $ 0,2$};
\fill[gray] (1,0) rectangle (2,1);
\node[fill opacity = 1] at (1.5,.5) {\small $ 2,7 $};
\fill[gray] (1,1) rectangle (2,2);
\node[fill opacity = 1] at (1.5,1.5) {\small $ 1,4 $};
\fill[gray] (1,2) rectangle (2,3);
\node[fill opacity = 1] at (1.5,2.5) {\small $ 0,3 $};
\fill[gray] (2,0) rectangle (3,1);
\node[fill opacity = 1] at (2.5,.5) {\small $ 4,11 $};
\fill[gray] (2,1) rectangle (3,2);
\node[fill opacity = 1] at (2.5,1.5) {\small $ 3,8 $};
\fill[gray] (2,2) rectangle (3,3);
\node[fill opacity = 1] at (2.5,2.5) {\small $ 2,7$};
\fill[gray] (2,3) rectangle (3,4);
\node[fill opacity = 1] at (2.5,3.5) {\small $ 1,3$};
\fill[green] (2,4) rectangle (3,5);
\node[fill opacity = 1] at (2.5,4.5) {\small $ 0,3$};
\fill[gray] (3,0) rectangle (4,1);
\node[fill opacity = 1] at (3.5,.5) {\small $ 5,13 $};
\fill[gray] (3,1) rectangle (4,2);
\node[fill opacity = 1] at (3.5,1.5) {\small $ 4,10 $};
\fill[gray] (3,2) rectangle (4,3);
\node[fill opacity = 1] at (3.5,2.5) {\small $ 3,9 $};
\fill[gray] (3,3) rectangle (4,4);
\node[fill opacity = 1] at (3.5,3.5) {\small $ 2,5 $};
\fill[green] (3,4) rectangle (4,5);
\node[fill opacity = 1] at (3.5,4.5) {\small $ 1,5 $};
\fill[gray] (3,5) rectangle (4,6);
\node[fill opacity = 1] at (3.5,5.5) {\small $ 0,1 $};
\node[fill opacity = 1] at (4.5,.5) {\small $ 3,8 $};
\node[fill opacity = 1] at (4.5,1.5) {\small $ 2,5 $};
\node[fill opacity = 1] at (4.5,2.5) {\small $ 1,4 $};
\node[fill opacity = 1] at (4.5,3.5) {\small $ 0,0 $};
\fill[gray] (5,0) rectangle (6,1);
\node[fill opacity = 1] at (5.5,.5) {\small $ 3,9 $};
\fill[gray] (5,1) rectangle (6,2);
\node[fill opacity = 1] at (5.5,1.5) {\small $ 2,6 $};
\fill[gray] (5,2) rectangle (6,3);
\node[fill opacity = 1] at (5.5,2.5) {\small $ 1,5 $};
\fill[gray] (5,3) rectangle (6,4);
\node[fill opacity = 1] at (5.5,3.5) {\small $ 0,1 $};
\fill[gray] (6,0) rectangle (7,1);
\node[fill opacity = 1] at (6.5,.5) {\small $ 3,10 $};
\fill[gray] (6,1) rectangle (7,2);
\node[fill opacity = 1] at (6.5,1.5) {\small $ 2,7 $};
\fill[blue] (6,2) rectangle (7,3);
\node[fill opacity = 1] at (6.5,2.5) {\small $ 1,6 $};
\fill[gray] (6,3) rectangle (7,4);
\node[fill opacity = 1] at (6.5,3.5) {\small $ 0,2 $};
\fill[blue] (8,0) rectangle (9,1);
\node[fill opacity = 1] at (8.5,.5) {\small $ 1,3 $};
\fill[gray] (8,1) rectangle (9,2);
\node[fill opacity = 1] at (8.5,1.5) {\small $ 0,1 $};
\node[fill opacity = 1] at (8.5,-.5) {$\eta_{t_1}$};
\fill[gray] (10,0) rectangle (11,1);
\node[fill opacity = 1] at (10.5,.5) {\small $ 1,3 $};
\fill[blue] (10,1) rectangle (11,2);
\node[fill opacity = 1] at (10.5,1.5) {\small $ 0,2 $};
\fill[blue] (11,0) rectangle (12,1);
\node[fill opacity = 1] at (11.5,.5) {\small $ 0,1 $};
\node[fill opacity = 1] at (11.5,-.5) {$\eta_{t_2}$};
\fill[gray] (12,0) rectangle (13,1);
\node[fill opacity = 1] at (12.5,.5) {\small $ 1,4 $};
\fill[gray] (12,1) rectangle (13,2);
\node[fill opacity = 1] at (12.5,1.5) {\small $ 0,2 $};
\fill[gray] (13,0) rectangle (14,1);
\node[fill opacity = 1] at (13.5,.5) {\small $ 4,10 $};
\fill[gray] (13,1) rectangle (14,2);
\node[fill opacity = 1] at (13.5,1.5) {\small $ 3,8 $};
\fill[gray] (13,2) rectangle (14,3);
\node[fill opacity = 1] at (13.5,2.5) {\small $ 2,5 $};
\fill[purple] (13,3) rectangle (14,4);
\node[fill opacity = 1] at (13.5,3.5) {\small $ 1,4 $};
\fill[gray] (13,4) rectangle (14,5);
\node[fill opacity = 1] at (13.5,4.5) {\small $ 0,1 $};

\draw (1,-2) -- (1,-1.5) -- (1.5,-1.5) -- (1.5, -2) -- (1, -2);
\fill[green] (1,-2) rectangle (1.5,-1.5);
\node[fill opacity = 1] at (2.7, -1.8) {$\in \group{1}'_E$};

\draw (9,-2) -- (9,-1.5) -- (9.5,-1.5) -- (9.5, -2) -- (9, -2);
\fill[blue] (9,-2) rectangle (9.5,-1.5);
\node[fill opacity = 1] at (10.7, -1.8) {$\in \group{3}'_E$};

\draw (1,-4) -- (1,-3.5) -- (1.5,-3.5) -- (1.5, -4) -- (1, -4);
\fill[purple] (1,-4) rectangle (1.5,-3.5);
\node[fill opacity = 1] at (2.7, -3.8) {$\in \group{4}'_E$};

\draw (5,-4) -- (5,-3.5) -- (5.5,-3.5) -- (5.5, -4) -- (5, -4);
\node[fill opacity = 1] at (6.7, -3.8) {$\in \group{5}'_E$};

\draw (9,-4) -- (9,-3.5) -- (9.5,-3.5) -- (9.5, -4) -- (9, -4);
\fill[gray] (9,-4) rectangle (9.5,-3.5);
\node[fill opacity = 1] at (10.7, -3.8) {Cancels};

\end{tikzpicture}
\caption{Diagrams for $E^*_{(1,3,5,6,4,2 \mid 4,0,1,0,2,3,2,5)}$ (top) and $E^*_{(1,3,5,6,4,4 \mid 4,0,2,0,2,1,2,5)}$ (bottom). Each box is filled with (legs, arms).}
\end{figure}

\subsection{Conversion of $p_{I_1}$ terms.} The remaining cancellations happen by combining some diagram statistic expressions involving $j_i$ terms and $E*_i$ terms, and eigenvalue functions from $p_{I_1}$ and $p_2$. This will simplify the formulas substantially, but requires some lengthy computations, all using similar techniques.

\begin{prop}\label{prop1conversion} \begin{equation}\label{pI1conversion1} \jthree \cdot \Estarthree \cdot \prod_{u=1}^r \prod_{j=t_{u-1}+1}^{t_u-1} \frac{\overline{\eta}_{t_u}-t \overline{\eta}_j}{\overline{\eta}_{t_u}-\overline{\eta}_j} = \prod_{u=1}^r \prod_{j=t_{u-1}+1}^{t_u-1} \frac{t\overline{\eta}_{t_u}- \overline{\eta}_j}{\overline{\eta}_{t_u}-\overline{\eta}_j}  \end{equation} \end{prop}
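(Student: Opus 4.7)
My plan is to prove the identity pair-by-pair. For each pair $(u,j)$ with $u\in\{1,\ldots,r\}$ and $j\in[t_{u-1}+1,t_u-1]$, I would exhibit a unique box $\square\in\group{3}_j$ (with partner $\square'\in\group{3}_j'$) whose combined contribution to $\jthree\cdot\Estarthree$ converts the factor $\frac{\overline{\eta}_{t_u}-t\overline{\eta}_j}{\overline{\eta}_{t_u}-\overline{\eta}_j}$ on the left into the factor $\frac{t\overline{\eta}_{t_u}-\overline{\eta}_j}{\overline{\eta}_{t_u}-\overline{\eta}_j}$ on the right. A cardinality count confirms $|\group{3}_j|=\sum_{u=1}^r(t_u-t_{u-1}-1)$, matching the number of product pairs; the bijection itself associates each simple transposition used in the cycles $C_1,\ldots,C_r$ of the $w_{I_1}$-action (with the excluded $\mutilde_h$-column and the columns past position $t_r+n-k$ removed as per the definition of $\group{3}_j$) to exactly one pair, the heights of the two swapped columns being $\eta_{t_u}$ and $\eta_j$.

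With the bijection in hand, I compute the statistics. Since adjacent columns simply exchange positions, the leg is preserved: $\leg{\lambdamingamma}=\leg{\mumineta}=L:=|\eta_{t_u}-\eta_j|-1$. The arm $A:=\armtwo{\lambdamingamma}$ differs from $\armtwo{\mumineta}$ by $\epsilon=\pm 1$, with sign determined by which of $\eta_{t_u},\eta_j$ is larger. Writing $X:=q^{L+1}t^A$, a direct simplification using $1-q^{-a}t^{-b}=(q^at^b-1)/(q^at^b)$ and cancelling common factors gives that the combined contribution of $(\square,\square')$ to $\jthree\cdot\Estarthree$ equals
\[ \frac{t(X-1)}{t^2X-1}\ \text{when } \epsilon=+1,\qquad \frac{Xt-1}{X-t}\ \text{when } \epsilon=-1. \]

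The crux of the proof is to identify $X$ with the correct eigenvalue ratio. Using the formula $\overline{\eta}_i=q^{\eta_i}t^{-\ell'_\eta(i)}$, I would verify that $X=\overline{\eta}_{t_u}/\overline{\eta}_j$ in the $\epsilon=-1$ case and $X=\overline{\eta}_j/(t\,\overline{\eta}_{t_u})$ in the $\epsilon=+1$ case. Cross-multiplying then shows in both cases that the combined contribution equals exactly $\frac{t\overline{\eta}_{t_u}-\overline{\eta}_j}{\overline{\eta}_{t_u}-t\overline{\eta}_j}$, which is precisely the inverse of the LHS factor times the RHS factor, giving the required cancellation. Multiplying over all $(u,j)$ then yields the proposition. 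The main obstacle lies in this third step: while the leg identity $L+1=|\eta_{t_u}-\eta_j|$ is immediate, verifying $A$ equals the correct signed difference of $\ell'$-values requires careful enumeration of the columns of $\mueta$ between positions $j$ and $t_u$ whose heights lie in the range bounded by $\eta_j$ and $\eta_{t_u}$, with separate handling of boundary-height columns that are counted with an offset absorbed exactly by the $\pm 1$ arm change between the two diagrams.
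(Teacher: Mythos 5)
Your plan rests on a one-to-one correspondence between the pairs $(u,j)$ in the double product and the boxes of $\group{3}_j$, together with the claim that each such box has its arm changed by exactly $\pm 1$ and that $|\group{3}_j|=\sum_{u}(t_u-t_{u-1}-1)$. This is where the argument breaks. Maximality of $I_1$ only forbids an intermediate column ($t_{u-1}<j<t_u$) from having height equal to $\eta_{t_u}$; it does not prevent two or more intermediate columns from sharing some other common height $s<\eta_{t_u}$. In that situation the per-transposition rule (the affected box sits in the \emph{taller} of the two swapped columns, one row above the shorter one) shows that every time the column of height $\eta_{t_u}$ passes a column of height $s$, it is the \emph{same} box of the moving column, in row $s+1$, that is affected. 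So several transpositions — hence several pairs $(u,j)$ — correspond to a single element of $\group{3}_j$, whose arm shifts by $g_s=\#\{t_{u-1}<i<t_u \mid \eta_i=s\}$ rather than by $1$. Consequently your cardinality count fails, the quantity $\epsilon$ is not $\pm1$, and the single factor $\bigl(1-q^{\leg{\mutildeeta}+1}t^{\armtwo{\mutildeeta}+g_s+1}\bigr)/\bigl(1-q^{\leg{\mutildeeta}+1}t^{\armtwo{\mutildeeta}+1}\bigr)$ contributed by that box cannot be matched against a single eigenvalue ratio; it has to be matched against the \emph{product} of the $g_s$ factors $\prod_i(\overline{\eta}_{t_u}-t\overline{\eta}_{a_i})/(\overline{\eta}_{t_u}-\overline{\eta}_{a_i})$, whose cancellation requires a telescoping computation using $\overline{\eta}_{a_{g_s}},\overline{\eta}_{a_{g_s}-1},\dots$ differing by successive powers of $t$.

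The paper's proof therefore splits into two genuinely different cases rather than a uniform pair-by-pair bijection: for intermediate columns with $\eta_j>\eta_{t_u}$ the relevant box lies in column $j$ at row $\eta_{t_u}+1$ and your picture (one box per pair, arm change $\pm1$, conversion of the $\jthree$- and $\Estarthree$-contributions into the two eigenvalue ratios) is essentially correct; but for $\eta_j<\eta_{t_u}$ all intermediate columns of a fixed height $s$ must be treated simultaneously through the single box $(t_u+n-k,s+1)$, with its $\Estarthree$-contribution absorbing the whole product of left-hand eigenvalue factors and its $\jthree$-contribution telescoping into the corresponding product of right-hand factors. Without this grouping (or an argument showing how repeated heights are otherwise accounted for), your proof only covers the special case in which all heights strictly between consecutive elements of $I_1$ below $\eta_{t_u}$ are distinct, so as written there is a genuine gap.
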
 
\begin{proof} In order to cancel parts of $p_{I_1}$ with other terms, we first need to convert from eigenvalue expressions to expressions involving diagram statistics. We split this into two cases with $t_{u-1} < j < t_u$, the first with $\eta_j>\eta_{t_u}$, and the second with $\eta_j < \eta_{t_u}$. It is not possible that $\eta_j = \eta_{t_u}$, as that would mean $I_1$ is not maximal. Recall from \cref{eigenvalueformula} the formula for the eigenvalues,
\[\overline{\nu}_i = q^{\nu_i} t^{-l'_\nu(i)}, \hspace*{.5cm} 1\leq i \leq n,\] 
\[l'_\nu(i) = \# \{j<i \mid \nu_j > \nu_i\} + \# \{j>i \mid \nu_j \geq \nu_i\}.\]

\underline{\textbf{Case 1:}} Consider some column $\eta_j$ with $t_{u-1} < j < t_u$, and suppose $\eta_j > \eta_{t_u}$. Our goal is to rewrite the rational function
\[ \frac{\overline{\eta}_{t_u}-t \overline{\eta}_j}{\overline{\eta}_{t_u}-\overline{\eta}_j}\]
in terms of diagram statistics of the box $\square = (j+n-k,\eta_{t_u}+1)$ in the diagram $ \mutildeeta $. First, the difference in column height, $\eta_{j}-\eta_{t_u}$, is one more than the number of legs of $\square$. So $\eta_j-\eta_{t_u} = \leg{\mutildeeta}+1$. For readability, we will write $\lj$ and  $\ltm$ to mean $\ell'_{\mutildeeta}(j+n-k)$ and $\ell'_{\mutildeeta}(t_u+n-k)$ respectively. Now, we find the differences between the $\ell'_\mutildeeta$ values for the columns $\eta_{t_u}$ and $\eta_j$. This will be organized by the location of each column relative to $\eta_j$ and $\eta_{t_u}$, and the height of that column.\\
\begin{center}\begin{tabular}{|c|c|c|c|}
     \hline
     & Counted in $\lj$ and $\ltm$ & Counted only in $\ltm$ & Counted in neither   \\ \hline
     Left of both& $> \eta_{j}$ &  $\eta_{t_u}+1, \dotsc, \eta_j $ & $\leq \eta_{t_u}$  \\ \hline
     Between $\eta_j$ and $\eta_{t_u}$ & $\geq \eta_j$ & $\eta_{t_u}+1, \dotsc, \eta_j-1$& $\leq \eta_{t_u}$\\ \hline 
     Right of both & $\geq \eta_j$ & $\eta_{t_u} , \dotsc, \eta_{j}-1$& $< \eta_{t_u}$ \\ \hline 
\end{tabular}
\end{center}

The final column that is counted and not in the chart is column $\eta_j$, which is counted in $\ltm$. We can compare the above to the arm length $a_\mutildeeta(j,\eta_{t_u}+1)$,
\begin{align*} a_\mutildeeta(j,\eta_{t_u}+1) =  &\#\{1 \leq i < j+n-k \, \mid \, \eta_{t_u}+1 \leq \mutildeeta_j \leq \eta_j  \}| +\\
& \# \{ j+n-k <  i \leq n \, \mid \, \eta_{t_u} \leq \mutildeeta_j \leq \eta_j-1 \}. \end{align*}
Notice that the same columns that are counted in this arm are the ones which contribute only to $\ltm$. While on the surface there seems to be a discrepancy between the right arm of the box and the columns between $\eta_j$ and $\eta_{t_u}$, where $\ltm$ does not count columns of height $\eta_{t_u}$, recall that such a column existing would contradict maximality of the set $I_1$. Additionally, there is one extra box counted in the arm length, which is again the column $\eta_{t_j}$. From this comparison, we obtain the formula,
\[ \ltm = \lj + a_\mutildeeta(j,\eta_{t_u}+1).\]
Further, combining this with the difference in column heights, we get a formula relating the eigenvalues and statistics of $\square = (j,\eta_{t_u}+1)$,
\[ \overline{\eta_j} = \overline{\eta_{t_u}} \cdot q^{\leg{\mutildeeta}+1} t^\armtwo{\mutildeeta}.\]
Using this, we can now simplify the rational function,
\begin{align*}
    \frac{\overline{\eta}_{t_u}-t \overline{\eta}_j}{\overline{\eta}_{t_u}-\overline{\eta}_j} &= \frac{\overline{\eta}_{t_u}-\overline{\eta_{t_u}} \cdot q^{\leg{\mutildeeta}+1} t^{\armtwo{\mutildeeta}+1}}{\overline{\eta}_{t_u}-\overline{\eta_{t_u}} \cdot q^{\leg{\mutildeeta}+1} t^\armtwo{\mutildeeta}}\\
    &= \frac{1-q^{\leg{\mutildeeta}+1} t^{\armtwo{\mutildeeta}+1}}{1-q^{\leg{\mutildeeta}+1} t^{\armtwo{\mutildeeta}}}.
\end{align*}

To continue with the cancellation, recall that $\square = (j,\eta_{t_u}+1) \in \group{3}'_j$. This box has one more arm, namely from the column $\eta_{t_u}$, than the corresponding box $(j,\eta_{t_u}+1) \in \group{3}_j$. This means the contribution of these boxes to $\jthree$ is
\[  \frac{1-q^{\leg{\lambdatildegamma}+1} t^{\armtwo{\lambdatildegamma}+1}}{1-q^{\leg{\mutildeeta}+1} t^{\armtwo{\mutildeeta}+1}} =\frac{1-q^{\leg{\mutildeeta}+1} t^{\armtwo{\mutildeeta}}}{1-q^{\leg{\mutildeeta}+1} t^{\armtwo{\mutildeeta}+1}}. \]
This cancels exactly with the eigenvalue rational function for column $\eta_j$. We can go one step farther and use the contribution of the same box to $\Estarthree$,
\begin{align*}
    \frac{1-q^{-(\leg{\lambdatildegamma}+1)}t^{-\armtwo{\lambdatildegamma}}}{1-q^{-(\leg{\mutildeeta}+1)}t^{-\armtwo{\mutildeeta}}} &= \frac{1-q^{-(\leg{\mutildeeta}+1)}t^{-(\armtwo{(\mutildeeta}-1)}}{1-q^{-(\leg{\mutildeeta}+1)}t^{-\armtwo{\mutildeeta}}}\\
    &=\frac{\overline{\eta_j}-\overline{\eta_j} \cdot q^{-(\leg{\mutildeeta}+1)}t^{-(\armtwo{(\mutildeeta}-1)}}{\overline{\eta_j}-\overline{\eta_j} \cdot q^{-(\leg{\mutildeeta}+1)}t^{-\armtwo{\mutildeeta}}}\\
    &= \frac{t\overline{\eta}_{t_u} - \overline{\eta}_j}{\overline{\eta}_{t_u} - \overline{\eta}_j.}
\end{align*}

\underline{\textbf{Case 2:}} The approach here will be similar to the first case, but now instead of just considering a single column $\eta_j$ with $\eta_j < \eta_{t_u}$, we consider all columns strictly between $\eta_{t_{u-1}}$ and $\eta_{t_u}$ of a given height that is lower than $\eta_{t_u}$. The product of the eigenvalue rational functions for all columns of height $s$ will cancel together with the contribution of $\square = (t_u+n-k,s+1)$ to $\Estarthree$, and the contribution of the same square to $\jthree$ will be equal to the corresponding products of eigenvalues on the right side of \cref{pI1conversion1}. Combined with case 1, this accounts for all of $\jthree$ and $\Estarthree$, as well as all pairs of $t_u$ and $j$ in the product of the eigenvalue rational functions.

Let $s \in \{0,1, \dotsc, \eta_{t_u}-1\}$, and suppose the columns of height $s$ between $\eta_{t_{u-1}}$ and $\eta_{t_u}$ of height $s$ are $\eta_{a_1}, \dotsc, \eta_{a_{g_s}}$, so $g_s = \# \{t_{u-1} < i < t_{u-1} \, \mid \, \eta_i = s \}$. Let $\square = (t_u+n-k,r+1)$. Suppose $\eta_j\in \{ \eta_{a_1}, \dotsc, \eta_{a_{g_s}}\}$ is one of these columns. Like in case 1, comparing column heights, we have $\eta_{t_u} = \eta_j + \leg{\mutildeeta}+1$. There is again a comparison of $\lj$ and $\ltm$: 
\begin{center}\begin{tabular}{|c|c|c|c|}
     \hline
     & Counted in $\lj$ and $\ltm$ & Counted only in $\lj$ & Counted in neither   \\ \hline
     Left of both& $> \eta_{t_u}$ & $s+1, \dotsc, \eta_{t_u}$ & $\leq s$ \\ \hline
     Between $\eta_j$ and $\eta_{t_u}$ & $>\eta_{t_u}$ & $s, \dotsc, \eta_{t_u}$ & $<s$\\ \hline 
     Right of both & $\geq \eta_{t_u}$ & $s, \dotsc, \eta_{t_u}-1$ & $<s$ \\ \hline 
\end{tabular}
\end{center}

There are two differences in boxes counted only in $\lj$ and $\armtwo{\mutildeeta}$:
\begin{enumerate}
    \item $\lj$ counts $\eta_{t_u}$, but there is no corresponding column which contributes to $\armtwo{\mutildeeta}$.
    \item $\lj$ counts boxes between $\eta_j$ and $\eta_{t_u}$ of height $s$, while those are not counted in $\armtwo{\mutildeeta}$.
\end{enumerate}

With these differences accounted for, we get the equation,
\[ \lj = \ltm + (\armtwo{\mutildeeta}+1) + \# \{ j < i < t_u \, \mid \, \eta_i = s\}.\]

We can use this to extract a series of formulas for the eigenvalues, moving from the rightmost column of height $s$ to the left,
\begin{align*}
    \overline{\eta_{a_{g_s}}} &= \overline{\eta_{t_u}} \cdot q^{-(\leg{\mutildeeta}+1)}t^{-(\armtwo{\mutildeeta}+1)}\\
    \overline{\eta_{a_{(g_s-1})}}&= \overline{\eta_{t_u}}\cdot q^{-(\leg{\mutildeeta}+1)}t^{-((\armtwo{\mutildeeta}+1)+1)}\\
    &\vdots \\
    \overline{\eta_{a_1}}&= \overline{\eta_{t_u}}\cdot q^{-(\leg{\mutildeeta}+1)}t^{-((\armtwo{\mutildeeta}+1)+g_s)}
\end{align*}

Taken together, the product of the respective eigenvalue rational functions before substitution is
\[ \frac{\overline{\eta}_{t_u} - t \overline{\eta}_{a_{g_s}}}{\overline{\eta}_{t_u} -  \overline{\eta}_{a_{g_s}}}\cdot \frac{\overline{\eta}_{t_u} - t \overline{\eta}_{a_{g_s-1}}}{\overline{\eta}_{t_u} -  \overline{\eta}_{a_{g_s-1}}} \dotsm \frac{\overline{\eta}_{t_u} - t \overline{\eta}_{a_{1}}}{\overline{\eta}_{t_u} -  \overline{\eta}_{a_{1}}}\]
and after substitution (and suppressing the subscripts, since all diagram statistics are relative to $\mutildeeta$), this becomes
\[ \frac{1-q^{-(\ell(\square)+1)}t^{-a(\square)}}{1-q^{-(\ell(\square)+1)}t^{-(a(\square)+1)}} \cdot \frac{1-q^{-(\ell(\square)+1)}t^{-(a(\square)+1)}}{1-q^{-(\ell(\square)+1)}t^{-(a(\square)+2)}} \dotsm \frac{1-q^{-(\leg{\mutildeeta}+1)}t^{-(a(\square)+g_s-1)}}{1-q^{-(\ell(\square)+1))}t^{-(a(\square)+g_s)}},\]
which simplifies so that
\[  \frac{\overline{\eta}_{t_u} - t \overline{\eta}_{a_{g_s}}}{\overline{\eta}_{t_u} -  \overline{\eta}_{a_{g_s}}}\cdot \frac{\overline{\eta}_{t_u} - t \overline{\eta}_{a_{g_s-1}}}{\overline{\eta}_{t_u} -  \overline{\eta}_{a_{g_s-1}}} \dotsm \frac{\overline{\eta}_{t_u} - t \overline{\eta}_{a_{1}}}{\overline{\eta}_{t_u} -  \overline{\eta}_{a_{1}}} = \frac{1-q^{-(\ell(\square)+1)}t^{-a(\square)}}{1-q^{-(\ell(\square)+1)}t^{-(a(\square)+g_s)}}\]
To complete the cancellations, notice that the columns which are counted in $a_\lambdatildegamma(t_{u-1},s+1)$ but not $a_\mutildeeta(t_u,s+1)$ are exactly those between $\eta_{t_{u-1}}$ and $\eta_{t_u}$ of height $s$, which means the contribution of the boxes $\square' =(t_{u-1},s+1)$ and $\square =(t_{u},s+1)$ to $\Estarthree$ is
\begin{align*}
    \frac{1-q^{-(\ell_{\lambdatildegamma+1)}(\square')}t^{-a_\lambdatildegamma(\square')}}{1-q^{-(\ell_{\mutildeeta+1)}(\square)}t^{-a_\mutildeeta(\square)}} &= \frac{1-q^{-(\ell_{\mutildeeta+1)}(\square)}t^{-(a_\mutildeeta(\square)+g_s)}}{1-q^{-(\ell_{\mutildeeta+1)}(\square)}t^{-a_\mutildeeta(\square)}}
\end{align*}

So the product of rational functions cancels exactly with the box's contribution to $\Estarthree$. Looking at the corresponding contribution of the box to $\jthree$, we have
\begin{align*}
    \frac{1-q^{\ell_\lambdatildegamma(\square')+1}t^{a_\lambdatildegamma(\square')+1}}{1-q^{\ell_\mutildeeta(\square)+1}t^{a_\mutildeeta(\square)+1}} &= \frac{1-q^{\ell(\square)+1}t^{a(\square)+1+g_s}}{1-q^{\ell(\square)+1}t^{a(\square)+1}} \\
    &= \frac{1-q^{\ell(\square)+1}t^{a(\square)+1+g_s}}{1-q^{\ell(\square)+1}t^{a(\square)+1+(g_s-1)}} \dotsm \frac{1-q^{\ell(\square)+1}t^{(a(\square)+1)+1}}{1-q^{\ell(\square)+1}t^{a(\square)+1}}\\
    &= \frac{\overline{\eta_{a_1}}-\overline{\eta_{a_1}} \cdot q^{\ell(\square)+1}t^{a(\square)+1+g_s}}{\overline{\eta_{a_1}}-\overline{\eta_{a_1}} \cdot q^{\ell(\square)+1}t^{a(\square)+(g_s-1)}} \dotsm \frac{\overline{\eta_{a_{g_s}}}-\overline{\eta_{a_{g_s}}} \cdot q^{\ell(\square)+1}t^{(a(\square)+1)+1}}{\overline{\eta_{a_{g_s}}}-\overline{\eta_{a_{g_s}}}\cdot q^{\ell(\square)+1}t^{a(\square)+1}}\\
    &= \frac{\overline{\eta_{a_1}}-t \cdot \overline{\eta_{t_u}}}{\overline{\eta_{a_1}}-\overline{\eta_{t_u}}} \dotsm \frac{\overline{\eta_{a_{g_s}}}-t \cdot \overline{\eta_{t_u}}}{\overline{\eta_{a_{g_s}}}-\overline{\eta_{t_u}}}.
\end{align*}

Altogether, between the two cases, every term from $\jthree$ and $\Estarthree$, and every eigenvalue rational function in the left side of \cref{pI1conversion1} has been either cancelled or converted to the form in the right side of the equation.\qedhere

\end{proof}

\begin{prop}\label{prop2conversion}
\begin{equation} \begin{split} \label{PI1conversion2}\jfour \cdot \jfive \cdot  \Estarfour &\cdot \Estarfive \cdot \prod_{j=t_r+1}^k \frac{q^{-1} \overline{\mutilde_h} -t\overline{\eta}_j}{q^{-1} \overline{\mutilde_h} -\overline{\eta}_j} = \prod_{j=t_r+1}^k \frac{(q^{-1}t) \overline{\mutilde_h} -\overline{\eta}_j}{(q^{-1}) \overline{\mutilde_h} -\overline{\eta}_j}  \\ &\cdot \prod\limits_{\substack{\square \in \group{5}'_E \\ \square = (h,i) \\ i\neq \mutilde_h}} \frac{1-q^{-\leg{\mutildeeta}}t^{-\armone{\mutildeeta}}}{1-q^{-(\leg{\mutildeeta}+1)}t^{-\armtwo{\mutildeeta}}} \cdot \left(\frac{1}{1-t}\right) \cdot \left(\frac{1}{1-q^{-1}t^{-a_\mutildeeta(h,\mutilde_h)}}\right)\end{split}\end{equation}
\end{prop}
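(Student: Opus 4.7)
The plan is to follow the strategy of Proposition \ref{prop1conversion} almost verbatim, with $q^{-1}\overline{\mutilde}_h$ playing the role that $\overline{\eta}_{t_u}$ played there. Since $\mutilde_h = \gamma_{t_r}+1$, by Lemma \ref{eigentwo} the quantity $q^{-1}\overline{\mutilde}_h$ is precisely the eigenvalue a column of height $\gamma_{t_r}$ would carry, so every comparison of $\eta_j$ against $\gamma_{t_r}$ will become a comparison of $\overline{\eta}_j$ against $q^{-1}\overline{\mutilde}_h$ of the same formal type as in the earlier proposition. Before the case analysis I would note that the factors $\frac{1}{1-t}$ from $\jfive$ and $\frac{1}{1-q^{-1}t^{-a_\mutildeeta(h,\mutilde_h)}}$ from $\Estarfive$ appear verbatim on the right-hand side, so they pass through unchanged and encode the unique newly-added box $\square = (h,\mutilde_h)$.

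First I would handle the case $\eta_j > \gamma_{t_r}$ for $j \in [t_r+1,k]$. For such $j$ the box $\square = (j+n-k,\mutilde_h)$ sits in $\group{4}'_j$ inside the diagram $\mutildeeta$, and the left/between/right column-counting argument of Case 1 of Proposition \ref{prop1conversion} should yield the substitution
\[ \overline{\eta}_j = q^{-1}\overline{\mutilde}_h \cdot q^{\ell_\mutildeeta(\square)+1} t^{a_\mutildeeta(\square)}. \]
Plugging this into $(q^{-1}\overline{\mutilde}_h - t\overline{\eta}_j)/(q^{-1}\overline{\mutilde}_h - \overline{\eta}_j)$ and combining with the $\jfour \cdot \Estarfour$ contribution from $\square$ will cancel $\jfour$ and $\Estarfour$ entirely, leaving exactly the right-hand side factor $(q^{-1}t\,\overline{\mutilde}_h - \overline{\eta}_j)/(q^{-1}\overline{\mutilde}_h - \overline{\eta}_j)$.

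Next I would handle the case $\eta_j < \gamma_{t_r}$. Here I would group together the $g_s$ columns $\eta_{a_1},\dotsc,\eta_{a_{g_s}}$ of a common height $s$ lying in $[t_r+1,k]$ and relate each $\overline{\eta}_{a_i}$ to $q^{-1}\overline{\mutilde}_h$ via diagram statistics of the box $\square = (h,s+1) \in \group{5}'_E$ in the newly-added column. As in Case 2 of Proposition \ref{prop1conversion}, the telescoping product of eigenvalue ratios should cancel against the $\Estarfive$ contribution of $\square$, while the $\jfive$ contribution of the same box telescopes into the product of $(q^{-1}t\,\overline{\mutilde}_h - \overline{\eta}_{a_i})/(q^{-1}\overline{\mutilde}_h - \overline{\eta}_{a_i})$ factors on the right. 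The residual factor $(1-q^{-\leg{\mutildeeta}}t^{-\armone{\mutildeeta}})/(1-q^{-(\leg{\mutildeeta}+1)}t^{-\armtwo{\mutildeeta}})$ on the right-hand side will account for the mismatch between the symmetric arm $\widetilde{a}_\mutildeeta$ (used in $\jfive$ since $\square$ lies in the symmetric part of $\mutildeeta$) and the nonsymmetric arm $a_\mutildeeta$ (which governs the column-counting substitution for $\overline{\eta}_{a_i}$).

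The hard part will be precisely this symmetric-versus-nonsymmetric arm discrepancy in the newly-added column. One must track three intertwined pieces in Case 2, namely the telescoped eigenvalue product, $\jfive$, and $\Estarfive$, each using a slightly different arm convention, and verify via $\widetilde{a}_\mutildeeta(h,i) = a_\mutildeeta(h,i) - g_{i-1}$ that the $g_s$-shifts coming from the telescoping combine consistently across the three sources and leave behind exactly the residual $\widetilde{a}/a$ ratio on the right-hand side with no stray $g$-factors. Once that bookkeeping is confirmed, the identity follows by collecting terms.
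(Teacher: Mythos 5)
Your proposal follows the paper's own proof essentially verbatim: the same split into $\eta_j>\gamma_{t_r}$ versus $\eta_j<\gamma_{t_r}$ (equality excluded by maximality of $I_1$), the same eigenvalue substitution $\overline{\eta}_j=\overline{\mutilde}_h\,q^{\ell_\mutildeeta(\square)}t^{a_\mutildeeta(\square)}$ cancelling $\jfour$ and $\Estarfour$ in the first case, the same telescoping over columns of a fixed height against $\Estarfive$ and $\jfive$ in the second, and the same treatment of the newly added box $(h,\mutilde_h)$ and of the $\widetilde{a}_\mutildeeta(h,i)=a_\mutildeeta(h,i)-g_{i-1}$ discrepancy producing the residual product on the right. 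No gaps beyond routine bookkeeping, which is exactly what the paper's appendix carries out.
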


\begin{proof} See \cref{SecondSimplification}. The proof uses the same techniques as \cref{prop1conversion}. \end{proof}

\subsection{Conversion of $p_2$ terms.} 

\begin{prop}\label{prop3conversion}
The eigenvalue function from $p_2$ can be rewritten,
\[ \prod_{j=1}^{h-1} \frac{\overline{\mutilde}_h-t\overline{\mutilde}_j}{\overline{\mutilde}_h-\overline{\mutilde}_j} = \prod_{\substack{\square = (h,s) \in \group{5}'_j \\ s\neq \mutilde_h}} \frac{1-q^{-\leg{\mutildeeta}}t^{-\armone{\mutildeeta}+g_s}}{1-q^{-\leg{\mutildeeta}}t^{-\armone{\mutildeeta}}} \cdot \prod_{d_{(\mutilde_h+1)}(\mutilde)} \frac{1-q^{\leg{\mutildeeta}+1}t^{\armtwo{\mutildeeta}+1}}{1-q^{\leg{\mutildeeta}+1}t^{\armtwo{\mutildeeta}}} \]
where $g_s = \# \{ i \, \mid \, \mutilde_i = s\}$
\end{prop}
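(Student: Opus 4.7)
The plan is to follow the template of Propositions \ref{prop1conversion} and \ref{prop2conversion}: convert each eigenvalue ratio on the left into an expression in diagram statistics via the formula $\overline{\mutilde}_j = q^{\mutilde_j}t^{-\ell'_{\mutildeeta}(j)}$, and then reorganize the product by grouping the columns $\mutilde_1, \dotsc, \mutilde_{h-1}$ according to height. Since $\mutilde_h = \mutilde_{h+1} = \dotsm = \mutilde_{n-k}$ and the entries $\mutilde_1, \dotsc, \mutilde_{h-1}$ are all distinct from $\mutilde_h$, every $j \in \{1, \dotsc, h-1\}$ falls into exactly one of two cases: taller columns with $\mutilde_j > \mutilde_h$ or shorter columns with $\mutilde_j < \mutilde_h$. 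These will account for the second and first products on the right, respectively.

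For the taller columns, each $j$ with $\mutilde_j > \mutilde_h$ corresponds uniquely to a box $\square = (j, \mutilde_h+1) \in d_{(\mutilde_h+1)}(\mutilde)$, and $\leg{\mutildeeta} = \mutilde_j - \mutilde_h - 1$. Substituting into the eigenvalue formula gives
\[ \frac{\overline{\mutilde}_h - t\overline{\mutilde}_j}{\overline{\mutilde}_h - \overline{\mutilde}_j} = \frac{1 - q^{\leg{\mutildeeta}+1}t^{\ell'_{\mutildeeta}(h) - \ell'_{\mutildeeta}(j) + 1}}{1 - q^{\leg{\mutildeeta}+1}t^{\ell'_{\mutildeeta}(h) - \ell'_{\mutildeeta}(j)}}, \]
and one then verifies $\ell'_{\mutildeeta}(h) - \ell'_{\mutildeeta}(j) = \armtwo{\mutildeeta}$ by a tabular count of which columns of $\mutildeeta$ on each side of positions $j$ and $h$ contribute to each $\ell'$, exactly as in Case 1 of Proposition \ref{prop1conversion}. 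This yields the factor appearing in the second product on the right.

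For the shorter columns, group them by common height $s < \mutilde_h$, say $\mutilde_{j_1}, \dotsc, \mutilde_{j_{g_s}}$, and associate them with the single box $\square = (h,s) \in \group{5}'_j$, which has $\leg{\mutildeeta} = \mutilde_h - s$. The key combinatorial claim, mirroring Case 2 of Proposition \ref{prop1conversion}, is that after ordering the columns appropriately the values $\ell'_{\mutildeeta}(h) - \ell'_{\mutildeeta}(j_i) + \armone{\mutildeeta}$ run through the consecutive integers $0, 1, \dotsc, g_s - 1$; this reflects the fact that moving past another column of the same height shifts $\ell'$ by exactly $1$. The corresponding $g_s$ eigenvalue factors then telescope:
\[ \prod_{i=1}^{g_s} \frac{1 - q^{-\leg{\mutildeeta}}t^{-\armone{\mutildeeta}+i}}{1 - q^{-\leg{\mutildeeta}}t^{-\armone{\mutildeeta}+i-1}} = \frac{1 - q^{-\leg{\mutildeeta}}t^{-\armone{\mutildeeta}+g_s}}{1 - q^{-\leg{\mutildeeta}}t^{-\armone{\mutildeeta}}}, \]
producing the $s$-factor of the first product on the right. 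Heights $s$ with $g_s = 0$ contribute trivially on both sides, so they need not be tracked.

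The main obstacle is the box-counting needed to identify $\ell'_{\mutildeeta}(h) - \ell'_{\mutildeeta}(j)$ with $\armtwo{\mutildeeta}$ in the taller case and with $-\armone{\mutildeeta} + (i-1)$ in the shorter case. The bookkeeping is complicated by the fact that $\mutildeeta$ has columns in both the symmetric range $[1,n-k]$ and the nonsymmetric range $[n-k+1,n]$, and that the shorter-column case naturally uses the modified arm $\widetilde{a}$ (which counts symmetric and nonsymmetric columns with slightly different height inequalities) rather than $a$, because $\square = (h,s)$ lies in the symmetric part of $\mutildeeta$. These subtleties are handled by essentially the same tabular case analysis used in Proposition \ref{prop1conversion}, so the argument should go through verbatim after minor adjustments.
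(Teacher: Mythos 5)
Your proposal is correct and is exactly the argument the paper leaves to the reader: the paper's proof of this proposition is omitted with a pointer to the technique of \cref{prop1conversion,prop2conversion}, and your conversion of each eigenvalue ratio via $\overline{\mutilde}_j = q^{\mutilde_j}t^{-\ell'_\mutildeeta(j)}$, the split into columns taller or shorter than $\mutilde_h$, the identification $\ell'_\mutildeeta(h)-\ell'_\mutildeeta(j)=\armtwo{\mutildeeta}$ for boxes of $d_{(\mutilde_h+1)}(\mutilde)$, and the telescoping over columns of a fixed height $s$ reproduce it faithfully. The only small point worth noting is that the counting discrepancy which in \cref{prop1conversion} was resolved by maximality of $I_1$ is here resolved instead by the construction of $\mutilde$ (all columns of height $\mutilde_h$ sit in positions $h,\dotsc,n-k$), which is what makes $\ell'_\mutildeeta(h)-\ell'_\mutildeeta(j_i)+\armone{\mutildeeta}$ run through $0,1,\dotsc,g_s-1$ as you claim.
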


\begin{proof}
Detail omitted, the process is identical to the previous 2 proofs.
\end{proof}

Next, we can cancel most of the $\group{5}'_j$ product from \cref{prop2conversion} and \cref{prop3conversion}.

\begin{prop}\label{group5cance3llation}
\begin{align*} &\prod\limits_{\substack{\square=(h,i) \in \group{5}'_E \\  i\neq \mutilde_h}} \frac{  1-q^{-\leg{\mutildeeta}}t^{-\armone{\mutildeeta}}}{ 1-q^{-(\leg{\mutildeeta}+1)}t^{-\armtwo{\mutildeeta}}} \cdot \prod_{\substack{\square = (h,s) \in \group{5}'_j \\ s\neq \mutilde_h}} \frac{1-q^{-\leg{\mutildeeta}}t^{-\armone{\mutildeeta}+g_s}}{1-q^{-\leg{\mutildeeta}}t^{-\armone{\mutildeeta}}} \\ &\hspace*{2cm}\cdot \left(\frac{1}{1-q^{-1}t^{-a_\mutildeeta(h,\mutilde_h)}}\right)  = \frac{1}{1-q^{-(\ell_{\mutildeeta}(h,1)+1)}t^{-a_\mutildeeta(h,1)}}\end{align*}
\end{prop}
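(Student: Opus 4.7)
The plan is a telescoping argument. The first step is to combine the two products on the left-hand side: since $\group{5}'_E$ and $\group{5}'_j$ coincide (both equal $\{(h,i) : 1 \le i \le \mutilde_h\}$) and the indices $i = \mutilde_h$ are excluded from both, the factor $1-q^{-\ell_\mutildeeta(h,i)}t^{-\widetilde{a}_\mutildeeta(h,i)}$ appears once in each position as a numerator of the first product and once as a denominator of the second product. After this obvious factor-by-factor cancellation the left-hand side reduces to
\[
\prod_{i=1}^{\mutilde_h - 1} \frac{1 - q^{-\ell_\mutildeeta(h,i)}t^{-\widetilde{a}_\mutildeeta(h,i)+g_i}}{1 - q^{-(\ell_\mutildeeta(h,i)+1)}t^{-a_\mutildeeta(h,i)}} \cdot \frac{1}{1 - q^{-1}t^{-a_\mutildeeta(h,\mutilde_h)}}.
\]

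Next I would introduce shorthand $D_i := 1-q^{-(\ell_\mutildeeta(h,i)+1)}t^{-a_\mutildeeta(h,i)}$ and $N_i := 1-q^{-\ell_\mutildeeta(h,i)}t^{-\widetilde{a}_\mutildeeta(h,i)+g_i}$. Because $\ell_\mutildeeta(h,\mutilde_h) = 0$, the trailing factor above is exactly $D_{\mutilde_h}^{-1}$, so the expression becomes $\prod_{i=1}^{\mutilde_h-1}(N_i/D_i)\cdot D_{\mutilde_h}^{-1}$. The core of the proof is then the shift identity $N_i = D_{i+1}$ for $1 \le i \le \mutilde_h - 1$. Equality of the $q$-exponents is immediate from $\ell_\mutildeeta(h,i) = \mutilde_h - i = \ell_\mutildeeta(h,i+1)+1$, and equality of the $t$-exponents reduces to verifying $\widetilde{a}_\mutildeeta(h,i) - a_\mutildeeta(h,i+1) = g_i$.

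This last equality is a straightforward count. Expanding the arm definitions, both $\widetilde{a}_\mutildeeta(h,i)$ and $a_\mutildeeta(h,i+1)$ contain the same right-hand contribution $\#\{r > h : i \le \mutildeeta_r < \mutilde_h\}$, so the difference equals $\#\{r < h : \mutildeeta_r = i\}$. Since $r < h \le n-k$ lies in the symmetric part we have $\mutildeeta_r = \mutilde_r$; and because $\mutilde$ is constructed so that all its entries equal to the maximum $\mutilde_h$ occupy positions $h, h+1, \dotsc, n-k$, and $i < \mutilde_h$, no $r \ge h$ contributes to $\#\{r : \mutilde_r = i\}$. Hence $\#\{r < h : \mutilde_r = i\}$ agrees with the global count $g_i = \#\{r : \mutilde_r = i\}$ inherited from \cref{prop3conversion}.

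With $N_i = D_{i+1}$ established, the product telescopes:
\[
\prod_{i=1}^{\mutilde_h-1}\frac{D_{i+1}}{D_i}\cdot\frac{1}{D_{\mutilde_h}} = \frac{D_{\mutilde_h}}{D_1}\cdot\frac{1}{D_{\mutilde_h}} = \frac{1}{D_1},
\]
which is exactly $1/\bigl(1-q^{-(\ell_\mutildeeta(h,1)+1)}t^{-a_\mutildeeta(h,1)}\bigr)$, the claimed right-hand side. The main obstacle is the arm comparison in the previous paragraph: one must be careful that the ranges in $\widetilde{a}$ and $a$ differ only in their right-side thresholds, that these thresholds happen to cancel when one passes from row $i$ to row $i+1$, and that the global identification of the local count with $g_i$ relies essentially on $\mutilde$ being packed with its tallest columns at positions $h$ through $n-k$.
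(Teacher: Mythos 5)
Your proof is correct and follows essentially the same route as the paper: cancel the $\widetilde{a}$-numerators of the first product against the $\widetilde{a}$-denominators of the second, observe the shift identities $\ell_\mutildeeta(h,i)=\ell_\mutildeeta(h,i+1)+1$ and $\widetilde{a}_\mutildeeta(h,i)-g_i=a_\mutildeeta(h,i+1)$, and telescope down to the box $(h,1)$, with your arm-count verification simply filling in what the paper calls "the observation." One small inaccuracy that does not affect the argument: $\mutilde_h$ need not be the maximum entry of $\mutilde$ (e.g.\ $\mutilde=(1,3,2)$ in the paper's final example); what you actually need, and what the construction does give, is only that every entry of $\mutilde$ equal to $\mutilde_h$ sits in positions $h,\dotsc,n-k$, so no symmetric column of height $i<\mutilde_h$ lies weakly to the right of column $h$.
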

\begin{proof}
The left product numerator and right product denominator cancel directly. The rest comes immediately from the observation that for a box $\square = (h,i)$ in the left product and $\square' = (h,i+1)$ in the right product,
\begin{align*}
    &\armone{\mutildeeta} - g_s = a_\mutildeeta(\square')\\
    & \leg{\mutildeeta} = \ell_\mutildeeta(\square')+1.
\end{align*}
So the contributing terms for each pair of boxes $\square$ and $\square'$ in the left and right products cancel, and what's left is the contribution for the box $(h,1)$ in the left product.
\end{proof}

There is also a small cancellation with one of the terms left over from \cref{prop2conversion},
\begin{prop}
\[\frac{\overline{\mutilde}_h-t^{-n+1}}{1-q^{-(\ell_{\mutildeeta}(h,1)+1)}t^{-a_\mutildeeta(h,1)}} = q^{\mutilde_h}t^{-\ell'_\mutildeeta(h)}\]
\end{prop}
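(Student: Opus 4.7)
The proof is a purely algebraic verification once the eigenvalue formula is applied. I would begin by using \cref{eigenvalueformula} to identify the right-hand side as $\overline{\mutilde}_h$ itself, since $\overline{\mutilde}_h = q^{\mutilde_h} t^{-\ell'_{\mutildeeta}(h)}$. Cross-multiplying the claimed identity reduces it to
\[
\overline{\mutilde}_h - t^{-n+1} = \overline{\mutilde}_h\bigl(1 - q^{-(\ell_{\mutildeeta}(h,1)+1)} t^{-a_{\mutildeeta}(h,1)}\bigr).
\]
Expanding the right side and canceling the common $\overline{\mutilde}_h$, the identity becomes
\[
t^{-n+1} = \overline{\mutilde}_h \cdot q^{-(\ell_{\mutildeeta}(h,1)+1)} t^{-a_{\mutildeeta}(h,1)} = q^{\mutilde_h - \ell_{\mutildeeta}(h,1) - 1}\, t^{-\ell'_{\mutildeeta}(h) - a_{\mutildeeta}(h,1)}.
\]

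Matching exponents of $q$ and $t$ separately, it suffices to establish two sub-identities: (i) $\ell_{\mutildeeta}(h,1) = \mutilde_h - 1$, and (ii) $\ell'_{\mutildeeta}(h) + a_{\mutildeeta}(h,1) = n - 1$. Identity (i) is immediate: the box $(h,1)$ sits at the bottom of a column of height $\mutilde_h$, so by the definition of the leg it has exactly $\mutilde_h - 1$ boxes above it in that column.

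For (ii), the combinatorial content, I would argue that the $n-1$ positions $j \neq h$ of $\mutildeeta$ are each counted exactly once between the two statistics, by partitioning them according to the sign of $j - h$ and the relation between $\mutildeeta_j$ and $\mutilde_h$. Here I would invoke the structural constraints on $\mutildeeta$ coming from \cref{ourconventions}: the short columns of $\mutilde$ are strictly increasing $\mutilde_1 < \dotsm < \mutilde_{h-1} < \mutilde_h$, while the nonsymmetric columns $\eta$ are controlled by the maximality of $I_1$ relative to $\lambdatildegamma$. These conditions force the naive overlap between $\ell'$ (which counts $\mutildeeta_j \geq \mutilde_h$ on the left and $\mutildeeta_j > \mutilde_h$ on the right) and $a$ (which counts $1 \leq \mutildeeta_j \leq \mutilde_h$ on the left and $0 \leq \mutildeeta_j < \mutilde_h$ on the right) to balance against the naive omissions, yielding the total $n-1$. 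This balancing is the main obstacle, since a direct check without the Pieri-support structure would either double-count or miss certain positions; once the structural constraints are used, identity (ii) and hence the proposition follow immediately.
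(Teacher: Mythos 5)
Your proof takes essentially the same route as the paper: the paper also treats this as a routine eigenvalue conversion whose only content is the identity $\ell'_{\mutildeeta}(h) = n - a_{\mutildeeta}(h,1) - 1$ (your identity (ii)), combined with the trivial fact that the box $(h,1)$ has leg $\mutilde_h-1$ (your identity (i)). The only remark worth making is that with the paper's working convention $\ell'_\nu(i)=\#\{j<i \mid \nu_j>\nu_i\}+\#\{j>i \mid \nu_j\geq\nu_i\}$, each position $j\neq h$ is counted exactly once between $\ell'_{\mutildeeta}(h)$ and $a_{\mutildeeta}(h,1)$ (the only possible exception being zero columns to the left of $h$), so the column comparison is essentially automatic and does not actually require the maximality of $I_1$ or the Pieri-support structure you invoke.
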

\begin{proof}
Again, the approach here is the same as the other eigenvalue conversions. The only important thing to note is this makes use of the identity,
\[ \ell'_\mutildeeta(h) = n-a_\mutildeeta(h,1) - 1,\]
which comes from comparing the columns counted in $a_\mutildeeta(h,1)$ and in $\ell'_\mutildeeta(h)$ in the same way we have done up to this point.
\end{proof}

We have one more major cancellation to compute.

\begin{prop}
\[ \jone \cdot \frac{1-t^{m+1}}{1-t} \cdot \prod_{d_{(\mutilde_h+1)}(\mutilde)} \frac{1-q^{\leg{\mutildeeta}+1}t^{\armtwo{\mutildeeta}+1}}{1-q^{\leg{\mutildeeta}+1}t^{\armtwo{\mutildeeta}}} = 1\]
\end{prop}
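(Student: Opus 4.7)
The plan is to split the boxes of $\group{1}_j'$ into two groups according to whether their column in $\mutilde$ has height exactly $\mutilde_h$ or strictly greater, then show separately that the same-height group cancels the explicit factor $(1-t^{m+1})/(1-t)$ by telescoping, while the taller-column group cancels the product over $d_{(\mutilde_h+1)}(\mutilde)$ box-by-box.

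First I would unpack the structure of $\mutilde$. In our conventions the $m+1$ columns of increased height sit at positions $h,\dotsc,n-k$ of $\mutilde$ (the newly-added box at $(h,\mutilde_h)$), while the remaining entries satisfy $\mutilde_1 < \dotsm < \mutilde_{h-1}$ and are all distinct from $\mutilde_h$. Writing $T := \{i < h : \mutilde_i > \mutilde_h\}$ and $\tau := |T|$, strict monotonicity forces $T = \{h-\tau,\dotsc,h-1\}$. Since $d_{\mutilde_h}(\mutilde)$ is the row-$\mutilde_h$ boxes of columns of height at least $\mutilde_h$, we get a decomposition $\group{1}_j' = A \sqcup B$ with $A = \{(j,\mutilde_h) : h < j \le n-k\}$ the $m$ same-height top boxes and $B = \{(i,\mutilde_h) : i \in T\}$ the interior boxes of taller columns; moreover $B \leftrightarrow d_{(\mutilde_h+1)}(\mutilde)$ via $(i,\mutilde_h) \leftrightarrow (i,\mutilde_h+1)$ is a natural bijection.

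For the $A$-contribution I would observe that the newly-added box must correspond in $\mumineta$ to the leftmost same-height column (otherwise the excluded box has factor $1-q^0 t^0 = 0$ and $\jone$ would be ill-posed); the remaining $m$ same-height boxes then have $\leg{\mumineta} = 0$ and $\armone{\mumineta} = s$ for $s = 1,\dotsc,m$, so the $A$-piece telescopes:
\[
\prod_{s=1}^{m} \frac{1-t^{s}}{1-t^{s+1}} \;=\; \frac{1-t}{1-t^{m+1}},
\]
which exactly cancels the explicit factor $(1-t^{m+1})/(1-t)$.

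The main step, and the place I expect the most work, is the pairing for $B$. For $i = h-\tau+s \in T$ the column at position $i$ in $\mutilde$ lands at position $p_i := h-\tau+m+1+s$ in the antidominant $\mumineta$, and I would verify by a casework over columns shorter than, equal to, or strictly taller than $\mutilde_h$ that $\leg{\mumineta}(p_i,\mutilde_h) = \mutilde_i - \mutilde_h = \leg{\mutildeeta}(i,\mutilde_h+1)+1$ and $\armone{\mumineta}(p_i,\mutilde_h) = \armtwo{\mutildeeta}(i,\mutilde_h+1)$. The delicate point is that the $m+1$ same-height columns shift side between the two counts — they contribute to the right count of $(i,\mutilde_h+1)$ under $\armtwo$ but to the left count of $(p_i,\mutilde_h)$ under $\armone$ — while the asymmetric boundary conventions ($\widetilde a$ requires $\nu_r \ge j$ on the right, whereas $a$ allows $\nu_r = j-1$) combine with the row offset $\mutilde_h \leftrightarrow \mutilde_h+1$ to make the nonsymmetric contribution $|\{j : \mutilde_h \le \eta_j < \mutilde_i\}|$ appear identically on both sides. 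Once these statistic equalities are established, the $(p_i,\mutilde_h)$-factor $(1-q^Lt^A)/(1-q^Lt^{A+1})$ and the $(i,\mutilde_h+1)$-factor $(1-q^Lt^{A+1})/(1-q^Lt^A)$ (with $L = \mutilde_i-\mutilde_h$, $A = s+m+1+|\{j:\mutilde_h\le \eta_j < \mutilde_i\}|$) are mutual reciprocals and cancel, so the $B$-piece of $\jone$ cancels the $d_{(\mutilde_h+1)}(\mutilde)$-product term-by-term. Combined with the telescoping from the previous step, this gives the claimed identity.
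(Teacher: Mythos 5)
Your proposal is correct and follows essentially the same route as the paper: split $\group{1}'_j$ into the $m$ top boxes of the same-height columns, whose factors telescope to $(1-t)/(1-t^{m+1})$ and cancel the explicit factor, and the boxes lying under taller columns, which cancel box-by-box against the $d_{(\mutilde_h+1)}(\mutilde)$ product after matching legs (via the $+1$ shift) and identifying the modified arm below with the standard arm above. The only difference is that you spell out the leg/arm bookkeeping (and the choice of excluded box) a bit more explicitly than the paper does, which is harmless.
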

\begin{proof}
Recall the formula for $\jone$,
\[ \jone = \prod_{\square \in \group{1}_j'} \frac{1-q^{\leg{\mumineta}}t^{\armone{\mumineta}}}{1-q^{\leg{\mumineta}}t^{\armone{\mumineta}+1}},\]
where $\group{1}'_j$ is $d_{\mutilde_h}(\mutilde)$ but excluding the newly-added box. The cancellation occurs by breaking this product into two pieces. The first will be those boxes in $\group{1}'_j$ which have no boxes above them (i.e., boxes in columns of height $\mutilde_h$), and the second is those which have boxes above them.

If we consider first the boxes which have no boxes above them, notice each of them has 0 legs, and the number of modified arms in $\armone{\mumineta}$ are $1, 2, \dotsc, m$, where $m = \#\{ 1\leq i \leq n-k \mid \, \mutilde_i = \mutilde_1  \}-1$. The contribution of all those boxes to $\jone$ is then
\[ \frac{1-t}{1-t^2}\dotsm \frac{1-t^m}{1-t^{m+1}}. \]
So those contributions cancel with $\frac{1-t^{m+1}}{1-t}$. For the remaining boxes, we compare the contribution of one of the remaining boxes $\square = (i,j)$ in the $\group{1}'_j$ product with the corresponding box above it $\square'$ in the $d_{(\mutilde_h+1)}(\mutilde)$ product. The diagram $\mumineta$ has boxes of height $\mutilde_h$ to the left of $i$, which means they are counted in the modified arm of $\square$, while $\mutildeeta$ has boxes of height $\mutilde_h$ to the right of $\square'$, which are counted in the arm of $\square'$. All other columns are counted identically in $\armone{\mumineta}$ and $a_\mutildeeta(\square')$. The powers of $q$ match as well, with the extra leg accounted for in the $+1$ on the $q$-powers of the $d_{(\mutilde_h+1)}(\mutilde)$ product. Everything cancels as desired.
\end{proof}

Considering all the cancellations up to this point, we state the final cancelled form of the expansion of $e_1[x_1,\dotsc,x_{n-k}] \JJ$.

\begin{thm} The coefficients in the expansion,

\[ e_1[x_1,\dotsc,x_{n-k}]\JJ = \sum_{\mueta \in \MMM}  \CC \mathcal{J}_{\mueta}, \]

are given by the formula,

\begin{equation}\label{simplifiedpieri} \CC = \prod_{ \square \in d_{(\lambdatilde_{n-k}+1)}(\lambda^-)}\frac{t-q^{\ell_{\lambdamingamma}(\square)+1}t^{a_{\lambdamingamma}(\square)+1}}{1-q^{\ell_{\lambdamingamma}(\square)+1}t^{a_{\lambdamingamma}(\square)+1}} \cdot \jtwo \cdot p'_{I_1} \cdot \left(\frac{1}{1-t}\right) \cdot (q^{-\mutilde_h+1}) \cdot \overline{\lambdatildegamma}_{n-k}\end{equation}
where $p'_{I_1}$ is the modified $p_{I_1}$,
\[ p'_{I_1} := \left(\frac{(t-1)q^{-1} \overline{\mutilde}_{\hIone}}{q^{-1} \overline{\mutilde}_{\hIone}-\overline{\eta}_{t_r}}\right)  \prod_{u=1}^{r-1} \left(\frac{(t-1) \overline{\eta}_{t_{u+1}}}{ \overline{\eta}_{t_{u+1}}-\overline{\eta}_{t_{u}}} \right)  \prod_{j=t_r+1}^{k} \left( \frac{(q^{-1}t) \overline{\mutilde}_{\hIone} -  \overline{\eta}_j}{q^{-1} \overline{\mutilde}_{\hIone} -  \overline{\eta}_j}\right)\prod_{u=1}^{r} \prod_{j=t_{u-1}+1}^{t_{u}-1} \left( \frac{t\overline{\eta}_{t_u} -  \overline{\eta}_j}{ \overline{\eta}_{t_{u}} -  \overline{\eta}_j} \right),\]

with $0 = t_0 < t_1 < \dotsm < t_r < t_{r+1}  = k+1$, and we recall
\[\jtwo  = \prod\limits_{\square \in \group{2}_j}\frac{1-q^{\leg{\lambdamingamma}}t^{\armone{\lambdamingamma}+1} }{ 1-q^{\leg{\lambdamingamma}+1}t^{\armone{\lambdamingamma}+1+m_{j-1}(\eta)}},\]
where $\group{2}_j$ is the column that is moved from the symmetric part of the diagram to the nonsymmetric part.
\end{thm}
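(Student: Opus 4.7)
The plan is to start from the unsimplified expression for $\CC$ given in Corollary~\ref{Jpieri} and then apply in sequence all of the simplifications proven in this section, bookkeeping which factors are consumed and which remain. Substituting the decompositions
\[
\frac{j_\lambdagamma}{j_\mueta} = \jone \cdot \jtwo \cdot \jthree \cdot \jfour \cdot \jfive, \qquad \frac{\Estar{\lambdatildegamma}\overline{\lambdatildegamma}}{\Estar{\mutildeeta}\overline{\mutildeeta}} = q^{-2\mutilde_h+1} t^{\ell'_\mutildeeta(h)} \cdot \Estarone \cdot \Estarthree \cdot \Estarfour \cdot \Estarfive,
\]
together with the cancelled form of $\flambdatilde/\fmutilde$, the first cancellation to make is that $\Estarone$ exactly kills the denominator product $\prod_{d_{(\mutilde_{n-k}+1)}(\mutilde)}$ from $\flambdatilde/\fmutilde$ (this is the observation immediately following the definition of $\Estarone$). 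What remains from the $f$-ratio is precisely the product $\prod_{ \square \in d_{(\lambdatilde_{n-k}+1)}(\lambda^-)}(t - q^{\ell+1}t^{a+1})/(1-q^{\ell+1}t^{a+1})$ appearing in the statement.

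Next I would split $p_{I_1}$ into its four factors (the $q^{-1}\overline{\mutilde}_h / \overline{\eta}_{t_r}$ prefactor, the $(t-1)\overline{\eta}_{t_u}$ product, the $\prod_{j=t_r+1}^{k}$ product, and the $\prod_u \prod_{j=t_{u-1}+1}^{t_u-1}$ product) and feed the last two into Propositions~\ref{prop1conversion} and~\ref{prop2conversion} respectively. Proposition~\ref{prop1conversion} consumes $\jthree$, $\Estarthree$, and the inner double product of $p_{I_1}$, leaving the cleaner form $\prod_u \prod_j (t\overline{\eta}_{t_u}-\overline{\eta}_j)/(\overline{\eta}_{t_u}-\overline{\eta}_j)$ that appears inside $p'_{I_1}$. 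Proposition~\ref{prop2conversion} consumes $\jfour$, $\jfive$, $\Estarfour$, $\Estarfive$, and the $\prod_{j=t_r+1}^{k}$ piece of $p_{I_1}$, producing the factor $\prod_{j=t_r+1}^k (q^{-1}t\overline{\mutilde}_h - \overline{\eta}_j)/(q^{-1}\overline{\mutilde}_h - \overline{\eta}_j)$ of $p'_{I_1}$ plus three residual pieces: a product over $\group{5}'_E$, the factor $1/(1-t)$, and the factor $1/(1-q^{-1}t^{-a_\mutildeeta(h,\mutilde_h)})$.

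Having done this, turn to $p_2$. Its eigenvalue product $\prod_{j=1}^{h-1}(\overline{\mutilde}_h - t\overline{\mutilde}_j)/(\overline{\mutilde}_h - \overline{\mutilde}_j)$ is converted by Proposition~\ref{prop3conversion} into a $\group{5}'_j$ product times a $d_{(\mutilde_h+1)}(\mutilde)$ product. The $\group{5}'_E$ residual from Proposition~\ref{prop2conversion}, the $\group{5}'_j$ product from Proposition~\ref{prop3conversion}, and the residual $1/(1-q^{-1}t^{-a_\mutildeeta(h,\mutilde_h)})$ are then collapsed by Proposition~\ref{group5cance3llation} into a single term $1/(1-q^{-(\ell_\mutildeeta(h,1)+1)}t^{-a_\mutildeeta(h,1)})$. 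Combined with the factor $\overline{\mutilde}_h - t^{-n+1}$ from $p_2$ via the next proposition, this becomes $q^{\mutilde_h}t^{-\ell'_\mutildeeta(h)}$, and multiplying by the prefactor $q^{-2\mutilde_h+1}t^{\ell'_\mutildeeta(h)}$ from the $\Estar$ ratio produces the clean factor $q^{-\mutilde_h+1}$ in the final formula. Finally, the last proposition of the section uses $\jone$ to cancel $(1-t^{m+1})/(1-t)$ from $p_2$ against the $d_{(\mutilde_h+1)}(\mutilde)$ product from Proposition~\ref{prop3conversion}, leaving nothing. Collecting everything that survives --- the $d_{(\lambdatilde_{n-k}+1)}(\lambda^-)$ product, $\jtwo$ (never touched), $p'_{I_1}$, $1/(1-t)$, $q^{-\mutilde_h+1}$, and the unchanged $\overline{\lambdatildegamma}_{n-k}$ --- reproduces \eqref{simplifiedpieri} exactly.

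The main obstacle is purely bookkeeping: seven separate propositions must together consume every factor of the original expression, and any misallocation of a single eigenvalue product or diagram-statistic term between $p_{I_1}$, $p_2$, $\jthree/\jfour/\jfive$, or $\Estarthree/\Estarfour/\Estarfive$ would leave a spurious residual. I would guard against this by writing the chain of equalities as a single display and annotating each reduction with the label of the proposition that justifies it; I would also sanity-check the final answer against the small case $n=3$, $k=1$, $\lambda = (1)$, $\gamma = (0)$ to confirm that no factor of $q$, $t$, or eigenvalue has been dropped.
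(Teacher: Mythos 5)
Your proposal is correct and follows essentially the same route as the paper, which likewise obtains the theorem by starting from \cref{Jpieri} and chaining through the Section~\ref{Cancellation} cancellations in exactly the order you describe ($\Estarone$ against the $\flambdatilde/\fmutilde$ denominator, \cref{prop1conversion} absorbing $\jthree$, $\Estarthree$ and the double product of $p_{I_1}$, \cref{prop2conversion} absorbing $\jfour$, $\jfive$, $\Estarfour$, $\Estarfive$ and the $\prod_{j=t_r+1}^{k}$ factor, then \cref{prop3conversion}, the $\group{5}'$ collapse, the $q^{\mutilde_h}t^{-\ell'_\mutildeeta(h)}$ identity, and the final $\jone$ cancellation), leaving precisely the surviving factors in \cref{simplifiedpieri}. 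One trivial slip: the word ``respectively'' in your second paragraph pairs the two $p_{I_1}$ factors with the wrong propositions, but your following sentences allocate them correctly, so the bookkeeping is unaffected.
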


We end with the following example of a coefficient $\CC$ using \cref{Jpieri} and  \cref{simplifiedpieri}.

\begin{ex}
    We will compute $\mathcal{A}^{(3,1,1 \, \mid \, 1,0,1)}_{(3,2,1\, \mid \, 1,0,1)}$ in two ways. First, note $I_1 = \{1,3\}$, and
    \[ \lambdatildegamma = (1,3,1\, \mid \, 1,0,1), \qquad \mutildeeta = (1,3,2 \, \mid \, 1,0,1).\]
    The eigenvalues are
    \[ \overline{\lambdatildegamma} = (qt^{-4}, q^3, qt^{-3} \, \mid \, qt^{-2}, t^{-5}, qt^{-1}), \qquad \overline{\mutildeeta} = (qt^{-4}, q^3, \underline{q^2t^{-1}} \, \mid \, \underline{qt^{-3}}, t^{-5}, \underline{qt^{-2}})\]
    The underlined terms in $\overline{\mutildeeta}$ are $\overline{\mutilde}_h$, $\overline{\eta}_{t_1}$, and $\overline{\eta}_{t_2}=\overline{\eta}_{t_r}$ respectively. The following diagrams are used to compute $j_{\lambdagamma}$ and $j_\mueta$ respectively, filled with (leg,arm), and arms are modified in the symmetric columns and standard in the nonsymmetric columns:

\begin{center}
    \begin{tikzpicture}[scale=1,fill opacity=0.25,every node/.style={scale=1}]

\draw (0,0) -- (0,1) -- (4,1) -- (4,0) -- (0,0);
\draw (1,0) -- (1,1);
\draw (2,0) -- (2,3) -- (3,3) -- (3,0);
\draw (2,2) -- (3,2);
\draw (5,0) -- (5,1) -- (6,1) -- (6,0) -- (5,0);

\draw[dashed] (3,0) -- (3,-1) -- (6,-1) -- (6,0) -- (3,0);
\draw[dashed] (4,0) -- (4,-1);
\draw[dashed] (5,0) -- (5,-1);

\draw[dashed] (3,-1) -- (3,3.5);

\node[fill opacity = 1] at (.5,.5) {\small $ 0,0$};
\node[fill opacity = 1] at (1.5,.5) {\small $ 0,1$};
\node[fill opacity = 1] at (2.5,.5) {\small $ 2,4$};
\node[fill opacity = 1] at (2.5,1.5) {\small $ 1,0$};
\node[fill opacity = 1] at (2.5,2.5) {\small $ 0,0$};
\node[fill opacity = 1] at (3.5,.5) {\small $ 0,3$};
\node[fill opacity = 1] at (5.5,.5) {\small $ 0,3$};
\fill[gray] (0,0) rectangle (1,1);
\fill[gray] (2,2) rectangle (3,3);
\fill[gray] (2,0) rectangle (3,1);
\end{tikzpicture} \hspace*{2cm}
 \begin{tikzpicture}[scale=1,fill opacity=0.25,every node/.style={scale=1}]

\draw (0,0) -- (0,1) -- (4,1) -- (4,0) -- (0,0);
\draw (1,0) -- (1,2) -- (2,2);
\draw (2,0) -- (2,3) -- (3,3) -- (3,0);
\draw (2,2) -- (3,2);
\draw (5,0) -- (5,1) -- (6,1) -- (6,0) -- (5,0);
\draw[dashed] (3,0) -- (3,-1) -- (6,-1) -- (6,0) -- (3,0);
\draw[dashed] (4,0) -- (4,-1);
\draw[dashed] (5,0) -- (5,-1);

\draw[dashed] (3,-1) -- (3,3.5);

\fill[gray] (0,0) rectangle (1,1);
\fill[gray] (2,2) rectangle (3,3);
\fill[gray] (2,0) rectangle (3,1);

\node[fill opacity = 1] at (.5,.5) {\small $ 0,0$};
\node[fill opacity = 1] at (1.5,.5) {\small $ 1,3$};
\node[fill opacity = 1] at (2.5,.5) {\small $ 2,4$};
\node[fill opacity = 1] at (2.5,1.5) {\small $ 1,1$};
\node[fill opacity = 1] at (2.5,2.5) {\small $ 0,0$};
\node[fill opacity = 1] at (1.5, 1.5) {\small $ 0,0$};
\node[fill opacity = 1] at (3.5,.5) {\small $ 0,2$};
\node[fill opacity = 1] at (5.5,.5) {\small $ 0,2$};
\end{tikzpicture}

\end{center}

Cancelling the gray box terms which are the same in the numerator and denominator, we get
\[ \frac{j_\lambdagamma}{j_\mueta} = \frac{(1-qt)(1-t^2)(1-qt^4)(1-qt^4)}{(1-t)(1-qt^2)(1-qt^4)(1-qt^3)(1-qt^3)}.\]

Next we build the diagrams for $\lambdatildegamma$ and $\mutildeeta$ respectively, filled with (leg,arm) with all standard (non-modified) arms:

\begin{center}
    \begin{tikzpicture}[scale=1,fill opacity=.3,every node/.style={scale=1}]

\draw (0,0) -- (0,1) -- (4,1) -- (4,0) -- (0,0);
\draw (1,0) -- (1,1);
\draw (2,0) -- (2,3) -- (1,3) -- (1,0);
\draw (2,2) -- (1,2);
\draw (5,0) -- (5,1) -- (6,1) -- (6,0) -- (5,0);
\draw (3,0) -- (3,1);

\draw[dashed] (3,-0.5) -- (3,3.5);

\node[fill opacity = 1] at (.5,.5) {\small $ 0,1$};
\node[fill opacity = 1] at (1.5,.5) {\small $ 2,5$};
\node[fill opacity = 1] at (2.5,.5) {\small $ 0,2$};
\node[fill opacity = 1] at (1.5,1.5) {\small $ 1,3$};
\node[fill opacity = 1] at (1.5,2.5) {\small $ 0,0$};
\node[fill opacity = 1] at (3.5,.5) {\small $ 0,3$};
\node[fill opacity = 1] at (5.5,.5) {\small $ 0,3$};
\fill[gray] (0,0) rectangle (1,1);
\fill[gray] (1,0) rectangle (2,1);
\fill[gray] (1,1) rectangle (2,2);
\fill[gray] (2,0) rectangle (3,1);
\end{tikzpicture} \hspace*{2cm} \begin{tikzpicture}[scale=1,fill opacity=.3,every node/.style={scale=1}]

\draw (0,0) -- (0,1) -- (4,1) -- (4,0) -- (0,0);
\draw (1,0) -- (1,1);
\draw (2,0) -- (2,3) -- (1,3) -- (1,0);
\draw (2,2) -- (1,2);
\draw (5,0) -- (5,1) -- (6,1) -- (6,0) -- (5,0);
\draw (2,2) -- (3,2) -- (3,1);
\draw (3,0) -- (3,1);
\fill[gray] (0,0) rectangle (1,1);
\fill[gray] (1,0) rectangle (2,1);
\fill[gray] (1,1) rectangle (2,2);
\fill[gray] (2,1) rectangle (3,2);

\draw[dashed] (3,-0.5) -- (3,3.5);

\node[fill opacity = 1] at (.5,.5) {\small $ 0,1$};
\node[fill opacity = 1] at (1.5,.5) {\small $ 2,5$};
\node[fill opacity = 1] at (2.5,.5) {\small $ 1,4$};
\node[fill opacity = 1] at (2.5, 1.5) {\small $0,2$};
\node[fill opacity = 1] at (1.5,1.5) {\small $ 1,3$};
\node[fill opacity = 1] at (1.5,2.5) {\small $ 0,1$};
\node[fill opacity = 1] at (3.5,.5) {\small $ 0,2$};
\node[fill opacity = 1] at (5.5,.5) {\small $ 0,2$};
\end{tikzpicture}

\end{center}

Again cancelling boxes with the same arms/legs in gray, we find,
\begin{align*} \frac{\Estar{\lambdatildegamma}\overline{\lambdatildegamma}}{\Estar{\mutildeeta}\overline{\mutildeeta}} &= \frac{(qt^{-1})^1}{(q^2t^{-1})^2} \frac{(1-q^{-1}) (1-q^{-1}t^{-3})(1-q^{-1}t^{-3})}{(1-q^{-1}t^{-1})(1-q^{-1}t^{-2})(1-q^{-1}t^{-2})(1-q^{-2}t^{-4})}\\
&= q^{-3} \left( \frac{1-q}{1-qt}\right) \left( \frac{1-qt^3}{1-qt^2}\right) \left( \frac{1-qt^3}{1-qt^2} \right) \frac{1}{1-q^{-2}t^{-4}}\end{align*}

Then we compute $p_2$ and $p_{I_1}$,
\begin{align*}
    p_2 &= \frac{1-t}{1-t} (q^2t^{-1} - t^{-5}) \left( \frac{q^2t^{-1} - qt^{-3}}{q^2t^{-1} - qt^{-4}} \right) \left( \frac{q^2t^{-1}-q^3t}{q^2t^{-1} - q^3}\right)\\
    &= q^2t^{-1} (1-q^{-2}t^{-4})\cdot t \cdot \left( \frac{1-qt^2}{1-qt^3} \right) \left( \frac{1-qt^2}{1-qt} \right)\\
    p_{I_1} &= \left(\frac{(t-1)qt^{-1}}{qt^{-1} - qt^{-2}} \right) \left(\frac{(t-1) qt^{-2}}{qt^{-2}-qt^{-3}} \right) \left( \frac{ qt^{-2}-t^{-4}}{qt^{-2} - t^{-5}}\right)\\
    &= t^3 \frac{1-qt^2}{1-qt^3}
\end{align*}

Finally, we find $f_{\lambdatilde, \lambdagamma} / f_{\mutilde, \mueta} = \frac{(1-q^2t^2)(1-qt)}{(1-q^2t^3)(1-q)}$. Combining all these terms as well as $\overline{\lambdatildegamma}_{n-k} = qt^{-3}$ as per \cref{Jpieri}, and cancelling as far as possible, we obtain:

\[ \mathcal{A}^{(3,1,1 \, \mid \, 1,0,1)}_{(3,2,1\, \mid \, 1,0,1)} = \frac{(1-q^2t^2)(1-t^2)(1-qt^4)}{(1-q^2t^3)(1-t)(1-qt^3)(1-qt^3)}\]

The computation using \cref{simplifiedpieri} is substantially easier. Since $\lambdatilde_{n-k} = 1$, the product over $d_{(\lambdatilde_{n-k}+1)}(\lambdatilde) = d_2(\lambdatilde)$ only has one box, and
\[ \prod_{\square \in d_{(\lambdatilde_{n-k}+1)}(\lambda^-)}\frac{t-q^{\ell_{\lambdamingamma}(\square)+1}t^{a_{\lambdamingamma}(\square)+1}}{1-q^{\ell_{\lambdamingamma}(\square)+1}t^{a_{\lambdamingamma}(\square)+1}} =  \frac{t-q^2t^3}{1-q^2t^3} = t \frac{1-q^2t^2}{1-q^2t^3}. \]
Next, $\jtwo$ also only has a single box, namely the box $(2,1)$ in $\lambdamingamma$, and
\[ \jtwo =\prod\limits_{\square \in \group{2}_j}\frac{1-q^{\leg{\lambdamingamma}}t^{\armone{\lambdamingamma}+1} }{ 1-q^{\leg{\lambdamingamma}+1}t^{\armone{\lambdamingamma}+1+m_{j-1}(\eta)}} =  \frac{1-t^2}{1-qt^3}.\]
That leaves $p'_{I_1}$, which we compute:
\begin{align*}
    p'_{I_1} &= \left(\frac{(t-1)qt^{-1}}{qt^{-1}-qt^{-2}} \right) \left( \frac{(t-1)qt^{-2}}{qt^{-2}-qt^{-3}}\right) \left( \frac{qt^{-1}-t^{-5}}{qt^{-2}-t^{-5}}\right)\\
    &=t^2 \frac{1-qt^4}{1-qt^3}
\end{align*}
Now, including again $\overline{\lambdatilde}_{n-k} = qt^{-3}$, the extra $\frac{1}{1-t}$, and $q^{-\mutilde_h + 1} = q^{-1}$, we use $\cref{simplifiedpieri}$,
\[ \mathcal{A}^{(3,1,1 \, \mid \, 1,0,1)}_{(3,2,1\, \mid \, 1,0,1)} = \frac{(1-qt^4)(1-q^2t^2)(1-t^2)}{(1-qt^3)(1-q^2t^3)(1-qt^3)(1-t)}.\]

\end{ex}

	\bibliography{biblio}
	\bibliographystyle{abbrv}

\appendix

\section{Proof of \cref{decreasingT}} 
\cref{decreasingT}: If $s_i(\mu)<\mu$, and $u=(i,\mu_{i}+1)$, then
\[ T_iE_\mu = \frac{q^{\ell(u)+1}t^{a(u)}(1-t)}{1-q^{\ell(u)+1}t^{a(u)}}E_{\mu} - \frac{(t-q^{\ell(u)+1}t^{a(u)})(1-q^{\ell(u)+1}t^{a(u)+1})}{(1-q^{\ell(u)+1}t^{a(u)})^2} E_{s_i(\mu)},\]
where all arms and legs are in terms of the diagram of $s_i(\mu)$.
\begin{proof}
Suppose $\mu_i < \mu_{i+1}$, so  $s_i(\mu)<\mu$. Using \cref{quadratic}, note that $T_i^{-1} = \frac{1}{t}(T_i+1-t)$. Then since $s_i(\mu)<s_i(s_i(\mu))$, we can apply \cref{E:T-on-E-up} with $s_i(\mu)$ to obtain,

\[ T_i E_{s_i(\mu)} = E_\mu + \frac{t-1}{1-q^{\ell(u)+1}t^{a(u)}} E_{s_i(\mu)},\]
where the arms and legs are with respect to the diagram of $s_i(\mu)$. Multiplying both sides on the left by $T_i^{-1}$ and simplifying, we get,
\begin{align*}
E_{s_i(\mu)} &= \frac{1}{t}(T_i+1-t)(E_\mu + \frac{t-1}{1-q^{\ell(u)+1}t^{a(u)}}E_{s_i(\mu)})\\
tE_{s_i(\mu)} &= T_i E_\mu + (1-t) E_\mu + \frac{t-1}{1-q^{\ell(u)+1}t^{a(u)}} \left(T_i E_{s_i(\mu)} + (1-t) E_{s_i(\mu)} \right)\\
tE_{s_i(\mu)} &= T_i E_\mu + (1-t) E_\mu + \frac{t-1}{1-q^{\ell(u)+1}t^{a(u)}} \left( E_\mu + \frac{t-1}{q^{\ell(u)+1}t^{a(u)}} E_{s_i(\mu)} + (1-t) E_{s_i(\mu)} \right)\\
T_i E_\mu &= \left(t-1+\frac{1-t}{1-q^{\ell(u)+1}t^{a(u)}} \right) E_\mu + \left(t-\frac{(t-1)^2}{1-q^{\ell(u)+1}t^{a(u)})^2} - \frac{(t-1)(1-t)}{1-q^{\ell(u)+1}t^{a(u)}} \right) E_{s_i(\mu)}\\
T_iE_\mu &= \frac{q^{\ell(u)+1}t^{a(u)}(1-t)}{1-q^{\ell(u)+1}t^{a(u)}}E_{\mu} - \frac{(t-q^{\ell(u)+1}t^{a(u)})(1-q^{\ell(u)+1}t^{a(u)+1})}{(1-q^{\ell(u)+1}t^{a(u)})^2} E_{s_i(\mu)}\qedhere
\end{align*}
\end{proof}

\section{Proof of \cref{nonzero}}

\textbf{\cref{nonzero}} If $I_1$ is maximal with respect to $\gammalambdatilde$, then $\pIoneevaluated \neq 0$.

\begin{proof}
We will prove the contrapositive, so assume $p_{I_1}\left( \overline{\etamutilde}\right) = 0$. This implies either \begin{enumerate} \item $\overline{\eta}_{t_u} = t \overline{\eta}_j \text{ for some } u\in [1,r] \text{ and } j\in [t_u+1,t_{u+1}-1]$., \text{ or } \item $ q^{-1} \overline{\mutilde}_{m+1} = t \overline{\eta}_j \text{ for some } j \in [1,t_1-1]$
 \end{enumerate}
 In the first case, if $\overline{\eta}_{t_u} = t\overline{\eta}_j$, the powers of $q$ being equal means columns $t_u$ and $j$ have the same height, so $\eta_{t_u} = \eta_j$. Let $I_1' := I_1 \cup \{j\}$, and let $\dI'$ be the action associated with $I_1'$. We will show that $d_I\gammanu = d_I'\gammanu$. In that context, we have $\gamma_j = \gamma_{t_{u+1}}$. The only positions affected by the addition of $j$ to $I_1$ are the positions $j$ and $t_1$. And in these positions,
 \[ (\dI \gammanu)_{t_u} = \gammanu_{t_{u+1}} \, \text{ and }\,  (\dI' \gammanu)_{t_u} = \gammanu_j\]
 \[ (\dI \gammanu)_{j} = \gammanu_j  \, \text{ and } \, (\dI' \gammanu)_{j} = \gammanu_{t_{u+1}}.\]
 All entries of $d_I\gammanu$ and $d_I'\gammanu$ agree, so $I_1$ is not maximal.
 
 In the second case, since $ q^{-1} \overline{\mutilde}_{m+1} = t \overline{\eta}_j \text{ for some } j \in [1,t_1-1]$, again using column height, we have $\eta_j = \mutilde_{m+1}-1$. In terms of $\gammanu$, because $\mutilde_{m+1} = \mutilde_{v_s+1}$, this implies $\gamma_j = \gamma_{t_1}$ after applying the definition of $\dI$. By the same argument as in the first case, $d_I\gammanu$ and $d_I'\gammanu$ are identical, hence $I_1$ is not maximal with respect to $\gammanu$.
 \end{proof}

\section{Proof of \cref{nonsymmetricformula}}

\subsection{Setup}
We return to Baratta conventions for the computation. We are therefore looking to find coefficients $\DBaratta$ for the expansion,
\begin{equation} z_j \PB{\gammalambdamin} =\sum_{\substack{|\etamu| = |\lambdagamma| + 1 \\ \mu_1 \geq \, \dotsm \, \geq \mu_{n-k}}}  \DBaratta \cdot \PB{\etamumin}, \end{equation}
where $1\leq j \leq k$. Then passing to the interpolation polynomial computation as before,
\begin{align*}
    z_j \PB{\gammalambdamin} &= \Psi^{-1} \left(\widetilde{Z_j}  \Xi_j^{-1} e^+ \Estar{\gammalambdamin} \right)\\
    &=\Psi^{-1} \left( \sum_{\nu \sim \lambda} \fnu H_j\dotsm H_{n-1} \Phi H_1 \dotsm H_{j-1} \Xi^{-1}_{j} \Estar{\gammanu}\right).
\end{align*}
Taking $\Psi$ of both sides, using \cref{monomialexpansion} for the left side, and expanding $\PB{\etamumin}$ into the nonsymmetric Macdonald basis, this becomes,
\begin{equation}\label{Pre-evaluation nonsymmetric} \sum_{\etamumin}  \, \sum_{\mu' \sim \mu} \DBaratta \cdot f_{\mu',\etamumin} \cdot \Estar{\etamuprime} = \sum_{\nu \sim \lambda} \fnu H_j\dotsm H_{n-1} \Phi H_1 \dotsm H_{j-1} \Xi^{-1}_{j} \Estar{\gammanu}. \end{equation}
\subsection{Evaluation}
In spite of some of the $H_i$ operators being shuffled, we will still evaluate at the same $\etamutilde$ used in \cref{Piericomputation}. With the vanishing property of the interpolation polynomials, this reduces the left side of \cref{Pre-evaluation nonsymmetric} to
\[
    \DBaratta \cdot f_{\mutilde,\etamumin} \cdot \Estar{\etamutilde}\left( \overline{\etamutilde}\right),\]
    and the right side becomes
\[\sum_{\nu \sim \lambda} \fnu \cdot \left( \overline{\gammanu}_j\right) \cdot H_j \dotsm H_{n-1} \Phi H_1 \dotsm H_{j-1}  \Estar{\gammanu} \left( \overline{\etamutilde}\right).
\]
Prior to simplifying the product of operators and sum over $\nu$, the formula for $\DBaratta$ is then,
\begin{equation}\label{prevanishing}
    \DBaratta = \sum_{\nu \sim \lambda} \frac{\fnu}{f_{\mutilde,\etamumin}} \cdot \left( \overline{\gammanu}_j\right) \cdot \frac{H_j \dotsm H_{n-1} \Phi H_1 \dotsm H_{j-1}  \Estar{\gammanu} \left( \overline{\etamutilde}\right)}{\Estar{\etamutilde}{\left(\overline{\etamutilde}\right)}}
\end{equation}
The question that remains is, for which $\nu$ and specific actions of each $H_i$ does the term in this sum not vanish?
\subsection{Nonvanishing Polynomials}
The goal in this section is to identify the terms in the expansion,
\[ H_j \dotsm H_{n-1} \Phi H_1 \dotsm H_{j-1} \Estar{\gammanu}(z),\]
which do not vanish when the function is evaluated at $\overline{\gammanu}$. Begin by defining three indexing sets,
\[ I'_1 \subseteq [1,j-1], \quad I'_2 \subseteq [k+1,n-1], \quad I'_3 \subseteq [j,k]. \]
Also, label the sets as follows:
\[ I'_1 = \{t_1, \dotsc, t_r\}, \quad I'_2 = \{v_1, \dotsc, v_s\}, \quad I'_3 = \{y_1, \dotsc, y_c\}.\]
Note that like before, it is possible for some of these sets to be empty. Expanding out the product,
\begin{align*} &  [a(z_j,z_{j+1})+b(z_j,z_{j+1}) s_j] \dotsm [a(z_{n-1},z_n)+b(z_{n-1},z_n) s_{n-1}]\,    (z_n-t^{-n+1})\Delta \, \\& \hspace*{1cm} \cdot [a(z_1, z_2)+b(z_1,z_2) s_1] \dotsm [a(z_{j-1},z_{j}) + b(z_{j-1},z_{j})s_{j-1}] \, \Estar{\gammanu}(z),
\end{align*}
term by term results in some choice of either $a(z_i,z_{i+1})$ or $b(z_i,z_{i+1}) s_i$ for each $1\leq i \leq n-1$. Then the sets $I'_1$, $I'_2$, and $I'_3$ are those indices for which we choose $a(z_i,z_{i+1})$. Thus a single term in the expansion will be of the form,

\[ \pstarone \cdot \pstartwo \cdot \pstarthree \cdot \Estar{\gammanu}\left(I'_3(I'_2(I'_1(z))) \right).\]
We label these $p^*$ to distinguish from $p_{I'_1}$ and $p_{2}$ already used, as we will be comparing these new functions to $p_{I_1}$ and $p_2$ after further computation. Also, note once again the use of $\pstartwo$ rather than $p^*_{I'_2}$, as we will see there is an explicit form for $I'_2$. Rather than identify how each set permutes $z$ separately, we combine it into a single action:

\[ I'_3(I'_2(I'_1(z)))_\ell := \left\{\begin{tabular}{l l} $ \left.\begin{array}{l l} z_\ell  &\text{ if } \ell\notin I'_1 \text{ and } 1\leq \ell \leq j-1 \\
z_{t_{\ell-1}} \hspace*{.5cm} &\text{ if } \ell=t_i \text{ and } i \neq 1\\
q^{-1} z_{v_s+1} &\text{ if } \ell=t_1 \\\end{array} \hspace{1cm} \right\} \, \ell \leq j$ \\
\underline{\hspace{8.5cm}}\\[2mm] $\left.\begin{array}{l l}
z_{\ell} \hspace*{1.15cm} &\text{ if } k+m+2 \leq \ell\\
z_{\ell-1} &\text{ if } k+2 \leq \ell \leq k+m+1
\hspace*{1.35cm}\, \end{array} \right\} \, k+2 \leq \ell $
\\
\underline{\hspace{8.5cm}}\\[2mm]
$\left.\begin{array}{l l} z_\ell &\text{ if } \ell -1\notin I'_3 \text{ and } j+1 \leq \ell \leq k\\
z_{y_{i-1}+1} \hspace*{0.35cm}&\text{ if } \ell=y_i+1 \text{ and } i\geq 2\\
z_{y_c+1} &\text{ if } \ell=k+1
\hspace*{3.75cm} \end{array} \right\} \, j+1 \leq \ell \leq k+1  $
\end{tabular}  \right.\]

This is written with the assumption that $I'_2 = [k+1,k+m]$,  where $m$ is the number of columns of newly-increased height in $\mu$. This comes from the exact same argument as in the elementary symmetric polynomial multiplication, as $I_2$ is the same there, and we are evaluating at the same $\etamutilde$. In cyclic notation, the action $I'_3(I'_2(I'_1(z)))$ almost acts like the cycle,
\[(k+m+1,k+m,\dotsc, k+1,y_c+1, y_{c-1}+1, \dotsc, y_1+1, j, t_r, t_{r-1}, \dotsc, t_1),\]
except that $z_{t_1}$ is sent to $q^{-1}z_{k+m+1}$. If either of $I'_1$ or $I'_3$ is empty, the relevant entries are simply omitted from the list. Also, some of the terms may actually be the same, e.g., it is possible that $k\in I'_3$ so that $k+1=y_c+1$, so one of those terms would be omitted from the cycle in that case.

We would like to find an action $\dtwo$ for which
\[ \dtwo \gammanu=(\eta \mid \mu) \quad \text{ if and only if } \quad  I'_3 \left(I'_2\left(I'_1\left(\overline{(\eta \mid \mu)}\right) \right)  \right)= \overline{\gammanu}.\]

This action is in some sense the one that `undoes' the permutation above. The following is the action which achieves this aim:
\[ (\dtwo \gammanu)_\ell := \left\{\begin{tabular}{l l} $ \left.\begin{array}{l l} \gammanu_\ell  &\text{ if } 1\leq \ell\leq j-1 \text{ and } \ell\notin I'_1 \\
\gammanu_{t_{i+1}} &\text{ if } \ell=t_i \text{ and } i\neq r\\
\gammanu_{j} \hspace*{1.1cm} &\text{ if } \ell=t_r\\
\gammanu_{y_1+1} &\text{ if } \ell=j
\end{array} \hspace{.65cm} \right\} \, \ell \leq j$ \\
\underline{\hspace{9cm}}\\[2mm]
$\left.\begin{array}{l l} 
\gammanu_\ell &\text{ if } k+m+2 \leq \ell \\
\gammanu_{\ell+1} &\text{ if } k+1 \leq \ell \leq k+m \text{ and } \\
(\gammanu_{t_1}) + 1&\text{ if } \ell=k+m+1 
\hspace*{2.5cm} \end{array} \right\} \, k+1 \leq \ell $\\
\underline{\hspace{9cm}}\\[2mm] $\left.\begin{array}{l l} \gammanu_{\ell} &\text{ if } j+1 \leq \ell \leq k \text{ and } \ell-1 \notin I'_3\\
\gammanu_{y_{i+1}+1} \hspace*{.35cm}&\text{ if } \ell=y_i+1 \text{ and } 1 \leq i < c\\
\gammanu_{k+1}&\text{ if } \ell=y_c+1 \hspace*{3.25cm}\\ \end{array} \right\} \, j+1 \leq \ell \leq k $
 \end{tabular}  \right.\]
 
We are looking to evaluate $\Estar{\gammanu}\left(I'_3\left(I'_2\left(I'_1\left(\overline{\etamutilde}\right)\right)\right)\right)$, so we must identify the possible $I'_1$, $I'_2$, and $I'_3$, as well as $\nu$, such that
\[ \dtwo \gammanu =\etamutilde.\] Therefore we find that,
\[ (\nu_2, \dotsc, \nu_{n-k} )= (\mutilde_2,\dotsc, \mutilde_{n-k}).\]
Using the action $\dtwo$, we find that $\nu_1 = \etamutilde_{y_c+1}.$ We give this specially-chosen $\nu$ the name $\nutilde$, so
\[ \nutilde = (\eta_{y_c+1}, \mutilde_2, \dotsc, \mutilde_{n-k}).\]
It can be checked using exactly the same arguments as in \cref{ElementaryMultiplication} that $\nutilde$ is the only permutation of $\lambda$ which has nonzero contribution to \cref{prevanishing}, and that there are unique (maximal) $I'_1$, $I'_2$, and $I'_3$ for which $\dtwo \gammanutilde = \etamutilde$.

Here we note that this choice of indexing causes $\nutilde$ to be the same as $\lambdatilde$.
\subsection{Coefficient Calculations}
Just like in \cref{Piericomputation}, the remaining goal is to compute $\pstarone\left( \overline{\etamutilde}\right)$, $\pstartwo\left( \overline{\etamutilde}\right)$, and $\pstarthree\left( \overline{\etamutilde}\right)$, and our approach here will be the same as well. We start with $\pstartwo\left( \overline{\etamutilde}\right)$, which is the simplest.

The product of operators which contribute to $\pstartwo$ is $H_{k+1} \dotsm H_{n-1}(z_n-t^{-n+1})$. The only difference between this and the formula we found for $p_2$ is that rather than the sum $\displaystyle{\sum_{j={k+1}}^{k+m+1} H_j \dotsm H_{k+m}}$ which acted as $\displaystyle{\sum_{i=0}^m t^i}$, we only have the longest product, $H_{k+1} \dotsm H_{k+m}$, which acts as $t^m$. The rest is exactly identical, and so we obtain,
\[\pstartwo = t^m \cdot (\overline{\mutilde}_{m+1} - t^{-n+1}) \cdot \prod_{j=m+2}^{n-k} \frac{\overline{\mutilde}_{m+1} - t \cdot \overline{\mutilde}_j}{\overline{\mutilde}_{m+1} - \overline{\mutilde}_j} ,\]
or equivalently,
\[\pstartwo = t^m \cdot \frac{1-t}{1-t^{m+1}} \cdot p_2.\]

Next, we deal with $\pstarthree$. 
We are working with the following product:
\[ H_{j}\dotsm H_k = [a(z_j,z_{j+1})+b(z_j,z_{j+1}) s_j] \dotsm [a(z_{k},z_{k+1})+b(z_{k},z_{k+1}) s_{k}] \]
For the choice of $I'_3 = \{y_1, \dotsc, y_c \} \subseteq [j,k]$, consider the contribution from some $H_{y_i}$, which acts as $a(z_{y_i},z_{y_i+1})$. The relevant part of the full $H_j\dots H_k$ which modifies $a(z_{y_i},z_{y_i+1})$ is:
\[ s_{y_{i-1}+1} \dotsm s_{y_i-1} a(z_{y_i},z_{y_i+1}) = a(z_{y_{i-1}+1},z_{y_i+1}) s_{y_{i-1}+1} \dotsm s_{y_i-1},\]
if $i>1$, and if $i=1$, the term $y_0$ should be treated as $j-1$. Similarly, for some $\ell \in [j,k]$ for which $y_i < \ell < y_{i+1}$, we find $H_\ell$ acts as $b(z_\ell,z_{\ell+1})$, and the relevant portion of the full product is:
\[ s_{y_i+1}\dotsm s_{\ell-1} b(z_\ell,z_{\ell+1}) s_\ell = b(z_{y_i+1},z_{\ell+1}) s_{y_i+1}\dotsm s_\ell.\]
If $\ell<y_1$, then again, consider $y_0 < \ell < y_1$ with $y_0=j-1$. Additionally, if $y_c \neq k$, then the remaining operators on the right, $H_{y_c+1} \dotsm H_{k}$, act as
\[\left(\prod_{u=y_c+2}^{k+1} b(z_{y_c+1},z_u)\right) s_{y_c+1} \dotsm s_k. \]
Taking all the contributions from $H_j\dotsm H_k(z)$ together, we obtain:
\[ \pstarthree(z) = \prod_{i=1}^c a\left(z_{y_{i-1}+1},z_{y_i+1}\right) \cdot  \prod_{u=y_c+2}^{k+1} b(z_{y_c+1},z_u) \cdot  \prod_{i=1}^c \prod_{u=y_{i-1}+2}^{y_i} b\left(z_{y_{i-1}+1},z_u \right).\]

The computation for $\pstarone$ is similar to $p_{I_1}$. We give the formula without proof, as the process is like what we did for $\pstarthree$, and again refer to \cite{Ba} where the derivation is almost identical:
\[ \pstarone(z) = a(q^{-1} z_{k+m+1},z_{t_1}) \cdot \prod_{i=1}^{r-1} a(z_{t_i},z_{t_{i+1}}) \cdot \prod_{u=1}^{t_1-1} b(q^{-1} z_{k+m+1},z_u) \cdot \prod_{i=1}^r \prod_{u=t_i+1}^{t_{i+1}-1} b(z_{t_i},z_u),\]
where $t_{r+1}:=j+1$. To unify the formulas of $\pstarone$ and $\pstarthree$ and compare them to $p_{I_1}$, we use the following indexing:
\[ I_{13} := \{(ty)_1, \dotsc, (ty)_{c+r}\} \quad \text{ such that } (ty)_i = \begin{cases} t_i & \text{ if } 1\leq i \leq r\\
y_{i-r}+1 &\text{ if } r+1 \leq i \leq r+c\end{cases}.\]
Expressing $\pstarthree$ in these terms and rearranging the products turns this into,
\[a(z_{j},z_{(ty)_{r+1}})\cdot \prod_{i=r+1}^{r+c-1} a\left(z_{(ty)_{i}},z_{(ty)_{i+1}}\right)  \cdot \prod_{u=j+1}^{(ty)_{r+1}-1} b(z_j,z_u) \cdot  \prod_{i=r+1}^{r+c}\, \prod_{u=(ty)_{i}+1}^{(ty)_{i+1}-1} b\left(z_{(ty)_i},z_u \right),\]
with $(ty)_{r+c+1}=k+2$.

Now the product $\pstarone \cdot \pstarthree$ can be simplified to,
\begin{align*} \pstarone(z) \cdot \pstarthree(z)  = \,&a(q^{-1}z_{k+m+1},z_{t_1})\, \cdot \,  \frac{a(z_j,z_{(ty)_{r+1}})}{a(z_{(ty)_r},z_{(ty)_{r+1}})} \prod_{i=1}^{r+c-1} a(z_{(ty)_i},z_{(ty)_{i+1}})  \\ & \quad \cdot \prod_{u=1}^{t_1-1} b(q^{-1} z_{k+m+1},z_u) \cdot \frac{\prod\limits_{u=j+1}^{(ty)_{r+1}-1} b(z_j,z_u)}{\prod\limits_{u=(ty)_r+1}^{(ty)_{r+1}-1} b(z_{(ty)_r},z_u)} \cdot  \prod_{i=1}^{r+c}\, \prod_{u=(ty)_{i}+1}^{(ty)_{i+1}-1} b\left(z_{(ty)_i},z_u \right) \end{align*}
Written this way, it is easier to compare to $p_{I_1}$ where $I_1 = \{ t_1, \dotsc, t_r, y_1+1,\dotsc,y_c+1 \}$:
\[ \pstarone(z) \cdot \pstarthree(z) = p_{I_1}(z) \cdot\left( \frac{a(z_j,z_{(ty)_{r+1}})}{a(z_{(ty)_r},z_{(ty)_{r+1}})} \right)\cdot \left( \frac{\prod\limits_{u=j+1}^{(ty)_{r+1}-1} b(z_j,z_u)}{\prod\limits_{u=(ty)_r+1}^{(ty)_{r+1}-1} b(z_{(ty)_r},z_u)}\right). \]
Expanding out the $a$'s and $b$'s,
\[ \pstarone(z) \cdot \pstarthree(z) = p_{I_1}(z) \cdot \frac{z_j(z_{(ty)_r}-z_{(ty)_{r+1}})}{z_{(ty)_r} (z_j-z_{(ty)_{r+1}})} \cdot \prod_{u=j+1}^{(ty)_{r+1}-1} \frac{z_j-tz_u}{z_j-z_u} \cdot \prod_{u=(ty)_r+1}^{(ty)_{r+1}-1} \frac{z_{(ty)_r}-z_u}{z_{(ty)_r}-tz_u}.\]
And finally, evaluating $\pstarone\left( \overline{\etamutilde} \right) \cdot \pstarthree\left( \overline{\etamutilde} \right)$, we find,
\[\pstarone\left( \overline{\etamutilde} \right) \cdot \pstarthree\left( \overline{\etamutilde} \right) = p_{I_1}\left( \overline{\etamutilde} \right) \cdot \frac{\overline{\eta}_j(\overline{\eta}_{t_r} - \overline{\eta}_{y_1})}{\overline{\eta}_{t_r}(\overline{\eta}_j-\overline{\eta}_{y_1})}\cdot \prod_{u=j+1}^{y_1-1} \frac{\overline{\eta}_j-t\overline{\eta}_u}{\overline{\eta}_j-\overline{\eta}_u} \cdot \prod_{u=t_r+1}^{y_1-1} \frac{\overline{\eta}_{t_r}-\overline{\eta}_u}{\overline{\eta}_{t_r}-t\overline{\eta}_u}. \]

It is worth noting that if $t_r=j$, then this simplifies to just $p_{I_1}\left(\overline{\etamutilde} \right)$. Combining all these simplifications with \cref{prevanishing}, using the vanishing property of the interpolation polynomials, and comparing with the result from \cref{Pieriformula}, we obtain the expression:
\begin{align*}\label{nonsymmetricpieri}     \DBaratta &=  \frac{f_{\lambdatilde,\gammalambdamin}}{f_{\mutilde,\etamumin}} \cdot \left( \overline{\gammanu}_j\right) \cdot \pstarone \cdot \pstartwo \cdot \pstarthree \cdot \frac{  \Estar{\gammalambdatilde} \left( \overline{\gammalambdatilde}\right)}{\Estar{\etamutilde}{\left(\overline{\etamutilde}\right)}}\\
&= C_{\etamumin}^{\gammalambdamin} \cdot \frac{ \overline{\gammanu}_j}{ \overline{\gammanu}_{k+1}} \cdot t^m \cdot \frac{1-t}{1-t^{m+1}} \cdot  \frac{\overline{\eta}_j(\overline{\eta}_{t_r} - \overline{\eta}_{y_1})}{\overline{\eta}_{t_r}(\overline{\eta}_j-\overline{\eta}_{y_1})}\cdot \prod_{u=j+1}^{y_1-1} \frac{\overline{\eta}_j-t\overline{\eta}_u}{\overline{\eta}_j-\overline{\eta}_u} \cdot \prod_{u=t_r+1}^{y_1-1} \frac{\overline{\eta}_{t_r}-\overline{\eta}_u}{\overline{\eta}_{t_r}-t\overline{\eta}_u}.\end{align*}

\section{Proof of \cref{prop2conversion}}\label{SecondSimplification}
\raggedright \textbf{\cref{prop2conversion}}
\begin{align*} \begin{split}
\jfour \cdot \jfive \cdot  \Estarfour &\cdot \Estarfive \cdot \prod_{j=t_r+1}^k \frac{q^{-1} \overline{\mutilde_h} -t\overline{\eta}_j}{q^{-1} \overline{\mutilde_h} -\overline{\eta}_j} = \prod_{j=t_r+1}^k \frac{(q^{-1}t) \overline{\mutilde_h} -\overline{\eta}_j}{(q^{-1}) \overline{\mutilde_h} -\overline{\eta}_j}  \\ &\cdot \prod\limits_{\substack{\square \in \group{5}'_E \\ \square = (h,i) \\ i\neq \mutilde_h}} \frac{1-q^{-\leg{\mutildeeta}}t^{-\armone{\mutildeeta}}}{1-q^{-(\leg{\mutildeeta}+1)}t^{-\armtwo{\mutildeeta}}} \cdot \left(\frac{1}{1-t}\right) \cdot \left(\frac{1}{1-q^{-1}t^{-a_\mutildeeta(h,\mutilde_h)}}\right)\end{split}\end{align*}
\begin{proof} 
We approach the proof of this proposition much in the same way as \cref{prop1conversion}. The major difference will be that the boxes contributing to $\jfive$ now have the modified arm lengths. This naturally breaks into two cases again, the first case cancelling very similarly to those in \cref{prop1conversion}, and the second one which leaves us with the extra diagram statistic terms on the right side of the equation.

Before getting into the cases, we point out that the final two fractions in the right side of the equation, $\displaystyle{\frac{1}{1-t}}$ and $\displaystyle{\frac{1}{1-q^{-1}t^{-a_\mutildeeta(h,\mutilde_h)}}}$, are the contributions of the newly added box $(h,\mutilde_h)$ to $\jfive$ and $\Estarfive$ respectively. These do not cancel with the eigenvalue rational functions like the terms from other boxes do.\\

\underline{\textbf{Case 1:}} Let $j>t_r$, and suppose that $\eta_j \geq \mutilde_h$. We will show that the term coming from $\jfour$ for the box $(j+n-k,\mutilde_h)$ cancels with $\displaystyle{\frac{q^{-1} \overline{\mutilde_h} -t\overline{\eta}_j}{q^{-1} \overline{\mutilde_h} -\overline{\eta}_j}}$, and the corresponding term coming from $\Estarfour$ replaces it with $\displaystyle{\frac{(q^{-1}t) \overline{\mutilde_h} -\overline{\eta}_j}{(q^{-1}) \overline{\mutilde_h} -\overline{\eta}_j}}$. The first observation is that $\eta_j = \mutilde_h + \leg{\mutildeeta}$. As we did before, we write $\lj$ and $\lh$ to mean $\ell'_\mutildeeta(j+n-k)$ and $\ell'_\mutildeeta(h)$ respectively. The following table shows column heights where $\lh$ and $\lj$ differ:

\begin{center}\begin{tabular}{|c|c|c|c|}
     \hline
     & Counted in $\lj$ and $\lh$ & Counted only in $\lh$ & Counted in neither   \\ \hline
     Left of both& $>\eta_j$ & $\mutilde_h+1, \dotsc, \eta_j$& $\leq \mutilde_h$ \\ \hline
     Between $\mutilde_h$ and $\eta_j$ & $>\eta_j$ & $\mutilde_h, \dotsc, \eta_j$& $<\mutilde_h$\\ \hline 
     Right of both & $\geq \eta_j$ & $\mutilde_h, \dotsc, \eta_j-1$ & $<\mutilde_h$\\ \hline 
\end{tabular}
\end{center}

Like before, $\lh$ also counts the column $\eta_j$, and $\armtwo{\mutildeeta}$ counts the column $\mutilde_h$. Superficially, there appear to be two differences between the boxes only counted in $\lh$ and $\armtwo{\mutildeeta}$. The first is that $\armtwo{\mutildeeta}$ counts boxes left of $\mutilde_h$ of height $\mutilde_h$, and $\lh$ does not, but by construction, no such columns exist. The second is that only $\armtwo{\mutildeeta}$ counts columns to the right of $\eta_j$ of height $\mutilde_h-1$, but such a column does not exist by maximality of $I_1$. This means we have $\lh = \lj+\armtwo{\mutildeeta}$. Comparing eigenvalues, we also get $\overline{\eta}_j = \overline{\mutilde}_h \cdot q^\leg{\mutildeeta} t^\armtwo{\mutildeeta}$, again with $\square = (j,\mutilde_h)$. The comparison leads to the following conversion:
\begin{align*}
    \frac{q^{-1} \overline{\mutilde_h} -t\overline{\eta}_j}{q^{-1} \overline{\mutilde_h} -\overline{\eta}_j} &= \frac{q^{-1} \overline{\mutilde_h} - \overline{\mutilde}_h \cdot q^\leg{\mutildeeta} t^{\armtwo{\mutildeeta}+1}}{q^{-1} \overline{\mutilde_h} -\overline{\mutilde}_h \cdot q^\leg{\mutildeeta} t^\armtwo{\mutildeeta}}\\
    &= \frac{1-q^{\leg{\mutildeeta}+1} t^{\armtwo{\mutildeeta}+1}}{1-q^{\leg{\mutildeeta}+1} t^\armtwo{\mutildeeta}}
\end{align*}

Then the power of $t$ in the denominator can be modified, using the fact that $a_\mutildeeta(j,\mutilde_h)$ counts one more column than $a_\lambdatildegamma(j,\mutilde_h)$, which is the column which wraps around and becomes $\mutilde_h$. As a result,
\[ \frac{q^{-1} \overline{\mutilde_h} -t\overline{\eta}_j}{q^{-1} \overline{\mutilde_h} -\overline{\eta}_j} = \frac{1-q^{\leg{\mutildeeta}+1} t^{\armtwo{\mutildeeta}+1}}{1-q^{\leg{\mutildeeta}+1} t^{\armtwo{\lambdatildegamma}+1}},\]
which cancels identically with the related term for this box in $\jfour$. And the rational function coming from the box in $\Estarfour$ can be converted back into an eigenvalue function,
\begin{align*}
    \frac{1-q^{-(\leg{\lambdatildegamma}+1)} t^{-\armtwo{\lambdatildegamma}}}{1-q^{-(\leg{\mutildeeta}+1)} t^{-\armtwo{\mutildeeta}}} &= \frac{1-q^{-(\leg{\mutildeeta}+1)} t^{-(\armtwo{\mutildeeta}-1)}}{1-q^{-(\leg{\mutildeeta}+1)} t^{-\armtwo{\mutildeeta}}}\\
    &= \frac{\overline{\eta}_j-\overline{\eta}_j\cdot (q^{-1}t)q^{-\leg{\mutildeeta}} t^{-\armtwo{\mutildeeta}}}{\overline{\eta}_j-\overline{\eta}_j\cdot (q^{-1})q^{-\leg{\mutildeeta}} t^{-\armtwo{\mutildeeta}}}\\
    &=\frac{\overline{\eta}_j- (q^{-1}t)\overline{\mutilde}_h}{\overline{\eta}_j- (q^{-1})\overline{\mutilde}_h}.
\end{align*}
In summary, this case takes care of:
\begin{enumerate}
    \item All terms in $\jfour$ and $\Estarfour$
    \item The rational functions $\displaystyle{ \frac{q^{-1} \overline{\mutilde_h} -t\overline{\eta}_j}{q^{-1} \overline{\mutilde_h} -\overline{\eta}_j}}$ on the left and $\displaystyle{\frac{\overline{\eta}_j- (q^{-1}t)\overline{\mutilde}_h}{\overline{\eta}_j- (q^{-1})\overline{\mutilde}_h}}$ on the right of the equation for all values $j$ such that $\eta_j\geq \mutilde_h$.\\
\end{enumerate}

\underline{\textbf{Case 2:}} Like case 1, the work in this case mirrors case 2 of the proof of \cref{prop1conversion}. So let $(h,s)$ be a box in the diagram $\mutildeeta$ with $s<\mutilde_h$. Let $\eta_{a_1},\dotsc,\eta_{a_{g_{s-1}}}$ be the columns of height $s-1$, with $t_r < a_1 < \dotsm < a_{g_{s-1}}$, so $g_{s-1} = \# \{ i > t_r \, \mid \, \eta_i = s-1\}$. Suppose $\eta_j \in \{\eta_{a_1}, \dotsc, \eta_{a_{g_{s-1}}}\}$. Let $\square = (h,\eta_j+1)$, or equivalently $\square = (h,s)$. Note that $\mutilde_h = \eta_j + \leg{\mutildeeta}+1$. Comparing $\lj$ and $\lh$, we get the following:

\begin{center}\begin{tabular}{|c|c|c|c|}
     \hline
     & Counted in $\lj$ and $\lh$ & Counted only in $\lj$ & Counted in neither   \\ \hline
     Left of both & $>\mutilde_h$ & $\eta_j+1, \dotsc, \mutilde_h $ & $\leq \eta_j$\\ \hline
     Between $\mutilde_h$ and $\eta_j$ & $\geq \mutilde_h$ & $\eta_j+1,\dotsc, \mutilde_h-1$ & $\leq \eta_j$\\ \hline 
     Right of both & $\geq \mutilde_h$ & $\eta_j, \dotsc, \mutilde_h-1$ & $<\eta_j$ \\ \hline 
\end{tabular}
\end{center}

Additionally, $\lj$ counts $\mutilde_h$. Unlike before, we would like to compare $\lj$ and $\lh$ using the modified arm, $\armone{\mutildeeta}$. The formula for this is,
\[\armone{\nu} := \# \{1\leq r < i \mid j \leq \nu_r \leq \nu_i \} + \# \{i<r \leq n \mid j \leq \nu_r < v_i \}\]
Like in the previous proof's case 2, this does not count columns to the right of $\eta_j$ of the same height $s-1$. Taking into account this and the additional column $\mutilde_h$ counted in $\lj$, we have the relation,
\[ \lj =\lh + (\armone{\mutildeeta}+1)+\# \{ j < i \leq k \, \mid \, \eta_i = \eta_j \}.\]

Now we use this information to find eigenvalue relations for each column of height $s-1$,
\begin{align*}
    \overline{\eta}_{a_{g_{s-1}}} &= \overline{\mutilde}_h \cdot q^{-(\leg{\mutildeeta}+1)}t^{-(\armone{\mutildeeta}+1)}\\
     \overline{\eta}_{a_{g_{s-1}-1}} &= \overline{\mutilde}_h \cdot q^{-(\leg{\mutildeeta}+1)}t^{-(\armone{\mutildeeta}+2)}\\
     &\vdots \\
      \overline{\eta}_{a_1} &= \overline{\mutilde}_h \cdot q^{-(\leg{\mutildeeta}+1)}t^{-(\armone{\mutildeeta}+1+g_{s-1}-1)}\\
\end{align*}

Next, we simplify the product of rational functions,
\begin{align*}
   &\frac{q^{-1}\overline{\mutilde}_h-t \overline{\eta}_{a_{g_{s-1}}}}{q^{-1}\overline{\mutilde}_h- \overline{\eta}_{a_{g_{s-1}}}} \dotsm \frac{q^{-1}\overline{\mutilde}_h-t \overline{\eta}_{a_1}}{q^{-1}\overline{\mutilde}_h- \overline{\eta}_{a_1}} \\
   &= \frac{1-q^{-\leg{\mutildeeta}}t^{-\armone{\mutildeeta}}}{1-q^{-\leg{\mutildeeta}}t^{-(\armone{\mutildeeta}+1)}}\cdot  \frac{1-q^{-\leg{\mutildeeta}}t^{-(\armone{\mutildeeta}+1)}}{1-q^{-\leg{\mutildeeta}}t^{-(\armone{\mutildeeta}+2)}} \dotsm \frac{1-q^{-\leg{\mutildeeta}}t^{-(\armone{\mutildeeta}+g_{s-1}-1)}}{1-q^{-\leg{\mutildeeta}}t^{-(\armone{\mutildeeta}+g_{s-1})}} \\
   &= \frac{1-q^{-\leg{\mutildeeta}}t^{-\armone{\mutildeeta}}}{1-q^{-\leg{\mutildeeta}}t^{-(\armone{\mutildeeta}+g_{s-1})}} 
\end{align*}

Unfortunately, unlike the other cases, this does not fully cancel with the term coming from $\Estarfive$. Recall that the contribution of that box to $\Estarfive$ is
\[ \frac{1-q^{-\leg{\mutildeeta}}t^{-(\armone{\mutildeeta}+g_{i-1})}}{1-q^{-(\leg{\mutildeeta}+1)}t^{-\armtwo{\mutildeeta}}}.\]
So the $\Estarfive$ numerator cancels with the rational function denominator, and we are left with
\[ \frac{1-q^{-\leg{\mutildeeta}}t^{-\armone{\mutildeeta}}}{1-q^{-(\leg{\mutildeeta}+1)}t^{-\armtwo{\mutildeeta}}}\]

We can do a similar thing with the corresponding rational function on the right side of the desired  equation, \cref{PI1conversion2},
\begin{align*}
    &\frac{(q^{-1}t)\overline{\mutilde}_h- \overline{\eta}_{a_{g_{s-1}}}}{q^{-1}\overline{\mutilde}_h- \overline{\eta}_{a_{g_{s-1}}}} \dotsm \frac{(q^{-1}t)\overline{\mutilde}_h- \overline{\eta}_{a_1}}{q^{-1}\overline{\mutilde}_h- \overline{\eta}_{a_1}} \\
    &=\frac{(q^{-1}t)\overline{\eta}_{a_{g_{s-1}}} \cdot q^{\leg{\mutildeeta}+1}t^{\armone{\mutildeeta}+1}- \overline{\eta}_{a_{g_{s-1}}}}{q^{-1}\overline{\eta}_{a_{g_{s-1}}} \cdot q^{\leg{\mutildeeta}+1}t^{\armone{\mutildeeta}+1}- \overline{\eta}_{a_{g_{s-1}}}} \dotsm \frac{(q^{-1}t)\overline{\eta}_{a_1} \cdot q^{\leg{\mutildeeta}+1}t^{\armone{\mutildeeta}+g_{s-1}}- \overline{\eta}_{a_1}}{q^{-1}\overline{\eta}_{a_1} \cdot q^{\leg{\mutildeeta}+1}t^{\armone{\mutildeeta}+g_{s-1}}- \overline{\eta}_{a_1}} \\
    &= \frac{1-q^\leg{\mutildeeta}t^{\armone{\mutildeeta}+1+g_{s-1}}}{1-q^\leg{\mutildeeta}t^{\armone{\mutildeeta}+1}}.
\end{align*}

Observe that this matches identically with the contribution of the box to $\jfive$. Then to summarize our last case, we have accounted for the following parts of the equation \cref{PI1conversion2}:
\begin{enumerate}
    \item $j_5$ and $\Estarfive$
    \item The rational functions $\displaystyle{ \frac{q^{-1} \overline{\mutilde_h} -t\overline{\eta}_j}{q^{-1} \overline{\mutilde_h} -\overline{\eta}_j}}$ on the left and $\displaystyle{\frac{\overline{\eta}_j- (q^{-1}t)\overline{\mutilde}_h}{\overline{\eta}_j- (q^{-1})\overline{\mutilde}_h}}$ on the right of the equation for all values $j$ such that $\eta_j < \mutilde_h$
    \item The extra product with diagram statistics on the right, $\prod\limits_{\substack{\square \in \group{5}'_E \\ \square = (h,i) \\ i\neq \mutilde_h}} \frac{1-q^{-\leg{\mutildeeta}}t^{-\armone{\mutildeeta}}}{1-q^{-(\leg{\mutildeeta}+1)}t^{-\armtwo{\mutildeeta}}}$.
\end{enumerate}
That takes care of the remainder of the pieces of \cref{PI1conversion2}. \qedhere
\end{proof}

\end{document}